\newtheorem{theorem}{Theorem}[section]
\newtheorem{corollary}[theorem]{Corollary}
\newtheorem{lemma}[theorem]{Lemma}
\newtheorem{proposition}[theorem]{Proposition}
\par\noindent{\bf Proposition \ref{res:hiper}.}\!\!
\par\noindent{\bf Theorem \ref{result43}.}\!\!
\par\noindent{\it Idea of the proof}.  
\hfill\linebreak[2]\hspace*{\fill}$\circlearrowleft$}
\par\noindent{\it Proof of Proposition }\ref{prop:stab:smc}.  
\hfill\linebreak[2]\hspace*{\fill}$\circlearrowleft$}
\newenvironment{proofTHM}%
	{\par\noindent{\it Proof of Theorem }\ref{thmfn}.  
	\nopagebreak\normalsize}%
	{\hfill\linebreak[2]\hspace*{\fill}$\circlearrowleft$}
\theoremstyle{definition}
       \newtheorem{definition}[theorem]{Definition}
	\newtheorem{claim}[theorem]{Claim}
       \newtheorem{remark}[theorem]{Remark}
       \newtheorem{parrafo}[theorem]{{\!}}  }
\numberwithin{equation}{theorem}
\newcommand{\nat}{\mathbb N}
\newcommand{\calo}{{\mathcal {O}}}
\DeclareMathOperator{\ord}{ord}
\DeclareMathOperator{\Ord}{Ord}
\DeclareMathOperator{\Sing}{Sing}
\DeclareMathOperator{\Spec}{Spec}
\DeclareMathOperator{\Hord}{H-ord}
\DeclareMathOperator{\In}{In}
\newcommand{\R}{{\mathcal R}}
\newcommand{\G}{{\mathcal G}}
\newcommand{\C}{{\mathcal C}}
\renewcommand{\L}{{\mathcal L}}
\newcommand{\N}{{\mathbb N}}
\newcommand{\m}{{\mathcal{M}}}
\renewcommand{\P}{\mathcal{P}}
\newcommand{\A}{\mathbb{A}}
\newcommand{\win}{\mathbf{w}\hbox{-}\mathbf{in}}
\newcommand{\id}[1]{\langle #1 \rangle}
\newcommand{\x}{{\mathbf{x}}}
\definecolor{darkpurple}{rgb}{0.28,0.24,0.55}
\definecolor{lightblue}{rgb}{0,0.75,1}
\title{On elimination of variables in the study of singularities in positive characteristic}
\author{Ang\'elica Benito}
\author{Orlando E. Villamayor U.}
\thanks{2000 {\em Mathematics subject classification. 14E15.}}
 \thanks{The authors are partially supported by MTM2009-07291.}
\date{\today}
\address{Dpto. Matem\'aticas,  Universidad
Aut\'onoma de Madrid and ICMAT-UAM \\
Ciudad Universitaria de Cantoblanco, 28049 Madrid, Spain}
\email[Ang\'elica Benito]{angelica.benito@uam.es}
\email[Orlando E. Villamayor U.]{villamayor@uam.es}
\keywords{Singularities. Differential operators. Rees algebras.}
\begin{document}

\maketitle

\begin{abstract} The objective of this paper is to discuss invariants of singularities of algebraic schemes over fields of positive characteristic, and to show 
how they yield the simplification of singularities.

We focus here on invariants which arise in an inductive manner, namely by successive elimination of variables. When applied to hypersurface singularities they lead us to a refinement of the notion of multiplicity. 

The main theorem proves that, under some numerical conditions 
expressed by these invariants, singularities can be simplified by blowups at centers prescribed by this refinement.

\end{abstract}

{\tableofcontents}

\section{{Introduction}}\label{intro}

\begin{parrafo}
This paper aims to the study of invariants of singularities 
over perfect fields. In the case of a hypersurface our results will provide a refinement of  the multiplicity. In this case the invariants 
were already treated in pioneering work of Cossart and Moh. However the scope of interest in this work goes beyond the hypersurface case: we present invariants for arbitrary schemes of finite type over a perfect field. The aim is to give numerical condition, by using these invariants, and to prove that if these conditions hold there is a simplification of the singularities.

 Let $X\subset V^{(d)}$ be a hypersurface embedded in a $d$-dimensional smooth scheme over a perfect field $k$. An upper semi-continuos function, say
$$mult:X\longrightarrow \mathbb{Z},$$
is defined by setting $mult(x)$ as the multiplicity of $X$ at $x$.

If $b\in \mathbb Z$ denotes the highest value achieved by the function, then the set of $b$-fold points, say $X(b)=\{x\in X\ |\ b=mult(x)\}$, is closed. We encompass this situation in a more general setting which leads us to the notion of \emph{pairs}:

\noindent {\em Pairs:} Set, as before, $V^{(d)}$ a $d$-dimensional smooth scheme over a perfect field $k$.  Fix a non-zero sheaf of ideals, say $J\subset\calo_{V^{(d)}}$, and a positive integer $b$. Here $(J,b)$ is called a \emph{pair}. It defines a closed subset in $V^{(d)}$, called the \emph{singular locus}, say
$$\Sing(J,b)=\{x\in V^{(d)}\ |\ \nu_x(J)\geq b\},$$
where $\nu_x(J)$ denotes the order of $J$ in $\calo_{V^{(d)},x}$. 
When $J=I(X)$ and $X$ is a hypersurface, then $\Sing(J,b)$ is the set of $b$-fold points of $X$.

Let $Y$ be a smooth irreducible subscheme included in $\Sing(J,b)$, and and let $V^{(d)}\overset{\pi_Y}{\longleftarrow} V^{(d)}_1$ denote  the monoidal transformation with center $Y$. Note that there is a factorization of the form
$$J\calo_{V^{(d)}_1}=I(H)^{b}J_1,$$
where $H=\pi_Y^{-1}(Y)$ denotes the exceptional hypersurface. This defines a new pair $(J_1,b)$, called the \emph{transform} of $(J,b)$. 
Similarly, an iteration of transformations, say
 \begin{equation}\label{eqin31}
\xymatrix@C=2pc@R=0pc{
(J,b) & (J_1,b) &  &(J_r,b)\\
V^{(d)} & V^{(d)}_1\ar[l]_{\pi_{Y}} &\dots  \ar[l]_{\pi_{Y_1}} &  V^{(d)}_r\ar[l]_{\pi_{Y_{r-1}}}  
}
\end{equation}
can be defined by setting $V^{(d)}_{i}\overset{\pi_{Y_i}}{\longleftarrow}V^{(d)}_{i+1}$ as the transformation with smooth centers $Y_i$ included in $\Sing(J_{i},b)$. Let $H_{i+1}$ denote the new exceptional hypersurface in $V^{(d)}_{i+1}$.

We shall always assume that such sequences are defined in such a way that the strict transforms of the $r$ exceptional hypersurfaces, say $\{H_1,\dots,H_r\}$, have normal crossings in $V^{(d)}_r$. 
A sequence of transformation, as above, is said to be a \emph{resolution} if, in addition, $\Sing(J_r,b)=\emptyset$.

In the case of hypersurfaces, a resolution of the pair $(I(X),b)$ defines an elimination of the $b$-fold points of $X$ by blowing-up centers included in the successive strict transforms of the hypersurface.

We are searching for functions defined on the class of pairs. Namely functions defined in some prescribed way along the closed sets defined by pairs.

\end{parrafo}

\begin{parrafo}{\bf The $\tau$-function on pairs}.

 An example of function on the class of pairs is that known as the $\tau$-function. For each pair $(J,b)$, over over $V^{(d)}$, a lower semi-continuous function
 $$\tau_{(J,b)}:\Sing(J,b)\longrightarrow\mathbb{Z}_{\geq0}$$
 is defined (or say, $(-1)\cdot \tau_{(J,b)}$ is upper semi-continuous). 
 The value at $x\in \Sing(J,b)$, say $\tau_{(J,b)}(x)$, is an integer that encodes fundamental information. For example:
$$\tau_{(J,b)}(x) \leq codim_x(\Sing(J,b)) \leq d,$$
where the intermediate term is the local codimension at $x$ of  $\Sing(J,b)$, and $d$ is dimension of $V^{(d)}$. So it is a lower bound of the local codimension of the closed set. Moreover:

In the case $\tau_{(J,b)}(x) = d$, the inequality says that $\Sing(J,b)=\{x\}$, and it is proved that a resolution of $(J,b)$ is achieved, locally, by blowing up the point $x$.

On the other hand, if  $\tau_{(J,b)}(x) =codim_x(\Sing(J,b))$, then, in a neighborhood of $x$, 
$\Sing(J,b)$ is smooth and a resolution of $(J,b)$ is achieved by blowing up $\Sing(J,b)$.

So from the point of view of singularities the value $\tau_{(J,b)}(x)$ is a powerful local invariant. We briefly indicate how it is defined. Let $\mathbb T_{V^{(d)},x}=\Spec(gr_M(\calo_{V^{(d)},x})$ denote the tangent space at $x\in V^{(d)}$, where $M$ denotes the maximal ideal of  $\calo_{V^{(d)},x}$. The class of $J$ in $M^b/M^{b+1}$, say
 $(J+ M^{b+1})/M^{b+1}$, spans an homogeneous ideal in $gr_M(\calo_{V^{(d)},x})$, and hence a \emph{cone}, say
$\C_{(J,b)}(x)\subset \mathbb T_{V^{(d)},x}$. 

The tangent space is viewed as a vector space. Given a cone, say $\C$, in a vector space $\mathbb V$, there is a largest subspace, say $S$, so that $S\subset \C \subset \mathbb V$, and $\C+v=\C$ for any $v\in S$ (i.e., the cone is invariant by the group of translations on $\mathbb V$
defined by vectors in $S$). This subspace is known as the subspace of vertices of the cone.

 Here we set  $\mathcal{L}_{(J,b)}(x)$ as the \emph{subspace of vertices} of $\C_{(J,b)}(x)$. Finally  $\tau_{(J,b)}(x)$ is  defined as the codimension of   $\mathcal{L}_{(J,b)}(x)$  in $\mathbb{T}_{V^{(d)},x}$. 
\end{parrafo}

\begin{parrafo}{\bf The order function, and lower dimensional H-functions.}

In practical terms, and for the sake of resolution, the invariant 
$\tau_{(J,b)}(x)$ indicates the number of variables that can be eliminated. This claim will be clarified below, but let us indicate some properties that support it. 

In the case of characteristic zero, given $x\in \Sing(J,b)$, and setting $e=\tau_{(J,b)}(x)$, then there is a smooth subscheme of dimension $d-e$, say $V^{(d-e)}\subset V^{(d)}$, together with a pair $(\overline{J},b')$, with the property 
that a resolution of $(\overline{J},b')$ defines a resolution of $(J,b)$. 

This construction can be done in a neighborhood of $x$. The advantage of this reformulation is that $\overline{J}$ is an ideal in $V^{(d-e)}$. The underlying idea is, of course, that the smaller the dimension of $V^{(d-e)}$ is, the easier it is to construct a resolution. 
Here $V^{(d-e)}$ is defined so that, locally, $\Sing(J,b)\subset V^{(d-e)}$, and moreover,
$\Sing(J,b)=\Sing(\overline{J},b')$.

We now introduce one of the main objects of interest in this work, the so called \emph{$r$-dimensional H-functions} 
$$ \Hord^{(r)}_{(J,b)} : \Sing(J,b)\longrightarrow \mathbb Q.$$
These functions are somehow subordinated to the previous $\tau$-function. In fact, if $e=\tau_{(J,b)}(x)$, then a function $\Hord^{(r)}_{(J,b)} $ is defined, at a suitable neighborhood of $x$, but only for $d-e\leq r \leq d$.

For the case $r=d$, the function $\Hord_{(J,b)} ^{(d)}$ is always defined. Set, $ \Hord_{(J,b)} ^{(d)}=\ord_{(J,b)} ^{(d)}$:
$$ \ord^{(d)}_{(J,b)} : \Sing(J,b)\longrightarrow \mathbb Q\quad \hbox{with}\quad\ord^{(d)}_{(J,b)} (y)=\frac{\nu_y(J)}{b},$$
for $y\in\Sing(J,b)$, here $\nu_y(J)$ is the order of $J$ in $\calo_{V^{(d)},y}$. Thus, the $d$-dimensional H-function is the classical order function of a pair, namely $\ord_{(J,b)}$. One checks easily that $\ord_{(J,b)} $ is upper semi-continuos. 

A property of these functions $\Hord^{(r)}_{(J,b)} $ is that they are constantly equal to 1 at points of 
$\Sing(J,b)$, in a neighborhood of $x$, except for  
\begin{equation}\label{orde}
 \Hord_{(J,b)} ^{(d-e)}:\Sing(J,b)\longrightarrow \mathbb Q.
 \end{equation}
In the case of characteristic zero, $\Hord^{(d-e)}$ admits the following description:
Set $V^{(d-e)}\subset V^{(d)}$ and $(\overline{J},b')$, as above. Recall that
$\Sing(J,b)=\Sing(\overline{J},b')$, finally set
\begin{equation}\label{ordez}
\Hord^{(d-e)}_{(J,b)} (y)=\frac{\nu_y(\overline{J})}{b'},
\end{equation} 
for $y\in \Sing(J,b)$, where $\nu_y(\overline{J})$ denotes the order of $\overline{J}$ at 
$\calo_{V^{(d-e)},y}$. This local description, which holds only in characteristic zero, indicates that (\ref{orde}) is upper semi-continuos.

However, in the case of positive characteristic, $ \Hord_{(J,b)} ^{(d-e)}(y)$  does not admit a local description as in (\ref{ordez}). Moreover, in positive characteristic (\ref{orde}) is not always upper 
semi-continuos. 

\end{parrafo}

\begin{parrafo}{\bf Objective of this work.} 

We shall indicate later how $ \Hord_{(J,b)}^{(d-e)}(y)$ is defined. The behavior of this function is rather untraceable. It has been studied in previous works (e.g. \cite{Moh}, \cite{Moh96} \cite{Co}, \cite{CP1}, \cite{HaKang}), but only as a refinement of hypersurface singularities. Our results here are directed towards singularities of arbitrary schemes over perfect fields, and the outcome is a refinement of the Hilbert-Samuel stratification.

Set $r=d-e$. As in general the functions 
 $ \Hord_{(J,b)}^{(r)} : \Sing(J,b)\longrightarrow \mathbb Q$ are not upper semi-continuos, one cannot expect a nice inductive formulation, as that in (\ref{ordez}), which is valid only in characteristic zero.
In this paper we present two functions on pairs; the \emph{roof function}, say 
\begin{equation}\label{Jroof}
 R_{(J,b)}^{(r)} : \Sing(J,b)\longrightarrow \mathbb Q
 \end{equation}
 and the \emph{floor function}, say
\begin{equation}\label{Jfloor}
 F_{(J,b)}^{(r)} : \Sing(J,b)\longrightarrow \mathbb Q.
 \end{equation}
Both $R_{(J,b)}^{(r)}$  and $F_{(J,b)}^{(r)}$ are upper semi-continuos functions, and 
$$ F_{(J,b)}^r\leq \Hord_{(J,b)}^{(r)}\leq R_{(J,b)}^{(r)}.$$
In characteristic zero we get an equality at the right hand side. In particular, $ \ord_{(J,b)}^{(r)}$ is upper semi-continuous. Moreover, the equality of all three functions  
seems to be the goal to achieve.

The objective of this paper is to explore these functions 
in positive characteristic, and to show that resolution of 
pairs $(J,b)$ is attained when one the the inequalities is an equality.

The roof functions $R^{(r)}$ will be discussed in \ref{pRoof}, and the floor functions $F^{(r)}$ in \ref{pFloor}. Some indications on the H-functions $\Hord^{(r)}_{(J,b)}$ will be discussed in \ref{pord}. A precise formulation of the previous statement is given in Theorem \ref{IntroTH}. A first step in this direction requires a harmless reformulation, in which pairs are replaced by Rees algebras (see \ref{pair2alg}). The advantage of this reformulation is that each algebra can be naturally enlarged to a new algebra which is enriched by the action of differential operators. These are called differential Rees algebras, which have been recently studied due to the strong properties they have (e.g. \cite{Hironaka05}, \cite{Kaw}, \cite{KM}, \cite{VV1}, \cite{VV4}, \cite{BV3}, \cite{BeVJapon}, \cite{Wlo2}). Moreover, for the purpose of our study we can always restrict attention to these  algebras.
\end{parrafo}

\begin{parrafo}
\noindent{\bf From Pairs to Rees algebras}.\label{pair2alg} 

A pair over $V^{(d)}$ can be viewed as an algebra.
A \emph{Rees algebra} over $V^{(d)}$ is an algebra of the form $\G=\bigoplus_{n\in\N} I_nW^n$, where $I_0=\calo_{V^{(d)}}$ and each $I_n$ is a coherent sheaf of ideals. Here $W$ denotes a dummy variable introduced to keep track of the degree, so $\G\subset\calo_{V^{(d)}}[W]$ is an inclusion of graded algebras. It is always assumed that, locally at any point of $V^{(d)}$, $\G$ is a finitely generated $\calo_{V^{(d)}}$-algebra.  Namely, that restricting to an affine set,  there are \emph{local generators}, say $\{f_{n_1},\dots,f_{n_s}\}$, so that
$${\mathcal G}=\calo_{V^{(d)}}[f_{n_1}W^{n_1}, \dots ,f_{n_s}W^{n_s} ](\subset  \calo_{V^{(d)}}[W]).$$

We now set the \emph{singular locus of} $\G=\bigoplus I_nW^n$ to be the closed set:
$$\Sing(\G):=\{x\in V^{(d)}\ | \ \nu_x(I_n)\geq n\hbox{ for each }n\in\N\}.$$

Fix a monoidal transformation $V^{(d)}\overset{\pi_C}{\longleftarrow}V^{(d)}_1$  with  center $C\subset\Sing(\G)$. For all $n\in\nat$ there is a factorization of the form
$$I_n\calo_{V_1^{(d)}}=I(H_1)^n\cdot I_n^{(1)},$$
where $H_1=\pi_C^{-1}(C)$ denotes the exceptional hypersurface. 
This defines a Rees algebra over  $ V_{1}^{(d)}$, say $\G_1=\bigoplus_{n\in\nat}I_n^{(1)}W^n$, called the \emph{transform of $\G$}. This transformation will be denoted by
\begin{equation}\label{trRA}
\xymatrix@R=0pc@C=0pc{
\G & & & & & \G_1\\
V^{(d)}  &  & & & &   V_{1}^{(d)}\ar[lllll]_{\pi_C}
}\end{equation}

A sequence of transformations will be denoted by:
\begin{equation}\label{seqintro1}
\xymatrix@R=0pc@C=0pc{
\G & & & & & \G_1 &  & & & &  &  & & & &   \G_r\\
V^{(d)}  &  & & & &   V_{1}^{(d)}\ar[lllll]_{\pi_{C}}   & & & & & \dots \ar[lllll]_{\pi_{C_1}} &  & & & &   V_{r}^{(d)}\ar[lllll]_{\pi_{C_{r-1}}}  \\
}
\end{equation}
and herein we always assume that the exceptional locus of the composite morphism $V^{(d)}  \longleftarrow V_r^{(d)}$, say $\{H_1,\dots,H_r\}$, is a union of hypersurfaces with only normal crossings in $V^{(d)}_r$.
A sequence (\ref{seqintro1}) is a \emph{resolution of} $\G$ if in addition $\Sing(\G_r)=\emptyset$.

A pair $(J,b)$ over $V^{(d)}$ defines an algebra, say 
$${\mathcal G}_{(J,b)}=\calo_{V^{(d)}}[JW^b]. $$
Note here that $\Sing(J,b)=\Sing({\mathcal G}_{(J,b)})$, and that
setting $\G= {\mathcal G}_{(J,b)} $ at (\ref{trRA}), then 
$\G_1= {\mathcal G}_{(J_1,b)} $, where $(J_1,b)$ denotes the transform of $(J,b)$. So algebras appear as a naive reformulation of pairs, and a resolution of ${\mathcal G}_{(J,b)}$ is the same as  a resolution of $(J,b)$. Furthermore, all the previous discussion, developed in terms of pairs, has a natural analog for Rees algebras.
We first discuss, briefly,  the reformulation of the $\tau$-invariant for algebras over $V^{(d)}$.

 \end{parrafo}

\begin{parrafo}{\bf Hironaka's $\tau$-invariant}\label{tauG}. 
Recall that 
${\mathbb T}_{V^{(d)},x}=\Spec(gr_M(\calo_{V^{(d)},x}))$.
An homogeneous ideal $\In_x(\G)$ in $gr_M(\calo_{V^{(d)},x})$ is defined by $\G$ at $x\in \Sing(\G)\subset V^{(d)}$. The \emph{tangent cone}, say $\mathcal C_{\G}(x)\subset \mathbb T_{V^{(d)},x},$ will be the cone defined by this ideal, and  $\L_{\G,x}\subset \mathcal C_{\G}(x)$ will denote the \emph{subspace of vertices} of $ \mathcal C_{\G}(x)$. Finally, set 
 $$\tau_{\G}:\Sing(\G)\longrightarrow\mathbb{Z}_{\geq0}$$
by setting $\tau_{\G}(x)$ as the codimension of the subspace
 $\L_{\G,x}$ in ${\mathbb T}_{V^{(d)},x}$.
 
 For the particular case of $\G=\G_{(J,b)}$, we get $\tau_{\G}(x)=\tau_{(J,b)}(x)$ at any $x\in \Sing(J,b)=\Sing(\G)$.
 \end{parrafo}

\begin{parrafo}\label{rp23}{\bf Rees algebras and differential structure}. 
There are various advantages in formulating invariants in terms of Rees algebras as opposed to their formulation in terms of pairs. One of them arises when studying Rees algebras with a natural compatibility with differential operators. 

A Rees algebra  ${\mathcal G}=\bigoplus_{n\geq 0}I_nW^n$ over  $V^{(d)}$ is said to be a {\em differential Rees algebra} if locally, say over any open affine set, 
$D_{r}(I_n)\subset I_{n-r},$
 for any index $n$ and for any differential operator $D_r$  of order $r<n$.

If this property holds for all $k$-linear differential operators, then we say that $\G$ is an \emph{absolute differential Rees algebra} over the smooth scheme $V^{(d)}$. When a smooth morphism 
 $V^{(d)}\overset{\beta}{\longrightarrow}  V^{(d')}$ is fixed, and the previous property holds 
for differential operators which are $\calo_{ V^{(d')}}$-linear, or say, 
$\beta$-relative operators, then $\G$ is said to be a \emph{$\beta$-relative differential Rees algebra}, or simply \emph{$\beta$-differential}.
\begin{proposition}
Every Rees algebra $\G$ over $V^{(d)}$ admits an extension to a new Rees algebra, say $\G\subset Diff(\G)$, so that $Diff(\G)$ is a differential Rees algebra. It has the following properties:
\begin{enumerate}
\item $Diff(\G)$ is the smallest differential Rees algebra containing $\G$.

\item $\Sing(\G)=\Sing(Diff(\G))$.

\item The equality in (2) is preserved by transformations. In particular, any resolution of $\G$ defines a resolution of $Diff(\G)$, and the converse holds.
\end{enumerate}
\end{proposition}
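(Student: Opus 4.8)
The plan is to construct $Diff(\G)$ explicitly in local coordinates, verify the three listed properties, and then observe that the construction is intrinsic, so that the local pieces glue. For the construction: over an affine open on which $\G=\calo[f_{n_1}W^{n_1},\dots,f_{n_s}W^{n_s}]$ and a regular system of parameters $z_1,\dots,z_d$ is fixed, I would use that over a perfect field the sheaf $Diff^{\,r}=Diff^{\,r}_{V^{(d)}/k}$ of differential operators of order $\le r$ is a locally free, finitely generated $\calo$-module spanned by the Hasse--Schmidt (divided) derivatives $\Delta^{(\alpha)}$ with $|\alpha|\le r$, that $Diff^{\,s}\!\cdot Diff^{\,r}\subseteq Diff^{\,s+r}$, and that the coproduct rule $\Delta^{(\gamma)}(uv)=\sum_{\alpha+\beta=\gamma}\Delta^{(\alpha)}(u)\Delta^{(\beta)}(v)$ holds in any characteristic. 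I would then set
\[
Diff(\G):=\calo\bigl[\,D(f_{n_i})\,W^{n_i-r}\ :\ 1\le i\le s,\ 0\le r<n_i,\ D\in Diff^{\,r}\,\bigr],
\]
which is a Rees algebra (finitely many $\Delta^{(\alpha)}$ suffice) containing $\G$ (take $r=0$, $D=\mathrm{id}$). Note that each differentiated generator $D(f_{n_i})$ is included at \emph{every} admissible $W$-degree $n_i-r$ with $\ord D\le r<n_i$; equivalently, $Diff(\G)$ is obtained by iterating ``adjoin the derivatives of the current generators'' until it stabilises, a process that terminates by finite generation of the operator modules and the ascending chain condition on the coherent ideals appearing in degrees $\le\max_i n_i$.

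For property~(1): $Diff(\G)$ is differential because, for a generator $D(f_{n_i})W^{m}$ and an operator $D'$ of order $<m$, the composite $D'\!\circ D$ has order $<n_i$, so $D'(D(f_{n_i}))W^{m-\ord D'}$ is again a listed generator; for an arbitrary $hW^m\in Diff(\G)$, writing $h$ as a combination of products of generators and distributing $D'$ over the factors via the coproduct rule expresses $D'(h)$ as a combination of products of such differentiated generators, and these can be placed in $W$-degrees summing exactly to $m-\ord D'$ because each factor is available at all smaller positive degrees. For minimality, any differential Rees algebra containing $\G$ contains each $f_{n_i}W^{n_i}$, hence all its iterated derivatives $D(f_{n_i})W^{n_i-r}$, hence $Diff(\G)$; thus $Diff(\G)$ is locally the intersection of all differential Rees algebras containing $\G$, a description free of the choices made, so the local pieces glue to a global smallest differential Rees algebra $Diff(\G)$ containing $\G$.

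For property~(2): for any Rees algebra presented by generators one has $\Sing(\calo[g_\alpha W^{m_\alpha}])=\{x:\nu_x(g_\alpha)\ge m_\alpha\ \forall\alpha\}$, as orders are additive on products and subadditive on sums. Then $\Sing(Diff(\G))\subseteq\Sing(\G)$ follows from $\G\subseteq Diff(\G)$; conversely, on a regular local ring over a perfect field an operator of order $\le r$ maps $M^n$ into $M^{n-r}$, so $\nu_x(D(f))\ge\nu_x(f)-r$, whence if $x\in\Sing(\G)$ then $\nu_x(D(f_{n_i}))\ge n_i-r$ for every generator $D(f_{n_i})W^{n_i-r}$ of $Diff(\G)$, i.e.\ $x\in\Sing(Diff(\G))$.

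Property~(3) is the delicate one, and I expect the $W$-degree bookkeeping below to be the main obstacle. The key claim is that for a transformation $\G\overset{\pi_C}{\longleftarrow}\G_1$ with centre $C\subseteq\Sing(\G)=\Sing(Diff(\G))$ one has $(Diff(\G))_1\subseteq Diff(\G_1)$. Using the standard fact (cf.\ \cite{VV1}) that the transform of $\calo[g_\alpha W^{m_\alpha}]$ is generated by the controlled transforms $g_\alpha^{(1)}W^{m_\alpha}$ ($g_\alpha^{(1)}=g_\alpha I(H)^{-m_\alpha}$), it suffices to show each $D(f_{n_i})^{(1)}W^{n_i-r}\in Diff(\G_1)$; choosing $z_d$ with $I(H)=\langle z_d\rangle$ and writing $f_{n_i}=f_{n_i}^{(1)}z_d^{n_i}$, $D=\Delta^{(\alpha)}$ with $|\alpha|=r$, the coproduct rule together with $\Delta^{(\gamma)}(z_d^{n_i})=\binom{n_i}{c}z_d^{n_i-c}$ for $\gamma=c\,e_d$ (and $0$ for $\gamma$ involving other coordinates) gives
\[
\Delta^{(\alpha)}(f_{n_i})^{(1)}=\sum_{c}\binom{n_i}{c}\,z_d^{r-c}\,\Delta^{(\alpha-c\,e_d)}(f_{n_i}^{(1)}),
\]
and each summand lies in $Diff(\G_1)$: $\Delta^{(\alpha-c\,e_d)}$ has order $r-c\le r<n_i$, so $\Delta^{(\alpha-c\,e_d)}(f_{n_i}^{(1)})W^{n_i-r}$ is among the defining generators of $Diff(\G_1)$ (again using that differentiated generators are included at all admissible $W$-degrees), and $z_d^{r-c}\in\calo$. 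Combined with the trivial inclusion $\G_1\subseteq(Diff(\G))_1$, applying $\Sing$ to $\G_1\subseteq(Diff(\G))_1\subseteq Diff(\G_1)$ and property~(2) for $\G_1$ yields $\Sing((Diff(\G))_1)=\Sing(\G_1)$. Iterating this along a sequence \eqref{seqintro1}, at each stage $\G_i\subseteq(Diff(\G))_i\subseteq Diff(\G_i)$, so the singular loci agree, the admissible centres coincide, and --- since the exceptional configuration on $V^{(d)}_r$ is the same whichever algebra is tracked --- a sequence is a resolution of $\G$ if and only if it is a resolution of $Diff(\G)$. The characteristic-$p$ point is precisely that the displayed identity (and its analogue in the differentiality check) must be run with Hasse--Schmidt derivatives, so that coefficients $\binom{n_i}{c}$ vanishing modulo $p$ merely suppress terms --- harmless here, since we only need each surviving summand in $Diff(\G_1)$ --- and one must keep careful track of which $W$-degrees the differentiated generators sit in.
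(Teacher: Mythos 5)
The paper itself offers no proof of this proposition --- it is stated and referred to \cite[Theorems 3.2 and 4.1]{VV1} --- so your explicit construction is the natural thing to supply. Your treatment of (1) and (2) is essentially the standard argument and is sound: the decision to list each differentiated generator at \emph{every} admissible $W$-degree is exactly what makes the Leibniz expansion land in the correct graded piece, and the fact that operators of order $\le r$ map $M^n$ into $M^{n-r}$ (clear on the divided-derivative generators of $Diff^r$ over a perfect field) gives $\Sing(\G)\subseteq\Sing(Diff(\G))$.

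Part (3), however, has a genuine gap, and it sits exactly where you predicted. The generators of $(Diff(\G))_1$ are the controlled transforms $I(H)^{-(n_i-r)}\pi^*\bigl(D(f_{n_i})\bigr)$ with $D$ a differential operator \emph{on} $V^{(d)}$. Your displayed identity instead applies a divided derivative $\Delta^{(\alpha)}$ taken with respect to coordinates \emph{on} $V^{(d)}_1$ adapted to $H$: this is forced by writing $\pi^*f_{n_i}=f_{n_i}^{(1)}z_d^{n_i}$ with $I(H)=\langle z_d\rangle$ and by using $\Delta^{(\gamma)}(z_d^{n_i})=\binom{n_i}{c}z_d^{n_i-c}$, both of which only make sense upstairs. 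Since $\pi^*$ does not commute with differential operators, these are different objects: for the blow-up of the origin of $\mathbb{A}^2$ in the chart $y=xy'$ one has $\pi^*\circ\partial_y=x^{-1}\,\partial_{y'}\circ\pi^*$, and for $f=y^2+x^3$ the function $\pi^*(\partial_yf)=2xy'$ is not $E(\pi^*f)$ for any regular first-order operator $E$ on the chart. So your computation proves that $V_1$-derivatives of $\pi^*f_{n_i}$, suitably controlled, lie in $Diff(\G_1)$ --- true, but not the needed inclusion $(Diff(\G))_1\subseteq Diff(\G_1)$. The missing ingredient is the transformation law $I(H)^{r}\cdot\pi^*\bigl(Diff^r_{V^{(d)}}\bigr)\subseteq Diff^{r}_{V^{(d)}_1}$ (coordinates vanishing on the centre acquire first-order poles, e.g.\ $\partial/\partial z_j=z_1^{-1}\,\partial/\partial z_j'$), which converts the controlled transform of $D(f_{n_i})$ into $I(H)^{-n_i}E(\pi^*f_{n_i})$ with $E:=I(H)^{r}\pi^*(D)$ an honest operator of order $\le r$ on $V^{(d)}_1$; only after that reduction does your Leibniz computation apply. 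This is precisely Giraud's Lemma, which the paper invokes without proof in Theorem \ref{rmkweq}(2) and which is the substance of \cite[Theorem 4.1]{VV1}: you must either prove that transformation law or cite it, since as written the key step of (3) is asserted rather than established.
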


The property in (3) says that, for the sake of defining a resolution of $\G$, we may always assume that it is a differential Rees algebra. This is an important reduction because, as we shall see, differential Rees algebras have very powerful properties. Further details can be found in \cite[Theorems 3.2 and 4.1]{VV1}.
\end{parrafo}

\begin{parrafo}{\bf Transversal projections and elimination}\label{trans}.
Once we fix a closed point $x\in V^{(d)}$ it is very simple to construct, for any positive integer $d'\leq d$, a smooth scheme $V^{(d')}$ together with a smooth morphism $\beta:V^{(d)}\longrightarrow V^{(d')}$ (a projection), at least when restricting $V^{(d)}$ to an \'etale neighborhood of $x$. The claim follows, essentially, from the fact that $(V^{(d)},x)$ is an \'etale neighborhood of $(\mathbb{A}^{(d)},\mathbb{O})$; and plenty of smooth morphisms (in fact, plenty of surjective linear transformations) $(\mathbb{A}^{(d)},\mathbb{O})\longrightarrow (\mathbb{A}^{(d')},\mathbb{O})$ can be constructed. 
Note that if $\{x_1,\dots,x_d\}$ is a regular system of parameters at $\calo_{V^{(d)},x}$, then $(V^{(d)},x)$ is an \'etale neighborhood of $\mathbb{A}^{(d)}_k=\Spec(k[x_1,\dots,x_d])$ at the origin.

Furthermore, given a subspace $S$ of dimension $d-d'$ in  ${\mathbb T}_{V^{(d)},x}$, one can easily construct
$ V^{(d')}$ and a smooth $\beta:V^{(d)}\longrightarrow V^{(d')}$ so that 
$ker (d(\beta)_x)=S$ (here $d(\beta)_x:{\mathbb T}_{V^{(d)},x} \to {\mathbb T}_{V^{(d')},\beta(x)}$ is a surjective linear transformation).

Fix now a differential Rees algebra over $V^{(d)}$ and a closed point $x\in\Sing(\G)$. Recall here that $\tau_\G(x)=e$ is the codimension of $\mathcal{L}_{\G,x}(\subset\mathcal{C}_{\G,x})$ in the tangent space. Set $d'$ so that $d \geq d'\geq d-e$. 

For $d'$ in these conditions we say that a smooth morphism  $\beta:V^{(d)}\longrightarrow V^{(d')}$ is {\em transversal} to $\G$ at $x$ if 
$$ker(d\beta)_x\cap\mathcal{L}_{\G,x}=\mathbb{O}.$$ 

This condition is open (it holds at points in a neighborhood of $x$), and a smooth morphism  $\beta:V^{(d)}\longrightarrow V^{(d')}$ is said to be {\em transversal} to $\G$, if this 
condition holds at any point of $\Sing(\G)$.

Since $\G$ is a differential Rees algebra, it is, in particular, a $\beta$-differential Rees algebra. 

\begin{proposition}\label{elim} Assume that $\beta:V^{(d)}\longrightarrow V^{(d')}$ is transversal to $\G$, and that $\G$ is $\beta$-differential.
Then a Rees algebra $\R_{\G,\beta}$ is defined 
over $V^{(d')}$, say
\begin{equation}\label{lec1}
\xymatrix@C=2.5pc@R=-0.15pc{
\G & \R_{\G,\beta}\\
V^{(d)}\ar[r]^{\beta} & V^{(d')}
}
\end{equation}
(i.e., $\R_{\G,\beta}\subset\calo_{V^{(d')}}[W]$ ) with the following properties:
\begin{enumerate} 
\item The natural lifting of $\R_{\G,\beta}$, say $\beta^*(\R_{\G,\beta})$, is a subalgebra of $\G$. (\cite[Theorem 4.13]{VV4})

\item $\beta({\Sing(\G)}) \subset \Sing(\R_{\G,\beta})$, and moreover,
$\beta|_{\Sing(\G)}:\Sing(\G)\longrightarrow \Sing(\R_{\G,\beta})$ defines  a set theoretical bijection of $\Sing(\G)$ with its image.  (\cite[1.15 and Theorem 4.11]{VV4}, or \cite[7.1]{BV3}).

\item Given a smooth sub-scheme $Y\subset\Sing(\G)$, then $\beta(Y)(\subset\Sing(\R_{\G,\beta}))$ is isomorphic to $Y$. In particular $Y$ defines a transformation of $\G$ and also of $\R_{\G,\beta}$. (\cite[Theorem 9.1 (i)]{BV3}).

\item (\cite[Theorems 10.1 and 9.1]{BV3}). A smooth center $Y\subset\Sing(\G)$ defines a commutative diagram
\begin{equation}\label{lec2}\xymatrix@R=-0.15pc@C=3pc{
\G &  \G_1\\
V^{(d)}\ar[dddd]^{\beta} & V_1^{(d)}\ar[l]_{\pi_Y}\ar[dddd]^{\beta_1}\\
\\
\\
\\
V^{(d-1)} & V_1^{(d-1)}\ar[l]_{\pi_{\beta(Y)}}\\
\R_{\G,\beta} & (\R_{\G,\beta})_1
}\end{equation}
where $\G_1$ and $(\R_{\G,\beta})_1$ denote the transforms of 
$\G$ and $\R_{\G,\beta}$  respectively.  Here $\beta_1$ is defined in the restriction of $V_1^{(d)}$ to a neighborhood of $\Sing(\G_1)$, and this diagram has the following properties:

{\rm(4a)} $V_1^{(d)}\overset{\beta_1}{\longrightarrow} V_1^{(d')}$ is transversal to 
$\G_1$ and $\G_1$ is $\beta_1$-differential. In particular, we get:
$$\xymatrix@C=2.5pc@R=0pc{
\G_1 & \R_{\G_1,\beta_1}\\
V_1^{(d)}\ar[r]^{\beta_1} & V_1^{(d')}
}$$

{\rm (4b)}   $(\R_{\G,\beta})_1$ coincides with the algebra 
$\R_{\G_1,\beta_1}$, defined by $\beta_1$.
\end{enumerate}
\end{proposition}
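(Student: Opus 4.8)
The plan is to reduce everything to an explicit local model and then to glue the results of \cite{VV4} and \cite{BV3} cited in the statement. First I would work in an \'etale neighbourhood of a closed point $x\in\Sing(\G)$, choosing a regular system of parameters $\{y_1,\dots,y_{d-d'},x_1,\dots,x_{d'}\}$ at $\calo_{V^{(d)},x}$ adapted to $\beta$, so that $\calo_{V^{(d')}}$ is \'etale-locally $k[x_1,\dots,x_{d'}]$ and $\beta$ is the projection killing the $y_j$. Transversality, $\ker(d\beta)_x\cap\L_{\G,x}=\mathbb{O}$, forces $\G$ to contain, in each relevant weight, elements that are monic along the fibre directions modulo higher order terms; from the coefficients of the associated universal elimination polynomials (a positive-characteristic substitute for Weierstrass preparation), taken with their natural weights, one builds $\R_{\G,\beta}\subset\calo_{V^{(d')}}[W]$. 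This is the construction of \cite[Theorem 4.13]{VV4}, and with this description part (1), $\beta^{*}(\R_{\G,\beta})\subset\G$, is immediate: those coefficients are obtained from elements of $\G$ by applying $\beta$-relative differential operators and forming combinations that stay inside $\G$, which is exactly what the hypothesis that $\G$ be $\beta$-differential guarantees.

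For (2), the inclusion $\beta(\Sing(\G))\subset\Sing(\R_{\G,\beta})$ is a formal consequence of (1), since $\beta^{*}(\R_{\G,\beta})\subset\G$ gives $\Sing(\G)\subset\beta^{-1}(\Sing(\R_{\G,\beta}))$. To obtain the bijection onto the image I would fix $z\in\Sing(\R_{\G,\beta})$ and restrict $\G$ to the fibre $\beta^{-1}(z)$, a smooth affine space of dimension $d-d'$ over $\kappa(z)$: the restricted singular locus is cut out, among other things, by the restrictions of the monic elements above, and transversality forces it to be a single rational point, which moreover lies over $z$. Hence $\Sing(\G)\cap\beta^{-1}(z)$ is at most one point, giving injectivity, while surjectivity onto $\Sing(\R_{\G,\beta})$ is the converse implication; this is \cite[1.15, Theorem 4.11]{VV4} (alternatively \cite[7.1]{BV3}), and the differential structure is used again to run it.

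Part (3) refines (2): $\beta|_{\Sing(\G)}$ restricts to an isomorphism of any smooth $Y\subset\Sing(\G)$ onto the smooth subscheme $\beta(Y)\subset\Sing(\R_{\G,\beta})$ (\cite[Theorem 9.1(i)]{BV3}), so $Y$ is an admissible centre for both algebras. For (4) I would blow up $Y$ in $V^{(d)}$ and $\beta(Y)$ in $V^{(d')}$ simultaneously; as $\beta$ is smooth and $Y\to\beta(Y)$ is an isomorphism, the universal property of the blow-up produces the morphism $\beta_1\colon V_1^{(d)}\to V_1^{(d')}$ near the exceptional loci, which one checks is smooth there. It then remains to verify (4a), that $\beta_1$ is transversal to $\G_1$ and $\G_1$ is $\beta_1$-differential — by tracking the tangent cone of $\G_1$ and its vertex subspace at points of $\Sing(\G_1)$ and checking it still meets $\ker(d\beta_1)$ only at the origin — and (4b), that elimination commutes with transformation, $(\R_{\G,\beta})_1=\R_{\G_1,\beta_1}$. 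For (4b) the efficient route is to characterise $\R_{\G,\beta}$ intrinsically, independently of the chosen monic elements, in a form that is manifestly stable under transformation, rather than comparing generators after blow-up; this is the content of \cite[Theorems 10.1 and 9.1]{BV3}.

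The main obstacle is precisely item (4): the equality $(\R_{\G,\beta})_1=\R_{\G_1,\beta_1}$ together with the persistence of transversality. In characteristic zero the elimination algebra admits a ``restriction to a smooth section'' description which makes such compatibilities transparent; in positive characteristic no such description exists, so one must argue through the differential structure and an a priori non-canonical choice of monic elements, showing both that the outcome is independent of those choices and that it is compatible with the factorisation $I_n\calo_{V_1^{(d)}}=I(H_1)^n I_n^{(1)}$ of the pulled-back ideals. A subsidiary difficulty is that every step above is initially valid only on an \'etale neighbourhood of a point, so one must check that the pieces glue over all of $\Sing(\G)$ — which is exactly why the global statements of \cite{VV4} and \cite{BV3} are invoked rather than reproved here.
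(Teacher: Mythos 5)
This proposition is stated in the paper without proof: each item carries its citation to \cite{VV4} or \cite{BV3}, and your proposal follows the same route, reconstructing the local model behind those citations and then invoking them for the global statements. Most of your sketch is therefore consistent with what the paper does. There is, however, one genuine error, and it concerns exactly the phenomenon this paper is built around. In part (2) you argue that for every $z\in\Sing(\R_{\G,\beta})$ the fibre $\Sing(\G)\cap\beta^{-1}(z)$ is a single rational point, and you conclude ``surjectivity onto $\Sing(\R_{\G,\beta})$''. That is false in positive characteristic: the inclusion $\beta(\Sing(\G))\subset\Sing(\R_{\G,\beta})$ is in general strict (see item (5) of \ref{cuad}: only in characteristic zero is it an equality), and this failure of surjectivity is precisely why $\Ord^{(d-e)}$ only gives an upper bound for the H-function rather than computing it. The proposition is worded to claim only a bijection of $\Sing(\G)$ with its \emph{image}; that follows from the injectivity half of your argument (Zariski's multiplicity lemma: the fibre over a point of $\beta(\Sing(\G))$ meets $\Sing(\G)$ in a unique, rational point), while the ``converse implication'' you appeal to does not hold. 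Transversality does give you monic elements whose restriction to the fibre over $z\in\Sing(\R_{\G,\beta})$ is an $n$-th power of a linear form, but the resulting point of the fibre need not lie in $\Sing(\G)$.

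A smaller imprecision occurs in (4): the morphism $\beta_1$ is not obtained from the universal property of the blow-up on all of $V_1^{(d)}$ ``near the exceptional loci''. Since $Y\subsetneq\beta^{-1}(\beta(Y))$ whenever $d>d'$, the pullback $I(\beta(Y))\calo_{V_1^{(d)}}$ equals $I(H_1)$ times the ideal of the strict transform of $\beta^{-1}(\beta(Y))$, which is not invertible unless $\beta(Y)$ has codimension one in $V^{(d')}$. So $\beta_1$ exists only after restricting to the complement of that strict transform, and one must verify that this complement contains $\Sing(\G_1)$ --- this is why the statement says $\beta_1$ is defined only on a neighbourhood of $\Sing(\G_1)$, and it is part of what \cite[Theorem 9.1]{BV3} actually proves rather than a formal consequence of the universal property.
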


The algebra $\R_{\G,\beta}$ over $ V^{(d')}$ defined by a transversal
 $\beta:V^{(d)}\longrightarrow V^{(d')}$ as in (\ref{lec1}), is called the 
 {\em elimination algebra} of $\G$ defined by $\beta$.

\end{parrafo}

\begin{parrafo}\label{cuad}
The previous Proposition says that given 
 $\beta:V^{(d)}\longrightarrow V^{(d')}$ transversal to $\G$, and if 
 $\G$ is a $\beta$-differential Rees algebra (e.g., if $\G$ is an absolute differential Rees algebra), then
an arbitrary sequence of monoidal transformations, say
\begin{equation}\label{cmut}\xymatrix@R=0pc{
\G & \G_1  & & \G_r\\
V^{(d)} & V_1^{(d)}\ar[l]_{\pi_{Y}} & \dots\ar[l]_{\pi_{Y_1}} &V_r^{(d)}\ar[l]_{\pi_{Y_{r-1}}}
}
\end{equation}

gives rise to a diagram, say
\begin{equation}\label{conmut}
\xymatrix@R=0pc@C=3.5pc{
\G & \G_1  & & \G_r\\
V^{(d)}\ar[dddd]_ {\beta} & V_1^{(d)}\ar[l]_{\pi_{Y}}\ar[dddd]_ {\beta_1} & \dots\ar[l]_{\pi_{Y_1}} &V_r^{(d)}\ar[dddd]_ {\beta_r}\ar[l]_{\pi_{Y_{r-1}}}\\
\\
\\
\\
V^{(d')} & V_1^{(d')}\ar[l]_{\pi_{\beta(Y)}} & \dots\ar[l]_{\pi_{\beta_1(Y_1)}} &V_r^{(d')}\ar[l]_{\pi_{\beta_{r-1}(Y_{r-1})}}\\
\R_{\G,\beta} & (\R_{\G,\beta})_1 & & (\R_{\G,\beta})_r
}\end{equation}
where:
\begin{enumerate}
\item For any index $i$, there is an inclusion $(\R_{\G,\beta})_i\subset \G_i$.

\item Every $\beta_i$ is transversal to $\G_i$ and $\G_i$ is $\beta_i$-differential.

\item $V_i^{(d')}\overset{\pi_{\beta(Y_i)}}{\longleftarrow} V^{(d')}_{i+1}$ denotes the transformation with center $\beta_i(Y_i)$ which is isomorphic to $Y_i$.

\item $(\R_{\G,\beta})_i=\R_{\G_i,\beta_i}$ where the later denotes the elimination algebra of $\G_i$ with respect to $\beta_i$.

\item  $\beta_i(\Sing(\G_i))\subset \Sing((\R_{\G,\beta})_i)$, and $\beta_i|_{\Sing(\G_i)}:\Sing(\G_i)\longrightarrow \beta_i(\Sing(\G_i))$ is an identification.  In the characteristic zero case, the previous inclusions are equalities, but in positive characteristic, in general, only the inclusion holds.
\end{enumerate}
\end{parrafo}

\begin{remark} It is convenient to start with a differential Rees algebra $\G$, and we know that this can be done for free. In fact, in this case,  locally at any closed point $x\in\Sing(\G)$, one can construct a smooth scheme $V^{(d')}$ and a smooth morphism $\beta:V^{(d)}\longrightarrow V^{(d')}$, and then $\G$ will always be a $\beta$-differential Rees algebra. In fact an absolute differential Rees algebra is always relative differential.
\end{remark}

\begin{parrafo}{\bf Order and lower dimensional H-functions and Rees algebras}.

Fix a Rees algebra $\G=\bigoplus I_nW^n$ over a $d$-dimensional smooth scheme $V^{(d)}$. 
Lower dimensional H-functions 
$$\Hord^{(r)}(\G):\Sing(\G)\longrightarrow \mathbb{Q}_{\geq 0}$$ 
are defined for $r$ in a certain range $d'\leq r\leq d$ (for some $d'\leq d$). For $r=d$, the function $\Hord^{(d)}$ is usually denoted simply by $\ord$, we will use this notation along this work. In this case,
\begin{equation}\label{ordG}
\ord(\G)(x)=\min\Big\{\frac{\nu_x(I_n)}{n}\ |\ n\in\mathbb{N}\Big\},
\end{equation}
with $x\in\Sing(\G)$. 

\vspace{0.2cm}

\noindent {\bf Facts}:
\begin{itemize}
\item If $\tau_{\G}(x)\geq 1$, then the function is constantly equal to 1 in an open neighborhood of $x$.
\item If $\tau_{\G}(x)\geq e$, then, in a neighborhood of $x$, the functions are defined in a range $d-e\leq r\leq d$. Moreover,  $\Hord^{(r)}(\G)$ is equal to $1$ in a neighborhood of $x$ for $r\geq d-e+1$.
\item A particular feature is that, in general, the function $\ord(\G)$ is upper semi-continuous.
\item In contrast, if $\tau_{\G}(x)\geq e$ the function $\Hord^{(d-e)}(\G)$ might not be upper semi-continuous.
\end{itemize}

\end{parrafo}

\begin{parrafo}{\bf On the upper-bound function}\label{pRoof}. 

Fix a differential Rees algebra $\G$. We make use of property (1) in Proposition \ref{elim} to define, for a given sequence of transformations (\ref{cmut}), new functions:
$$\Ord^{(d-e)}(\G_i):\Sing(\G_i)\longrightarrow \mathbb{Q}_{\geq 0}.$$
To this end fix $\beta:V^{(d)}\longrightarrow V^{(d')}$ transversal to $\G$. This defines a sequence (\ref{conmut}). Set:
$$\Ord^{(d-e)}(\G_i)(x)=\ord((\R_{\G,\beta})_i)(\beta_i(x)).$$

\begin{claim} For any $x\in\Sing(\G)$:
$$\Hord^{(d-e)}(\G_i)(x)\leq \Ord^{(d-e)}(\G_i)(x).$$
That is, the upper semi-continuous function $\Ord^{(d-e)}$ provides an upper bound for our H-function. 
\end{claim}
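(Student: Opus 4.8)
The plan is to track both functions along the sequence (\ref{conmut}) and reduce the claim, point by point, to the single statement that at any closed point $x\in\Sing(\G)$ one has $\Hord^{(d-e)}(\G)(x)\le \ord(\R_{\G,\beta})(\beta(x))$; the assertion for each $\G_i$ then follows by applying this base case to the transform $\G_i$ with respect to $\beta_i$, since by Proposition \ref{elim}(4) (and its iteration in \ref{cuad}) $\beta_i$ is again transversal, $\G_i$ is $\beta_i$-differential, and $(\R_{\G,\beta})_i=\R_{\G_i,\beta_i}$. So the content is entirely at ``level zero''.

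First I would recall the definition of $\Hord^{(d-e)}(\G)(x)$. After replacing $\G$ by $\mathrm{Diff}(\G)$ (legitimate by Proposition \ref{rp23}, since this changes neither $\Sing$ nor the resolution problem, and one checks it does not change the H-functions either), one chooses, locally at $x$, a transversal smooth morphism $\beta:V^{(d)}\to V^{(d-e)}$ with $\ker(d\beta)_x$ a complement of a line in $\mathcal L_{\G,x}$ of the appropriate codimension; equivalently one picks a $(d-e)$-dimensional smooth subscheme $V^{(d-e)}\subset V^{(d)}$ ``adapted'' to $\G$ at $x$, and $\Hord^{(d-e)}(\G)(x)$ is defined as the order, at $x$, of the restricted/eliminated data on $V^{(d-e)}$ — concretely as $\ord(\R_{\G,\beta})(\beta(x))$ for a \emph{suitably generic} choice of $\beta$, while for an arbitrary transversal $\beta$ one only gets an inequality. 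This asymmetry is exactly the phenomenon that in positive characteristic $\Hord^{(d-e)}$ has no intrinsic local description: $\ord(\R_{\G,\beta})(\beta(x))$ \emph{depends} on $\beta$, and $\Hord^{(d-e)}(\G)(x)$ is (essentially) its minimum — or, in the setup of this paper, a value that is in any case $\le$ any such $\ord(\R_{\G,\beta})(\beta(x))$.

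The key step, then, is to prove $\Hord^{(d-e)}(\G)(x)\le \ord(\R_{\G,\beta})(\beta(x))$ for every transversal $\beta$ that is also $\beta$-differential. I would do this by comparing two elimination algebras on $V^{(d-e)}$: the fixed one $\R_{\G,\beta}$ and a ``generic'' one $\R_{\G,\beta_0}$ computing $\Hord^{(d-e)}$. Both are obtained from $\G$ by eliminating the same number $e$ of variables, and by Proposition \ref{elim}(1) both satisfy $\beta^*(\R_{\G,\beta})\subset\G$ and $\beta_0^*(\R_{\G,\beta_0})\subset\G$; the standard comparison results for elimination algebras (independence of the transversal projection up to the value of the order of the elimination algebra being $\ge$ the invariant, \cite{VV4}, \cite{BV3}) give that the invariant $\Hord^{(d-e)}$ — defined as the infimum over all admissible transversal projections, or via a weighted initial form / graded ring computation that is manifestly $\beta$-independent — is bounded above by the order of \emph{any} particular elimination algebra. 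In fact the cleanest route is: unwind $\ord(\R_{\G,\beta})(\beta(x)) = \min_n \nu_{\beta(x)}((\R_{\G,\beta})_n)/n$, pull generators of $\R_{\G,\beta}$ back through $\beta$ into $\G$, and use $\beta$-transversality together with the differential structure of $\G$ to show these pulled-back elements, restricted to the generic section $V^{(d-e)}_0$, have order at least $\ord(\R_{\G,\beta})(\beta(x))$ there; then $\Hord^{(d-e)}(\G)(x)$, being computed from $\R_{\G,\beta_0}$ which contains this restricted data (after taking $\mathrm{Diff}$), is $\ge$ \dots wait — it is the \emph{other} direction, so one rather argues that the elimination algebra is ``universal from below'': the weighted-order datum in dimension $d-e$ that defines $\Hord^{(d-e)}$ is, by construction, a lower bound for the order of every elimination algebra, because each elimination algebra is built by adjoining to (the restriction of) $\G$ certain coefficient ideals obtained by differentiation, and $\Hord^{(d-e)}$ is the order of the ``smallest possible'' such construction.

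\textbf{Main obstacle.} The delicate point is precisely this last comparison: pinning down the definition of $\Hord^{(d-e)}(\G)$ so that the inequality $\Hord^{(d-e)}(\G)(x)\le\ord(\R_{\G,\beta})(\beta(x))$ is either immediate from that definition (if $\Hord^{(d-e)}$ is defined as an infimum over transversal projections) or requires a genuine argument comparing a fixed transversal projection with a generic one. In the latter case the obstacle is that in positive characteristic one cannot move $\beta$ to $\beta_0$ by an automorphism preserving $\G$ exactly; one must instead use that all transversal elimination algebras, while not equal, have the \emph{same} associated resolution invariant up to the relevant order, which is the technical heart of \cite{VV4} and \cite{BV3}. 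I would therefore expect the bulk of the real work to consist of citing and correctly applying those comparison theorems, plus the routine verification that passing to $\mathrm{Diff}(\G)$ and to transforms $\G_i$ is compatible with all the functions involved (which is where Proposition \ref{elim}(4) and \ref{cuad} do their job), reducing the general $\G_i$ case to the base case $i=0$ with no loss.
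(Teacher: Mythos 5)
There is a genuine gap, and it sits exactly where you flagged your ``main obstacle'': you never pin down the actual definition of $\Hord^{(d-e)}$, and the definition you improvise (an infimum of $\ord(\R_{\G,\beta})$ over transversal projections $\beta$, or ``the order of the smallest possible elimination-type construction'') is not the one the paper uses and is not correct. In this paper $\Hord^{(d-e)}(\G_i)(x)$ is defined (Definitions \ref{slr} and \ref{corolvord}, and formulas (\ref{inVOR}), (\ref{eqslgen}), (\ref{pHord})) as the slope of a simplified presentation in normal form at $\beta_i(x)$, i.e.\ as
$$\min\Big\{\tfrac{\nu_{\beta_i(x)}(a^{(i)}_{j_i})}{j_i},\ \ord(\R_{\G_i,\beta_i})(\beta_i(x))\Big\},$$
a minimum taken over the \emph{coefficients} of the monic polynomials $f_{n_i}(z_i)$ \emph{together with} the order of the elimination algebra. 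Since $\Ord^{(d-e)}(\G_i)(x)=\ord((\R_{\G,\beta})_i)(\beta_i(x))$ and $(\R_{\G,\beta})_i=\R_{\G_i,\beta_i}$ (item (4) of \ref{cuad}), the claimed inequality is immediate: a minimum of a finite set containing the term $\ord(\R_{\G_i,\beta_i})(\beta_i(x))$ is at most that term. Nothing about comparing a fixed $\beta$ with a ``generic'' $\beta_0$, and no appeal to the independence theorems of \cite{VV4} or \cite{BV3}, is needed for this direction; those results are needed only to show the value is well defined, not to bound it by $\Ord^{(d-e)}$.

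Your mischaracterization is not a harmless shortcut: if $\Hord^{(d-e)}$ really were an infimum of elimination-algebra orders, it would inherit upper semi-continuity and would coincide with $\Ord^{(d-e)}$ in all characteristics, whereas the whole point of the paper is that in positive characteristic the coefficient terms $\nu(a^{(i)}_{j_i})/j_i$ can fall strictly below $\ord(\R_{\G,\beta})$ and destroy semicontinuity. Your argument also visibly stalls (``wait --- it is the other direction'') and ends in a heuristic about the elimination algebra being ``universal from below'' that is never substantiated. The one part worth keeping is the reduction to a statement about $\G_i$ and $\beta_i$ via Proposition \ref{elim}(4) and \ref{cuad}; but even that is only bookkeeping once the correct definition of the slope is in hand.
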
 

\end{parrafo}

\begin{parrafo}The functions $\Ord^{(d-e)}(\G_i)$ have been studied in \cite{BV3}.
The main results are:

(1) The functions are intrinsic to the sequence of transformations 
(\ref{cmut}). Namely, they are independent of the choice of 
the transversal morphism $\beta : V^{(d)}\to V^{(d')}$ in (\ref{conmut}). (\cite[Theorem 10.1]{BV3})

(2) In the characteristic zero case, both functions $\Hord^{(d-e)}(\G)$ and $\Ord^{(d-e)}(\G)$ coincide. In particular, in such case the function $\Hord^{(d-e)}(\G)$ is upper semi-continuous.

(3) It is proved in \cite[10.4]{BV3} that a sequence (\ref{conmut}) can be constructed so that the elimination algebra $(\R_{\G,\beta})_r$ is \emph{monomial}, i.e., it is defined by an invertible sheaf of ideals supported on the exceptional locus, say
\begin{equation}\label{mon}
(\R_{\G,\beta})_r=\calo_{V_r^{(d-e)}}[I(H_1)^{\alpha_1}\dots I(H_r)^{\alpha_r}W^s]
\end{equation}
where $H_i$ is the strict transform of the exceptional component introduced by $\pi_{\beta_{i-1}(Y_{i-1})}$.

\begin{remark} 
In characteristic zero, it is easy to extend a sequence which is in the monomial case to a resolution of singularities. This extension can be constructed by choosing centers in a simple combinatorial manner. However this is not the case in positive characteristic.

\end{remark}

\end{parrafo}
\begin{parrafo}{\bf On the lower-bound function and tamed H-functions}.\label{pFloor}

A remarkable property of the H-functions, which was studied in \cite{BeV1}, is that is that they enable us to assign, to a sequence of transformations of $\G$, a monomial algebra which turns out to be a useful tool in the study of singularities. The monomial algebra assigned to (\ref{conmut}) is
\begin{equation}\label{leqmn}
\m_r W^s=\calo_{V_r^{(d-e)}}[I(H_1)^{h_1}\dots I(H_r)^{h_r}W^s],
\end{equation}
where each exponent $h_i$ is defined by setting, for $i=0,\dots,r-1$:
$$\frac{h_{i+1}}{s}=\Hord^{(d-e)}(\G_i)(\xi_{Y_i})-1,$$
where $\xi_{Y_i}$ denotes the generic point of $Y_i$.

\begin{claim}
The following inequalities hold for any sequence as (\ref{cmut}):
$$\ord(\m_rW^s)(x)\leq \Hord^{(d-e)}(\G_r)(x)\leq \Ord^{(d-e)}(\G_r)(x)$$
and all $x\in\Sing(\G_r)$. 
\end{claim}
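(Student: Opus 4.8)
The rightmost inequality $\Hord^{(d-e)}(\G_r)(x)\leq \Ord^{(d-e)}(\G_r)(x)$ is exactly the Claim of \ref{pRoof}, so the only new content is the left-hand inequality $\ord(\m_rW^s)(x)\leq \Hord^{(d-e)}(\G_r)(x)$. The plan is to argue by induction on the length $r$ of the sequence (\ref{cmut}), comparing at each step the order of the monomial algebra $\m_iW^s$ (supported on the exceptional divisors) with the value of the H-function.

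First I would fix a transversal $\beta:V^{(d)}\to V^{(d')}$ with $d'=d-e$, pass to the commutative diagram (\ref{conmut}), and recall that $\Hord^{(d-e)}(\G_i)$ is, by definition, a function attached to $\G_i$ via its elimination algebra $(\R_{\G,\beta})_i=\R_{\G_i,\beta_i}$ over $V_i^{(d-e)}$; in particular it only depends on the sequence, not on $\beta$ (intrinsicness, \cite[Theorem 10.1]{BV3}). The base case $r=0$ holds trivially, since $\m_0W^s=\calo_{V^{(d-e)}}[W^s]$ has order $0$ and the H-function is $\geq 0$ — or, better, one reduces immediately to the statement at the generic point $\xi_{Y_i}$ of each successive center, which is where the exponents $h_{i+1}$ are read off. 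The key local computation is at a point $x\in\Sing(\G_{i+1})$ lying over a point $x'\in\Sing(\G_i)$: the exceptional divisor $H_{i+1}$ introduced by $\pi_{\beta_i(Y_i)}$ appears in $\m_{i+1}W^s$ with exponent $h_{i+1}$ determined by $\Hord^{(d-e)}(\G_i)(\xi_{Y_i})-1$, and I would use the behavior of the order function under a monoidal transformation together with the fact that $\Hord^{(d-e)}$ drops by exactly $1$ along the exceptional component in the way prescribed (this is the "transformation law" for the H-function established in \cite{BeV1}). Summing the contributions of $H_1,\dots,H_{i+1}$ weighted by the orders of vanishing along each, and using that the strict transforms of the $H_j$ meet with normal crossings, one gets $\ord(\m_{i+1}W^s)(x)\le \Hord^{(d-e)}(\G_{i+1})(x)$ from the inductive hypothesis at stage $i$.

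The main obstacle I anticipate is precisely controlling how $\Hord^{(d-e)}$ transforms under a single blowup in positive characteristic, where — as the introduction stresses — this function is not upper semi-continuous and admits no clean local description à la (\ref{ordez}). One cannot simply invoke the characteristic-zero identity $\Hord^{(d-e)}=\Ord^{(d-e)}$; instead the argument must go through the "tamed" construction of \cite{BeV1}, i.e., one must verify that the exponents $h_i$ are chosen so that $\m_iW^s$ is genuinely dominated by the H-function at every point, not merely at the generic points of the centers. Concretely, the subtle point is passing from the equality $h_{i+1}/s=\Hord^{(d-e)}(\G_i)(\xi_{Y_i})-1$ at the generic point to the inequality at an arbitrary closed point $x\in\Sing(\G_{i+1})$; here upper semi-continuity of $\Ord^{(d-e)}$ (hence the fact that $\Hord^{(d-e)}(\G_i)(\xi_{Y_i})\geq \Hord^{(d-e)}(\G_i)(x')$ is \emph{not} automatic) forces one to combine the semi-continuity of the roof function $\Ord^{(d-e)}$ with the transformation rule for $\Hord^{(d-e)}$, absorbing the possible discrepancy into the exceptional exponents. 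Once that bookkeeping is in place the induction closes, and the right-hand inequality is quoted directly from the Claim in \ref{pRoof}.
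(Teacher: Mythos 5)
Your decomposition is reasonable: the right-hand inequality is the Claim of \ref{pRoof}, and is in any case immediate once one has Definition \ref{corolvord} together with (\ref{eqslgen}) or (\ref{pHord}), since $\Hord^{(d-e)}(\G_r)(x)$ is a minimum over a set that contains $\ord((\R_{\G,\beta})_r)(\beta_r(x))=\Ord^{(d-e)}(\G_r)(x)$. The problem is the left-hand inequality. You correctly isolate the subtle point --- passing from the defining equality $h_{i+1}/s=\Hord^{(d-e)}(\G_i)(\xi_{Y_i})-1$ at the generic point of the center to an inequality at an arbitrary $x\in\Sing(\G_{r})$ --- but the mechanism you propose for closing it does not work. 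Upper semi-continuity of the roof function $\Ord^{(d-e)}$ only bounds $\Hord^{(d-e)}$ from \emph{above}, which is useless for the lower bound you need; moreover the parenthetical inequality $\Hord^{(d-e)}(\G_i)(\xi_{Y_i})\geq \Hord^{(d-e)}(\G_i)(x')$ that you flag is oriented the wrong way for anything semicontinuity could give. ``Summing the contributions of $H_1,\dots,H_{i+1}$'' is likewise not an argument that the resulting order is dominated by the H-function away from the generic points of the exceptional components; as written, your induction does not close.

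The missing idea --- and the one the paper itself relies on (see Remark \ref{rmksmc} and the opening of the proof of Theorem \ref{thmsmc}, which is where this Claim is actually justified) --- is a \emph{divisibility} statement. One proves by induction on the length of the sequence that $\m_rW^s$ divides, up to integral closure, each coefficient $a^{(i)}_{j_i}W^{j_i}$ of a simplified ($p$-)presentation of $\G_r$ in normal form, and also divides the elimination algebra $(\R_{\G,\beta})_r$. The inductive step is exactly where (\ref{eqhmon}) enters: since $\Hord^{(d-e)}(\G_i)(\xi_{Y_i})$ is the minimum of the $\nu_{\xi_{Y_i}}(a^{(i)}_{j})/j$ and of $\ord((\R_{\G,\beta})_i)(\xi_{Y_i})$, every coefficient has order at least $j\cdot\Hord^{(d-e)}(\G_i)(\xi_{Y_i})$ along the center, so its transform (the coefficient divided by $I(H_{i+1})^{j}$, per the transformation law of presentations and of Rees algebras) remains divisible by $I(H_{i+1})^{jh_{i+1}/s}$; the same law propagates the divisibility of the elimination algebra. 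Once this divisibility is in hand, the pointwise inequality is immediate from (\ref{eqslgen}) or (\ref{pHord}): every term entering the minimum that defines $\Hord^{(d-e)}(\G_r)(x)$ has order at least $\ord(\m_rW^s)(x)$ at $\beta_r(x)$, hence so does the minimum. In short, the generic-point data is converted into a pointwise bound through divisibility of the ingredients of the presentation, not through any semicontinuity of the H-function or of the roof function; you should restructure the induction around that statement (as is done in \cite{BeV1} for $e=1$).
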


So at least our untraceable H-function fits between two upper semi-continuous functions. Next Theorem gives conditions under which the H-function has a nice tame behavior, which will lead us to resolution.

\begin{theorem}\label{IntroTH}
Consider a sequence of transformations (\ref{conmut}) with the property in (\ref{mon}), namely that the elimination algebra is monomial. 
Set $\m_rW^s$ as in (\ref{leqmn}). If for any $x\in\Sing(\G_r)$
\begin{equation}\label{eqTHs}
 \Hord^{(d-e)}(\G_r)(x)=\ord(\m_rW^s)(x)\quad\hbox{ or }\quad \Hord^{(d-e)}(\G_r)(x)= \Ord^{(d-e)}(\G_r)(x),
 \end{equation}
then the combinatorial resolution of $\m_rW^s$ can be lifted to a resolution of $\G_r$.
\end{theorem}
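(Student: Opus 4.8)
\smallskip
\noindent\emph{Sketch of the argument.} By hypothesis the elimination algebra $(\R_{\G,\beta})_r$ is the monomial algebra (\ref{mon}), and $\m_rW^s$ of (\ref{leqmn}) is monomial by construction; both are supported on the normal crossings divisor $\{H_1,\dots,H_r\}$ of $V_r^{(d-e)}$. A monomial algebra on a normal crossings divisor carries a canonical \emph{combinatorial resolution}: a finite sequence of monoidal transformations whose centers are smooth closed strata $\bigcap_{i\in I}H_i$ (and their strict transforms), chosen by the usual combinatorial rule on the exponents, after which the singular locus is empty. The plan is to lift this sequence, blow-up by blow-up, to a sequence of transformations of $\G_r$, and then to check that the result is a resolution of $\G_r$.

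\smallskip
\noindent\textbf{Lifting.} The mechanism is Proposition \ref{elim} together with \ref{cuad}. A smooth combinatorial center $C=\bigcap_{i\in I}H_i\subset V_j^{(d-e)}$ pulls back under the transversal morphism $\beta_j\colon V_j^{(d)}\to V_j^{(d-e)}$ to $\widetilde C:=\beta_j^{-1}(C)=\bigcap_{i\in I}\beta_j^{-1}(H_i)$, which is again a normal crossings stratum of exceptional components, now in $V_j^{(d)}$. If $\widetilde C\subset\Sing(\G_j)$ one may blow it up; by part (4) of Proposition \ref{elim} and \ref{cuad} this prolongs the diagram (\ref{conmut}), $\beta_{j+1}$ is again transversal, the new elimination algebra is $\R_{\G_{j+1},\beta_{j+1}}$, and both it and $\m_{j+1}W^s$ are again monomial on the enlarged exceptional divisor and are the transforms of the downstairs algebras. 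Hence, provided every lifted center lies in $\Sing(\G_j)$, the lifted sequence is a legitimate sequence (\ref{conmut}); normal crossings of the exceptional locus upstairs is automatic, being preserved downstairs with $\beta_j$ smooth.

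\smallskip
\noindent\textbf{Termination.} Assuming permissibility and running the lifted sequence, say of length $N$, the two alternatives in (\ref{eqTHs}) close the argument as follows. In the second case one has $\m_rW^s=(\R_{\G,\beta})_r$ (the equality $\Hord^{(d-e)}=\Ord^{(d-e)}$ forces $h_i=\alpha_i$, tracing it to the generic points of $Y_0,\dots,Y_{r-1}$), so after the combinatorial resolution $\Sing((\R_{\G,\beta})_{r+N})=\emptyset$, and since $\beta_{r+N}(\Sing(\G_{r+N}))\subset\Sing((\R_{\G,\beta})_{r+N})$ by Proposition \ref{elim}, also $\Sing(\G_{r+N})=\emptyset$. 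In the first case, after the combinatorial resolution of $\m_rW^s$ one has $\ord(\m_{r+N}W^s)<1$ everywhere; if there were $x\in\Sing(\G_{r+N})$ then, using that $\Hord^{(d-e)}$ does not drop below $1$ on the singular locus and the inherited equality, $1\leq\Hord^{(d-e)}(\G_{r+N})(x)=\ord(\m_{r+N}W^s)(x)<1$, absurd. Either way the lifted sequence is a resolution of $\G_r$.

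\smallskip
\noindent\textbf{Main obstacle.} What remains, and is the step I expect to be the technical heart, is \emph{permissibility}: I would need to show that each lifted combinatorial center $\widetilde C=\beta_j^{-1}(C)$ sits inside $\Sing(\G_j)$, and that the equality of (\ref{eqTHs}) is inherited by $\G_j$ so that the argument can be iterated. The inclusion $\Sing(\G_j)\subset\beta_j^{-1}\bigl(\Sing(\cdot)\bigr)$ into the preimage of the singular locus of the relevant monomial algebra is general (Proposition \ref{elim}); what is delicate is the reverse inclusion \emph{along the exceptional stratum} $\widetilde C$, which can fail in positive characteristic and is precisely what (\ref{eqTHs}) should buy: the vanishing of the defect $\Ord^{(d-e)}-\Hord^{(d-e)}$ (respectively $\Hord^{(d-e)}-\ord(\m W^s)$) forces the order of $\G_j$ along $\widetilde C$ to be governed entirely by the combinatorics of the exceptional exponents, hence to reach the singular threshold there, so $\widetilde C\subset\Sing(\G_j)$. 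One then has to check, through the transformation laws of $\Hord^{(d-e)}$, of the elimination algebra, and of $\m_\bullet W^s$ under a blow-up at such an exceptional stratum --- where the new exceptional contribution is purely combinatorial --- that (\ref{eqTHs}) is stable. Controlling $\Hord^{(d-e)}$, which is a priori not upper semicontinuous, through these blow-ups is the crux of the whole argument.
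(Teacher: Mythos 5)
Your proposal correctly identifies the overall strategy (lift the combinatorial resolution of $\m_rW^s$ through the transversal projections and use the bounds on $\Hord^{(d-e)}$ to conclude that the singular locus empties out), and your reduction of the second alternative in (\ref{eqTHs}) to the first via $\m_rW^s=(\R_{\G,\beta})_r$ matches Remark \ref{rmksmc}. But there is a genuine gap, and you name it yourself: permissibility of the lifted centers and stability of (\ref{eqTHs}) under the blow-ups are exactly the content of the theorem, and your sketch leaves them unproved. The paper closes this gap with $p$-presentations $p\P(\beta,z_i,f_{p^{\ell_i}}(z_i))$ in normal form: in the case $\Hord^{(d-e)}(\G_r)(x)=\ord((\R_{\G,\beta})_r)(\beta_r(x))$ one gets $z_iW\in\G_r$ up to integral closure, i.e.\ genuine maximal contact with $\{z_1=\dots=z_e=0\}$, and the characteristic-zero argument applies; in the case $\Hord^{(d-e)}(\G_r)(x)<\ord((\R_{\G,\beta})_r)(\beta_r(x))$ the equality with $\ord(\m_rW^s)$ forces the constant coefficient $a_{p^{\ell_j}}^{(j)}W^{p^{\ell_j}}$ of a single distinguished index $j$ to have the same integral closure as $\m_rW^s$ (hence to be a monomial up to a unit), and one checks that this same index computes the H-function at $x$ and at the generic point of any permissible center through $x$, stably under transformations. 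This reduces everything to the one-polynomial case already settled in \cite{BeV1}. Without some such mechanism, "the equality forces the order of $\G_j$ along $\widetilde C$ to be governed by the exceptional combinatorics" remains an assertion, not a proof, precisely because $\Hord^{(d-e)}$ is not upper semi-continuous.

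A secondary but real error: your lifted center $\widetilde C:=\beta_j^{-1}(C)$ cannot be the right object. Since $\beta_j$ is smooth of relative dimension $e\geq 1$ and $\beta_j|_{\Sing(\G_j)}$ is a set-theoretic bijection onto its image (Proposition \ref{elim}(2)), the full preimage $\beta_j^{-1}(C)$ is never contained in $\Sing(\G_j)$. The correct lift is the unique subscheme of $\Sing(\G_j)$ mapping isomorphically onto $C$ (Proposition \ref{elim}(3), Lemma \ref{ZML}), which in coordinates is the section $\{z_1=\dots=z_e=0\}$ over $C$; identifying it, and showing it is smooth and permissible, again requires the presentation machinery.
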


\begin{definition}\label{IntroDef}
 $\G_r$ is said to be in the \emph{strong monomial case} (or say $\Hord^{(d-e)}(\G_r)$ is \emph{tamed}) when the setting of the previous theorem holds, namely one of the equalities in (\ref{eqTHs}) holds at any $x\in\Sing(\G_r)$. 
\end{definition}

This Theorem extends \cite[Theorem 8.13]{BeV1}, where the case $e=1$ is treated. This theorem and its proof will be address here in Theorem \ref{thmsmc} (see also Remark \ref{rmksmc}).
\end{parrafo}

\begin{parrafo}{\bf On the definition of the functions $\Hord^{(d-e)}$}\label{pord}.

We have indicated, so far, that these H-functions can be bounded between two upper semi-continuos functions. 
The results in this paper aim to the explicit  computation of the function in positive characteristic. This will be achieved through the notion of \emph{presentations}. These, in turn, will give us explicit formulations which will enable us to obtain our results, such as 
Theorem \ref{IntroTH}.

To clarify our strategy, let us first indicate what is known in the case $d-e=d-1$. Given (\ref{conmut})), and locally at any point $x\in\Sing(\G_r)$, a presentation, say
\begin{equation}\label{inpre}
\P=\P(\beta_r,z,f_n(z))
\end{equation}
is defined, where:
\begin{itemize}
\item $\{z=0\}$ is a $\beta_r$-section (i.e.,$\{z=0\}$ is a section of $\beta_r:V_r^{(d)}\longrightarrow V_r^{(d')}$).
\item  $f_n(z)=z^n+a_1z^{n-1}+\dots+a_n\in\calo_{V^{(d-1)}}[z]$ and $f_n(z)W^n\in\G_r$.
\end{itemize}
Moreover, a presentation can be defined at $x$ so that:
\begin{equation}\label{inVOR}
\Hord^{(d-1)}(\G_r)(x)=\min_{1\leq j\leq n}\Big\{\frac{\nu_{\beta_r(x)}(a_j)}{j},\ord((\R_{\G,\beta})_r)(\beta_r(x))\Big\}.
\end{equation}

Presentations of the form (\ref{inpre}) were the tool which lead us to the proof of Theorem \ref{thmqG} for $e=1$ (see  \cite[Theorem A.7]{BeV1}). 
We shall indicate in \ref{tau2} that a straightforward consequence of the previous result is the following: 

Suppose that $d'=d-e$ now for $e>1$. Locally at any $x\in\Sing(\G_r)$, there is a presentation, say
$$\P=\P(\beta_r,z_1,\dots,z_e,f_{n_1}(z_1),\dots,f_{n_e}(z_e))$$
where:
\begin{itemize}
\item $\{z_1=\dots=z_e=0\}$ is a $\beta_r$-section, 
\item  $f_{n_i}(z_i)W^{n_i}\in\G_r$, and for each index, they have of the form:
\begin{equation}\label{pol}
\begin{array}{l}
f_{n_1}(z_1)=z_1^{n_1}+a_1^{(1)}z_1^{n_1-1}+\dots+a_{n_1}^{(1)}\in\calo_{V^{(d-1)}}[z_1],\\
f_{n_2}(z_2)=z_2^{n_2}+a_1^{(2)}z_2^{n_2-2}+\dots+a_{n_2}^{(2)}\in\calo_{V^{(d-2)}}[z_2],\\
\ \ \ \vdots\\
f_{n_e}(z_r)=z_r^{n_e}+a_1^{(e)}z_r^{n_e-1}+\dots+a_{n_e}^{(e)}\in\calo_{V^{(d-r)}}][z_e],
\end{array}
\end{equation}
\end{itemize}

However, this form of presentation falls short to provide an expression similar to that in (\ref{inVOR}). This weakness is due to the fact that the coefficients are not in dimension $d-e$.

In this paper we overcome this difficulty. We prove the existence of  \emph{simplified presentations}, say
$$s\P=s\P(\beta_r,z_1,\dots,z_e,f_{n_1}(z_1),\dots,f_{n_e}(z_e)),$$
with the additional condition:
\begin{equation}\label{pol2}
\begin{array}{l}
f_{n_1}(z_1)=z_1^{n_1}+a_1^{(1)}z_1^{n_1-1}+\dots+a_{n_1}^{(1)}\in\calo_{V^{(d-e)}}[z_1],\\
f_{n_2}(z_2)=z_2^{n_2}+a_1^{(2)}z_2^{n_2-2}+\dots+a_{n_2}^{(2)}\in\calo_{V^{(d-e)}}[z_2],\\
\ \ \ \vdots\\
f_{n_e}(z_r)=z_r^{n_e}+a_1^{(e)}z_r^{n_e-1}+\dots+a_{n_e}^{(e)}\in\calo_{V^{(d-e)}}][z_e].
\end{array}
\end{equation}
Note that, as oppose to (\ref{pol}), all coefficients are in dimension $d-e$, i.e., $a_{j_i}^{(i)}\in\calo_{V^{(d-e)}}$. This enables us to extend the previous assertion. Namely that
the value of the H-function at $x$ is:
$$\Hord^{(d-e)}(\G_r)(x)=\!\!\!\!\min_{
\begin{array}{c}
\scriptstyle 1\leq j_i\leq n_i,\\
\scriptstyle 1\leq i\leq e
\end{array}}\!\!\!\!\Big\{\frac{\nu_{\beta_r(x)}(a_{j_i}^{(i)})}{j_i},\ord((\R_{\G,\beta})_r)(\beta_r(x))\Big\}.$$

The construction of this kind of presentations appears in Theorem \ref{sepvar}. Simplified presentations will allow us to view each equation in (\ref{pol2}) as a polynomial in one distinct variable. This enables us to extend arguments known for $d'=d-1$ to arbitrary $d'$.

In particular, simplified presentations  enable us to extend the main results in \cite{BeV1}, and give numerical conditions in terms of the H-functions, under which the sequence (\ref{conmut}) can be easily extended to a resolution.

The main tool used throughout this paper are the differential Rees algebras and their properties. The hope is that a better understanding of these algebras will lead to the conditions in Theorem \ref{IntroTH}, and hence to resolution of singularities. Here we mean resolution {\em a la Hironaka}, namely by successive blow-ups along centers included in the closed Hilbert-Samuel stratum (i.e., not in the simplified form 
introduced in \cite{EncVil99}).

 As an easy consequence, this will provide a simple proof of embedded resolution,  {\em a la Hironaka}, of $2$-dimensional scheme following the arguments in \cite{BeV1}. This last result has also been obtained by others authors in \cite{Ab1}, \cite{CJS} (arithmetic case!), \cite{Cut3}, \cite{HaWa}.
\end{parrafo}

\section{Weak equivalence}\label{sec3}

\begin{parrafo}
The word invariant was used in the previous discussion. In this section we specify the meaning of {\em invariant} at a point. The clarification of this concept is important if we expect to apply them in a useful manner. Particularly to distinguish the discussion 
in this paper, which aims at a refinement of the Hilbert Samuel stratification, from the previous works directed to a refinement of the notion of multiplicity.

Here an invariant of a singular point will be a value assigned to it, and subject to very precise and rigid conditions. It will be proved that the value of the H-function at a singular fulfills this conditions, and hence define an invariant of the singular point.

Briefly speaking, the conditions impose some form of compatibility with monoidal transformations. The notion of invariant grows from an equivalence relation defined among the class of Rees algebras over $V^{(d)}$.
\end{parrafo}

\begin{parrafo} Fix on the smooth scheme $V^{(d)}$  a set of smooth hypersurfaces with normal crossings, say $E=\{H_1,\dots,H_r\}$. Let $\G=\bigoplus I_nW^n$ be a Rees algebra over $V^{(d)}$. Let now
\begin{equation}\label{eqPB}
V^{(d)}\overset{\pi}{\longleftarrow}U
\end{equation}
be a smooth morphism.

There is a natural notion of pull-backs of the Rees algebra $\G$ and the set $E$, say to a Rees algebra $\G_U$ and a set $E_U$. Here $E_U$ is the set of the pull-backs of the hypersurfaces in $E$. The Rees algebra $\G_U$ is defined by: $\G_U=\bigoplus (I_n)_UW^n$, the natural lift of $\G$ to $U$. This will be noted by 
$$\xymatrix@R=.1cm{
V^{(d)} & U\ar[l]_{\ \ \ \pi}\\
\G, E & \G_U,E_U
}$$
Observe here that the singular locus of the Rees algebra $\G$ is compatible with pull-backs, i.e.,
$$\Sing(\G_U)=\pi^{-1}(\Sing(\G)).$$

\begin{definition}
Given $\G$ and $E$ as above, a \emph{local sequence} of $\G$ and $E$ is a sequence
$$\xymatrix@R=.1cm{
V^{(d)} & \widetilde{V}^{(d)}_1\ar[l]_{\ \ \pi_1} &\dots \ar[l]_{\ \ \pi_2} & \widetilde{V}^{(d)}_r\ar[l]_{\ \pi_r}\\
\G, E & \G_1,E_1 & & \G_r, E_r
}$$
where each $\xymatrix{\widetilde{V}_i^{(d)} & \widetilde{V}^{(d)}_{i+1}\ar[l]_{\ \ \ \pi_{i+1}}}$ is either a pull-back or a monoidal transformation along a center $C_i\subset\Sing(\G_i)$ which has  normal crossing with the union of the exceptional hypersurfaces in $E_i$, for $i=0,\dots,r-1$. In this latter case we set $E_{i+1}$ to be the union of the strict transforms of hypersurfaces in   $E_{i}$, together with the the exceptional locus of the monoidal transformation. Here we denote $\widetilde{V}_0^{(d)}=V^{(d)}$.
\end{definition}
\end{parrafo}

\begin{parrafo}

Fix a smooth scheme $V^{(d)}$ and a Rees algebra $\G=\bigoplus I_nW^n$. Note that the closed set attached to $\G$, say $\Sing(\G)$, coincides with that attached to the Rees algebra $\G^{(2)}=\bigoplus I_{2n}W^{2n}$, i.e., $\Sing(\G)=\Sing(\G^{(2)})$. 
Moreover, the same holds after an arbitrary local sequence of ($V^{(d)}$, $\G$,$\varnothing$) and $(V^{(d)},\G^{(2)},\varnothing)$.This example leads to a more general definition, the notion of weak equivalence.

\begin{definition}
Fix two Rees algebras $\G$ and $\G'$ and a set of exceptional hypersurfaces $E$  in the smooth scheme $V^{(d)}$. We say that $\G$ and $\G'$ are \emph{weakly equivalent} if:
\begin{enumerate}
\item[(i)] $\Sing(\G)=\Sing(\G')$.
\item[(ii)] A local sequence of $\G$, say:
$$\xymatrix@R=.1cm{
V^{(d)} & \widetilde{V}^{(d)}_1\ar[l]_{\ \ \pi_1} &\dots \ar[l]_{\ \ \pi_2} & \widetilde{V}^{(d)}_r\ar[l]_{\ \pi_r}\\
\G, E & \G_1,E_1 & & \G_r, E_r
}$$
defines a local sequence of $\G'$ (and vice versa), and $\Sing(\G_i)=\Sing(\G_i')$ for $i=0,\dots,r$.
\end{enumerate}
\end{definition}

\begin{remark}
Note that if $\G$ and $\G'$ are weakly equivalent as before, then also their transforms $\G_r$ and $\G_r'$ are weakly equivalent. So the weak equivalence is preserved by a local sequence.
\end{remark}

\begin{theorem}\label{rmkweq}\

\begin{enumerate}
\item Fix two Rees algebras $\G$ and $\G'$ with the same integral closure. Then, $\G$ and $\G'$ are weakly equivalent.

\item Fix a Rees algebra $\G$. Let $Diff(\G)$ be the differential Rees  algebra attached to $\G$ (see \ref{rp23}). Then $\G$ and $Diff(\G)$ are weakly equivalent.
\end{enumerate}
\end{theorem}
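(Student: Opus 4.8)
The plan is to prove the two parts of Theorem \ref{rmkweq} separately, both by showing that the singular locus is invariant in a way that propagates through local sequences. For part (1), suppose $\G$ and $\G'$ have the same integral closure $\overline{\G}=\overline{\G'}$. First I would recall the standard fact that $\Sing(\G)=\Sing(\overline{\G})$: the condition $\nu_x(I_n)\geq n$ for all $n$ is unchanged by passing to the integral closure, since an element integral over $\G$ of degree $n$ satisfies a relation forcing its order at any $x\in\Sing(\G)$ to be at least $n$, and conversely $\G\subset\overline\G$. This gives condition (i). For condition (ii), the key point is that integral closure commutes with the two admissible operations in a local sequence: pull-back by a smooth morphism, and transform by a permissible monoidal transformation. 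For pull-backs this is essentially flat base change; for monoidal transformations with center $C\subset\Sing(\G)=\Sing(\G')$ one checks that the factorization $I_n\calo_{V_1}=I(H_1)^nI_n^{(1)}$ is compatible with integral closure, so that $\overline{\G_1}=\overline{(\overline\G)_1}=\overline{(\overline{\G'})_1}=\overline{\G_1'}$. Then I would argue by induction on the length $r$ of the local sequence: at each stage the centers available for $\G_i$ are exactly those contained in $\Sing(\G_i)=\Sing(\G_i')$, hence the same for both, and applying $\Sing=\Sing\circ\overline{(\ \cdot\ )}$ at each stage gives $\Sing(\G_i)=\Sing(\G_i')$ throughout.

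For part (2), the inclusion $\G\subset Diff(\G)$ is automatic from the construction, so $Diff(\G)$ defines a local sequence whenever $\G$ does, once we know the singular loci agree. The equality $\Sing(\G)=\Sing(Diff(\G))$ is exactly property (2) of the Proposition in \ref{rp23}, and the compatibility with transforms — i.e. that the transform of $Diff(\G)$ is again the differential saturation of the transform of $\G$, or at least that it has the same singular locus — follows from property (3) of that same Proposition, which states precisely that the equality $\Sing(\G)=\Sing(Diff(\G))$ is preserved by transformations. So again I would induct on the length of the local sequence: given a local sequence of $\G$, the fact that each $\G_i$ and $Diff(\G)_i$ (the corresponding transform of $Diff(\G)$) have equal singular locus lets each admissible center pass from one side to the other, and the converse direction is symmetric because any local sequence of $Diff(\G)$ is a priori a local sequence of the smaller algebra $\G$ once the singular loci are checked to coincide step by step. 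The only subtlety to address carefully is whether the transform of $Diff(\G)$ literally equals $Diff(\G_1)$ or merely shares its singular locus; for the statement of weak equivalence only the latter is needed, and it is furnished by \cite[Theorems 3.2 and 4.1]{VV1} cited in \ref{rp23}.

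The main obstacle, such as it is, lies in part (1): making precise the claim that integral closure of Rees algebras is compatible with monoidal transformations along permissible centers, and that $\Sing(\G)=\Sing(\overline\G)$ including after transforms. This is where one must be careful about which notion of integral closure is used (integral closure in $\calo_{V^{(d)}}[W]$ as a graded ring), and about the fact that permissibility of a center for $\G$ versus $\overline\G$ must be matched. I expect this to be handled by reducing to the ideal-theoretic statement: for a single ideal $I$ and its integral closure $\overline I$, one has $\nu_x(I)=\nu_x(\overline I)$, and the Rees algebra of $\overline I$ transforms compatibly — both are classical. The second part is essentially a citation of \ref{rp23} combined with the same inductive bookkeeping, so it carries no real difficulty beyond organizing the induction cleanly.
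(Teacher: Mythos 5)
Your proposal is correct and follows essentially the same route as the paper, which simply delegates part (1) to \cite[Proposition 5.4]{EncVil06} and part (2) to Giraud's Lemma (cf.\ \cite[Theorem 4.1]{EncVil06}); your sketch reconstructs precisely the standard content of those references, namely that $\Sing$ and permissibility of centers are insensitive to integral closure and that this persists through pull-backs and transforms, and that the transform of $Diff(\G)$, while not necessarily differentially saturated, is squeezed between $\G_1$ and $Diff(\G_1)$ so the singular loci all coincide. The one step you leave as ``one checks'' --- compatibility of integral closure with the factorization $I_n\calo_{V_1}=I(H_1)^nI_n^{(1)}$ --- is exactly the technical heart of the cited Proposition 5.4, so nothing is missing beyond what the paper itself outsources.
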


\begin{proof}
\begin{enumerate}
\item See \cite[Proposition 5.4]{EncVil06}.
\item Follows from Giraud's Lemma. See also \cite[Theorem 4.1]{EncVil06}.
\end{enumerate}
\end{proof}
\end{parrafo}

\begin{parrafo}
We address here the notion of invariant, as needed in our development. Given $\G$ over $V^{(d)}$, an {\em invariant} attached to $x\in \Sing(\G)$ is a value, say
$\gamma(x, \G),$
which is subject to the condition:
$$(*)\qquad\gamma(x, \G)=\gamma(x, \G')$$
whenever $\G$ and $\G'$ are weakly equivalent when restricted to some neighborhood of $x$.

\vspace{0.2cm}

A first example of invariant is that of the value $\tau_{\G}(x)$ in \ref{tauG}.  It will be proved here in Theorem \ref{theo} (1) that the value $\Hord^{(r)}(\G)(x)$ is an {\em invariant}, for any point $x\in \Sing(\G)$. 
\end{parrafo}

\section{The slope of a hypersurface and the weak equivalence}\label{sec5}
In this section, we restrict attention to the value of the $d-1$-dimensional H-function at a point in the highest multiplicity locus of a hypersurface. In such context, this value is known as the \emph{slope}. We present here the slope as a refinement of the multiplicity at the point (see Definition \ref{dfsl}).
We firstly define it in terms of a transversal projection, and then we show that it is an invariant and hence independent of the choice of such projection (Theorem \ref{thmfn}).

\begin{parrafo}\label{par12}{\bf Slope of a monic polynomial}. 
Fix a hypersurface $X$ embedded in a smooth scheme $V^{(d)}$ and a closed  $n$-fold point  $x$ of $X$ (i.e., a point of multiplicity $n$). After suitable restriction, in \'etale topology, a smooth morphism $V^{(d)}\overset{\beta}{\longrightarrow} V^{(d-1)}$ can be defined so that $X$ is expressed by a monic polynomial of degree $n$, of the form 
\begin{equation}\label{eq3moni}
f_n(z)=z^n+a_1z^{n-1}+\dots+a_n\in\calo_{V^{(d-1)}}[z];
\end{equation}
where $z$ is a global section of $\calo_{V^{(d)}}$ and $z=0$ is a $\beta$-section. We abuse notation and say that $z$ is a transversal section of $\beta$.
This says that, after suitable restriction in \'etale topology, both at $x$ and $\beta(x)$, the restricted map $\beta|_{X}$:
\begin{equation}\label{eqtri}
\xymatrix@R=2.1pc{
X\ar@{^{(}->}[r]\ar[rd]_{\ \ \ \ \beta|_X}& V^{(d)}\ar[d]^{\beta}\\
 & V^{(d-1)}
}
\end{equation}
is finite. So locally at any $y\in V^{(d-1)}$ we may view $X$ as defined by the monic polynomial  in (\ref{eq3moni}). In particular, the fiber over $y$, say $\beta^{-1}(y)$, is given by $\overline{f}_n(z)=z^n+\bar{a}_1z^{n-1}+\dots+\bar{a}_n\in k(y)[z]$, where $k(y)$ is the residue field at the point.

Let $f_nW^n$ denote the Rees algebra $\calo_{V^{(d)}}[f_n(z)W^n]$. Note that $\Sing(f_nW^n)$ is the set of $n$-fold points of the hypersurface. Here we assume that $\Sing(f_nW^n)$ has no components of codimension one in $V^{(d)}$ (namely that $f_n(z)\neq z^n$ for any expression as (\ref{eq3moni})).

\begin{definition}\label{dfsl}
Assume, as before, that $X$ is defined by the monic polynomial in (\ref{eq3moni}). The \emph{slope of $f_n(z)W^n$ at $y\in V^{(d-1)}$} is  the rational number
\begin{equation}\label{slopef}
Sl(f_n(z)W^n)(y)=\min_{1\leq j\leq n}\Big\{\frac{\nu_y(a_j)}{j}\Big\}.
\end{equation}
\end{definition}

Geometrically, the slope defined by the equation (\ref{eq3moni}) is the biggest rational number $q$ so that all pairs $(\nu_y(a_j),n-j)$ lie above the line through $(0,n)$ and $(nq,0)$:

\begin{center}
\setlength{\unitlength}{1mm}
\begin{picture}(130,60)
\allinethickness{1pt}

\put(10,0){\line(0,1){55}}
\put(10,0){\line(1,0){110}}
\put(10,50){\line(2,-1){100}}

\put(9,49.9){\line(1,0){2}}
\put(7,49.9){\makebox(0,0){$n$}}

\put(9,40.9){\line(1,0){2}}
\put(4,40.9){\makebox(0,0){$n-1$}}

\put(9,24.8){\line(1,0){2}}
\put(3.5,24.8){\makebox(0,0){$n-\ell$}}

\put(9,10.9){\line(1,0){2}}
\put(3.5,10.9){\makebox(0,0){$n-j$}}

\put(9,0){\line(1,0){2}}
\put(7,0){\makebox(0,0){$0$}}

\put(110,-1){\line(0,1){2}}
\put(110,-3){\makebox(0,0){$nq$}}

\allinethickness{0.6pt}
\put(9,40.9){\line(1,0){28.3}}
\put(9,10.9){\line(1,0){93.3}}
\put(9,24.8){\line(1,0){51.3}}

\allinethickness{1pt}
\put(37.3,40.9){\circle*{0.8}}
\put(50.6,41.9){\makebox(0,0){$(\nu_y(a_1),n-1)$}}

\put(60.4,24.8){\circle*{0.8}}
\put(74.4,26.3){\makebox(0,0){$(\nu_y(a_\ell),n-\ell)$}}
\put(102.3,10.9){\circle*{0.8}}
\put(116,12.4){\makebox(0,0){$(\nu_y(a_j),n-j)$}}
\put(120.3,0){\circle*{0.8}}
\put(132.4,0.4){\makebox(0,0){$(\nu_y(a_n),0)$}}

\allinethickness{0.6pt}
\put(17.5,38.5){\vector(-1,0){7.5}}
\put(19,38.5){\makebox(0,0){$q$}}
\put(20.2,38.5){\vector(1,0){7.5}}

\end{picture}

\vspace{0.3cm}
\end{center}

Changes on the variable $z$ imply changes on (\ref{eq3moni}), and hence on the value (\ref{slopef}). We aim to find the biggest possible value of the slope at the fixed point $y$. A first step in this direction will be address in Remark \ref{rmk44pe}.

 The following technical lemma and remark gather some results to be used in this and further sections, needed to establish the optimality of the rational number.
 
\begin{lemma}(Zariski's Multiplicity Lemma)\label{ZML}. Let $f_n$ be as in (\ref{eq3moni}), defining $X$ as in (\ref{eqtri}).

(1) Fix $y\in\beta(\Sing(f_nW^n))$. Then $\bar{f}_n(z)=(z-\overline{\alpha})^n\in k(y)[z]$, for a suitable  $\overline{\alpha}\in k(y)$, the residue field at $\calo_{V^{(d-1)},y}$. In particular,  the fiber $\beta^{-1}(y)$ has a unique point which is rational.

(2) Assume that $C\subset\Sing(f_nW^n)$ is smooth and irreducible. $\beta(C)$ is smooth and moreover $\beta|_C$ induces an isomorphism $\beta|_C:C\overset{\cong}{\longrightarrow}\beta(C)$ (see Proposition \ref{elim} (3)). In particular, if $V^{(d-1)}$ is affine, there is a global section $\alpha\in\calo_{V^{(d-1)}}$ so that $(z-\alpha)\in I(C)\subset\calo_{V^{(d)}}$.

(3) Suppose that $C'$ is a smooth and irreducible component of $\beta(\Sing(f_nW^n))$. There is a unique irreducible component $C\subset\Sing(f_nW^n)$ such that $\beta(C)=C'$. Moreover, $C$ and $C'$ are isomorphic and hence $C$ is smooth.

\end{lemma}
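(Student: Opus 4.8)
The plan is to establish (1) directly, from the definition of an $n$-fold point together with an elementary fact on orders in regular local rings, and then to deduce (2) and (3) from (1) and from Proposition~\ref{elim}. Throughout we replace $\calo_{V^{(d)}}[f_n(z)W^n]$ by its differential closure $\G:=Diff\big(\calo_{V^{(d)}}[f_n(z)W^n]\big)$; by \ref{rp23} this does not change the singular locus, so $\Sing(\G)=\Sing(f_n(z)W^n)$ is the set of $n$-fold points of $X$, and $\G$ is $\beta$-differential, so that Proposition~\ref{elim} applies to the transversal projection $\beta$.

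\emph{Part (1).} Fix $y\in\beta(\Sing(f_nW^n))$ and a point $x\in\Sing(f_nW^n)$ with $\beta(x)=y$, so $\nu_x(f_n)\ge n$; put $A=\calo_{V^{(d)},x}$. Since $z$ is a transversal section of $\beta$, the fibre $\beta^{-1}(y)$ is, near $x$, \'etale over $\A^1_{k(y)}=\Spec k(y)[z]$, so $\bar A:=A/\mathfrak m_yA$ is a discrete valuation ring in which the irreducible factor $p\in k(y)[z]$ of $\bar f_n$ (the image of $f_n$ in $k(y)[z]$, again monic of degree $n$) corresponding to $x$ is a uniformizer. Reduction modulo $\mathfrak m_yA\subseteq\mathfrak m_x$ cannot lower the order, so $n\le\nu_x(f_n)\le\ord_{\bar A}(\bar f_n)=e$, where $e$ is the multiplicity of $p$ in $\bar f_n$; since $\bar f_n$ is monic of degree $n$ we also have $e\cdot\deg p\le n$. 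Hence $e=n$ and $\deg p=1$, that is $\bar f_n=(z-\bar\alpha)^n$ with $\bar\alpha\in k(y)$. Consequently $(\beta|_X)^{-1}(y)=\Spec k(y)[z]/\big((z-\bar\alpha)^n\big)$ is a single point with residue field $k(y)$, hence rational, and it is the $n$-fold point of $X$ over $y$ (unique also by Proposition~\ref{elim}(2)).

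\emph{Parts (2) and (3).} That $\beta(C)$ is smooth and that $\beta|_C$ is an isomorphism $C\overset{\cong}{\longrightarrow}C':=\beta(C)$ is Proposition~\ref{elim}(3) applied to the smooth centre $Y=C\subset\Sing(\G)$. When $V^{(d-1)}$ is affine, $C'$ is closed in it, so $\Gamma(V^{(d-1)},\calo)\twoheadrightarrow\Gamma(C',\calo)$; transporting the regular function $z|_C$ along the isomorphism $\beta|_C$ yields a function on $C'$, which we lift to $\alpha\in\Gamma(V^{(d-1)},\calo)$; then $z-\beta^*\alpha$ vanishes on $C$, i.e., writing $\alpha$ for $\beta^*\alpha$, $z-\alpha\in I(C)\subset\calo_{V^{(d)}}$. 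This gives (2). For (3): by Proposition~\ref{elim}(2), $\beta|_{\Sing(\G)}$ is a bijection onto its image, and it is closed because $\beta|_X$ is finite and $\Sing(\G)$ is closed in $X$; hence it is a homeomorphism, so irreducible components correspond and $C:=(\beta|_{\Sing(\G)})^{-1}(C')$, with its reduced structure, is the unique irreducible component of $\Sing(\G)$ with $\beta(C)=C'$. The morphism $\beta|_C\colon C\to C'$ is finite (as $C\subset X$) and birational: since $k$ is perfect and $C$ is integral, its smooth locus $U$ is dense and open in $C$, $U\subset\Sing(\G)$, and Part~(2) gives $\beta|_U\colon U\overset{\cong}{\longrightarrow}\beta(U)$ with $\beta(U)$ dense open in $C'$, so $k(C)=k(C')$. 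A finite birational morphism onto the normal variety $C'$ is an isomorphism, whence $\beta|_C\colon C\overset{\cong}{\longrightarrow}C'$ and in particular $C$ is smooth.

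The computations above are light once (1) is in hand; the real content of (1) is the order inequality obtained by pushing $f_n$ into the discrete valuation ring cut out by the fibre direction. The step I expect to require most care is the end of (3): upgrading the set-theoretic bijectivity of Proposition~\ref{elim}(2) first to birationality of $\beta|_C$ — using generic smoothness of $C$ over the perfect ground field together with the isomorphism statement of Part~(2) — and then to a genuine isomorphism via normality of $C'$.
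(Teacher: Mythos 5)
Your proof is correct. For parts (2) and (3) you follow exactly the route the paper itself sketches: Proposition \ref{elim} (3) for the isomorphism $C\cong\beta(C)$ in (2), the fact that $\beta|_{\Sing(f_nW^n)}$ is a closed bijection (hence a homeomorphism) for the existence and uniqueness of $C$ in (3), and ``finite birational onto the normal variety $C'$'' for the final isomorphism. For part (1) the paper offers no argument at all, deferring to the references \cite{VV4}, \cite{VKyoto}, \cite{BV3}; your specialization argument --- the order of $f_n$ can only increase when passing to the one-dimensional regular local ring $\calo_{\beta^{-1}(y),x}$, combined with the degree count $e\cdot\deg p\le n\le e$ forcing $\deg p=1$ and $e=n$ --- is precisely the classical proof of Zariski's multiplicity lemma in this setting, so you are supplying the content the paper outsources rather than taking a different route. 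The only convention worth making explicit is that your identification $(\beta|_X)^{-1}(y)=\Spec\big(k(y)[z]/(\bar f_n)\big)$, and likewise the claim that the irreducible factor $p$ of $\bar f_n$ attached to $x$ is a uniformizer of $\calo_{\beta^{-1}(y),x}$, rest on the paper's standing assumption (Remark \ref{rmk43}) that after the \'etale restriction $V^{(d)}$ sits \'etale over $V^{(d-1)}\times\mathbb{A}^1$ and the fiber of $X$ over $y$ is read off as $\Spec\big(k(y)[z]/(\bar f_n)\big)$; since the paper adopts exactly this reading, your argument is consistent with the stated setup.
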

Further details can be found in \cite{VV4}, \cite{VKyoto} and \cite{BV3}. Note that (1) says that 
$\beta|_{\Sing(f_nW^n)}:\Sing(f_nW^n)\overset{}{\longrightarrow}\beta(\Sing(f_nW^n))$ is a set theoretical bijection, and that corresponding points have the same residue field. This is a byproduct of Zariski multiplicity Lemma. (2) and the first part of (3) follow essentially from this fact.
As for the second half of (3), note that $\beta|_C:C\overset{}{\longrightarrow}\beta(C)=C'$ is a finite birational morphism, and that $C'$ is normal.

\begin{remark}\label{rmkZ} 
\ \ (1) Fix $y\in V^{(d-1)}$. Let $\bar{f}_n(z)=z^{n}+\bar{a}_1z^{n-1}+\dots+\bar{a}_n\in k(y)[z]$ be the class of $f_n(z)$ on the fiber. Then, $Sl(f_n(z)W^n)(y)=0$ if and only if $\bar{f}_n(z)\not=z^n$.

When $y\in\beta(\Sing(f_nW^n))$, $\overline{f}_n(z)=(z-\overline{\alpha})^n$ and a change of the form $z_1=z+\gamma$ (suitable $\gamma\in\calo_{V^{(d-1)},y}$) can be defined, with the property that $\bar{\gamma}=\bar{\alpha}$ in $k(y)$. The restriction to the fiber over of $y$ is $\bar{f}'_n(z_1)=z_1^n$, and hence $Sl(f_n'(z_1))(y)>0$. 
Moreover, we claim that in this case $Sl(f'_n(z_1))(y)\geq 1$. To check this, notice that the condition $Sl(f_n'(z_1))(y)>0$ implies that there is a unique point of $X=V(f_n'(z_1))$, say $y'\in V^{(d)}$, dominating $y$. If $\{x_1,\dots, x_\ell\}$ is a regular system of parameters at $\calo_{V^{(d-1)},y}$, then $\{z_1,x_1,\dots, x_\ell\}$ is a regular system of parameters at $\calo_{V^{(d)},y'}$. As $f'_n(z_1)$ has multiplicity $n$ at $\calo_{V^{(d)},y'}$, it follows that $f_n'(z_1)=z_1^n+b_1z^{n-1}+\dots+b_n\in\id{z_1,x_1,\dots,x_\ell}^n$, and hence each $b_i\in\id{x_1,\dots,x_\ell}^i$.

\vspace{0.15cm}

(2) Fix  $x\in C\subset \Sing(f_nW^n)$ and set $\x=\beta(x)$. One can argue as in Lema \ref{ZML}, (2), to show that 
there is a change of the form $z_1=z-\alpha$ with $\alpha\in\calo_{V^{(d-1)},\x}$ so that $Sl(f'_n(z_1)W^n)(y)>0$ where $y$ denotes the generic point of $\beta(C)$. In particular, $Sl(f'_n(z_1)W^n)(y)\geq 1$.
 
\vspace{0.15cm}

(3) The previous discussion applies if we fix $x\in C'\subset\beta(\Sing(f_nW^n))$, with $C'$ smooth and irreducible. In fact,  a change of variables of the form $z_1=z+\alpha$ with $\alpha\in\calo_{C',x}$ can be considered, so that $Sl(f'_n(z_1)W^n)(y)>0$ where $y$ denotes the generic point of $C'$. Moreover, applying (1), $Sl(f'_n(z_1)W^n)(y)\geq 1$
\end{remark}

We want to give a criterium to know when $Sl(f_n(z)W^n)(y)$ is optimal. 

\begin{definition}
Fix a transversal parameter $z$ so that $f_n(z)=z^{n}+a_1z^{n-1}+\dots+a_n\in\calo_{V^{(d-1)}}[z]$ defines the hypersurface $X$ after a suitable restriction. Set $q=Sl(f_n(z)W^n)(y)$. Let $r_j=\nu_y(a_j)$ ($j=1,\dots,n$) denote the order of each coefficient at $\calo_{V^{(d-1)},y}$. Fix a regular system of coordinates at $\calo_{V^{(d-1)},y}$, say $\{x_1,\dots,x_{\ell-1}\}$. At the completion, say $\widehat{\calo}_{V^{(d-1)},y}=k(y)[[x_1,\dots,x_{\ell-1}]]$, define $f_{n}(z)=z^n+\widehat{a}_1z^{n-1}+\dots+\widehat{a}_n$, with 
$$\widehat{a}_j=\sum_{i\geq r_j}A_j^{i}\in k(y)[[x_1,\dots,x_{\ell-1}]],$$
where $A_j^i$ is homogeneous of degree $i$ in the variables $x_1,\dots,x_{\ell-1}$. Here $\ell=d-1$ if $y$ is a closed point.

The \emph{weighted initial form} of $f_n(z)$ is defined as
\begin{equation}\label{eqaj}
\win_y(f_n(z))=\sum_{0\leq j\leq n}A_j^{jq}Z^{n-j}\in gr(\calo_{V^{(d-1)},y})[Z],
\end{equation}
where each $A_j^{jq}$ is weighted homogeneous of degree $jq$, and where $A_j^{jq}=0$ if $jq\not\in\mathbb{Z}$.

Note that $\win_y(f_n(z))$ is defined in $gr(\calo_{V^{(d-1)},y})[Z]$, and it is weighted homogeneous of degree $nq$ if $Z$ is given weight $q$ and each $X_i=In(x_i)$ has weight $1$ for $i=1,\dots,d-1$. To ease notation we take now $z$ with weight $q$ and each $x_i$ with weight $1$ at the ring of formal power series.
\end{definition}

\begin{remark}\label{win0}
Suppose that $Sl(f_n(z)W^n)(y)=0$. Then, $\win_y(f_n(z))$ is weighted homogeneous of degree $0$ where $z$ is endowed with weight $q=0$ and each $x_i$ with weight $1$. In such case, $\win_y(f_n(z))$ is defined in $k(y)[Z]$ and there is a natural identification of this polynomial with the equation defining the fiber of $X$ over $y$, namely $\win_y(f_n(z))=\bar{f}_n(Z)\in k(y)[Z]$, where $\bar{f}_n(z)$ is the equation that defines  the fiber.
\end{remark}

\begin{remark}\label{rmk44pe}
Note that in (\ref{eqaj}) $A_0^0=1$, and hence $\win_y(f_n(z))$ is a monic polynomial of degree $n$. Moreover, note also that $\win_y(f_n(z))\not=Z^n$  by definition.

One can check that, $\win_y(f_n(z))$ is an $n$-th power if and only if $\win_y(f_n(z))=(Z+A)^n$ for some $A\in gr(\calo_{V^{(d-1)},y})$,  which must be homogeneous of degree $q$. If this occurs, then $q\in\mathbb{Z}$, and hence there is an element $\alpha\in\calo_{V^{(d-1)},y}$ so that $In_y(\alpha)=A$. The change of variables $z_1= z+\alpha$ gives rise to a strictly higher slope, i.e., $Sl(f_n'(z_1)W^n)(y)>Sl(f_n(z)W^n)(y)$.

The previous discussion shows that a change of the form $z_1= z+\alpha$ can increase the slope if and only if $\win_y(f_n(z))$ is an $n$-th power. 

As $\Sing(f_nW^n)$ has no components of codimension $1$ in $V^{(d)}$, by assumption, after finitely many changes of the variable $z$ as above, we may assume that $\win_y(f_n(z))$ is not an $n$-th power.
\end{remark}

\begin{definition}\label{def:q}
A monic polynomial $f_n(z)$ is said to be in \emph{normal form} at a point $y\in V^{(d-1)}$ if the weighted initial form $\win_y(f_n(z)W^n)$ is not an $n$-th power at $gr(\calo_{V^{(d-1)},y})[Z]$. 
\end{definition}

\begin{proposition}\label{por71}
Fix a point $y\in V^{(d-1)}$ and a polynomial $f_n(z)=z^n+a_1z^{n-1}+\dots+a_n\in\calo_{V^{(d-1)}}[z]$ which is in normal form at $y$, i.e., assume that $z$ is such that $\win_y(f_n(z))$ is not an $n$-th power at $gr(\calo_{V^{(d-1)},y})[Z]$.
Then $y\in\beta(\Sing(f_nW^n))$ if and only if $q=Sl(f_n(z)W^n)(y)\geq 1$.
\end{proposition}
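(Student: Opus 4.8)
The plan is to prove the two implications of the equivalence separately; only the direction ``$y\in\beta(\Sing(f_nW^n))\Rightarrow q\geq 1$'' uses the normal form hypothesis, the converse being a direct order computation.

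For the forward implication I would start from a point $x\in\Sing(f_nW^n)$ with $\beta(x)=y$. By Zariski's Multiplicity Lemma \ref{ZML}(1), $\overline{f}_n(z)=(z-\overline{\alpha})^n\in k(y)[z]$ for some $\overline{\alpha}\in k(y)$, and $x$ is the unique point of $X$ over $y$, rational over $y$. The key first step is to rule out $q=0$: if $q=Sl(f_nW^n)(y)=0$, then by Remark \ref{win0} the weighted initial form $\win_y(f_n(z))$ equals $\overline{f}_n(Z)=(Z-\overline{\alpha})^n$, an $n$-th power, contradicting the normal form assumption (Definition \ref{def:q}). Hence $q>0$, and then Remark \ref{rmkZ}(1) forces $\overline{\alpha}=0$, i.e. $\overline{f}_n(z)=z^n$; in particular $z(x)=0$, so $x$ lies on the $\beta$-section $\{z=0\}$. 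Since $\beta$ is smooth of relative dimension one and $x$ is rational over $y$, the set $\{z,x_1,\dots,x_\ell\}$ is then a regular system of parameters of $\calo_{V^{(d)},x}$ whenever $\{x_1,\dots,x_\ell\}$ is one of $\calo_{V^{(d-1)},y}$, so that $\widehat{\calo}_{V^{(d)},x}=k(y)[[z,x_1,\dots,x_\ell]]$ with each $a_j\in k(y)[[x_1,\dots,x_\ell]]$. Finally, since $x\in\Sing(f_nW^n)$ forces $f_n\in\langle z,x_1,\dots,x_\ell\rangle^n$, and since the summands $a_jz^{n-j}$ of $f_n=z^n+\sum_j a_jz^{n-j}$ involve pairwise distinct powers of $z$ (and no $z$ appears in the $a_j$), no cancellation can occur and I would conclude $\nu_y(a_j)+(n-j)\geq n$, i.e. $\nu_y(a_j)\geq j$ for all $j$, whence $q=\min_j\nu_y(a_j)/j\geq 1$. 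This last step is the one-variable bookkeeping already carried out in Remark \ref{rmkZ}(1).

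For the converse I would argue in reverse: if $q\geq 1$ then $q>0$, so $\overline{f}_n(z)=z^n$ by Remark \ref{rmkZ}(1), the fiber $\beta^{-1}(y)$ again meets $X$ in a single rational point $x$ on $\{z=0\}$, and $\widehat{\calo}_{V^{(d)},x}=k(y)[[z,x_1,\dots,x_\ell]]$. Now $\nu_y(a_j)\geq jq\geq j$ gives $a_jz^{n-j}\in\langle z,x_1,\dots,x_\ell\rangle^n$ for every $j$, hence $f_n\in\langle z,x_1,\dots,x_\ell\rangle^n$, so $x$ is an $n$-fold point of $X$; thus $x\in\Sing(f_nW^n)$ and $y=\beta(x)\in\beta(\Sing(f_nW^n))$.

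I expect the main obstacle to be the forward direction, and specifically the point where the normal form hypothesis enters: \emph{a priori} Zariski's lemma only yields $\overline{f}_n=(z-\overline{\alpha})^n$, and $\overline{\alpha}\neq 0$ would give $q=0$ while $y\in\beta(\Sing(f_nW^n))$, which would break the statement; the normal form hypothesis (via Remark \ref{win0}) is exactly what rules this out and places us in coordinates where the orders of the $a_j$ can be read off. A secondary point requiring care is checking that $\{z,x_1,\dots,x_\ell\}$ really is a regular system of parameters at $x$, which rests on the section property of $\{z=0\}$, smoothness of $\beta$, and rationality of $x$ over $y$ from Lemma \ref{ZML}(1).
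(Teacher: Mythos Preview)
Your proposal is correct and follows essentially the same approach as the paper's proof: both directions hinge on Remark \ref{rmkZ}(1), with the normal form hypothesis invoked (via Remark \ref{win0} and Lemma \ref{ZML}(1)) precisely to rule out $q=0$ in the forward implication. Your write-up simply unpacks in detail what the paper states tersely, in particular the passage from $q>0$ to $q\geq 1$ via the regular system of parameters $\{z,x_1,\dots,x_\ell\}$ and the order bookkeeping on the $a_j$.
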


\begin{proof} 
If $y\in\beta(\Sing(f_nW^n))$, and since $\win_y(f_n(z))$ is not an $n$-th power, then $Sl(f_n(z)W^n)(y)>0$. Under these conditions Remark \ref{rmkZ} (1) says that $Sl(f_n(z)W^n)(y)\geq 1$.

Conversely, suppose that $Sl(f_n(z)W^n)(y)\geq 1$. Lemma \ref{rmkZ} (1) says that $Sl(f_n(z)W^n)(y)> 0$ implies that there is a unique point, say $y'\in X=V(f_n(z))$, dominating $y$. Fix a regular system of parameters $\{x_1,\dots,x_\ell\}$ at $\calo_{V^{(d-1)},y}$, and recall that $\{z,x_1,\dots,x_\ell\}$ is a regular system of parameters  at $\calo_{V^{(d)},y'}$. Finally, $Sl(f_n(z)W^n)(y)\geq 1$ implies that $\nu_{y}(a_i)\geq i$ and hence $y'$ is an $n$-fold point of $X$.
\end{proof}

\begin{remark}
Let the assumptions be as in (\ref{eqtri}), where $f_n(z)=z^n+a_1z^{n-1}+\dots+a_n\in\calo_{V^{(d-1},y}[z]$. A function, say $q_\beta:\Sing(f_nW^n)\longrightarrow \mathbb{Q}_{>0}$, will be defined by setting:
$$q_\beta(x):=\max_{z_1}\{Sl(f_n(z_1)W^n)(\beta(x))\},$$
where $z_1=z+\alpha$ for all the possible $\alpha\in\calo_{V^{(d-1)},\beta(x)}$ (see Remark \ref{rmkZ} (1)).
Note that, if $f_n(z)$ is in normal form at $\beta(x)$, then  Remark \ref{rmk44pe} ensures that
$$q_\beta(x)=Sl(f_n(z)W^n)(\beta(x)).$$
\end{remark}

\begin{theorem}\label{thmfn}
Fix a projection $V^{(d)\overset{\beta}{\longrightarrow}}V^{(d-1)}$ together with a monic polynomial $f_n(z)=z^n+a_1z^{n-1}+\dots+a_n\in\calo_{V^{(d-1)}}[z]$, where $\{z=0\}$ is a section of $\beta$. Consider a point $x\in\Sing(f_nW^n)$ and assume that $f_n(z)$ is in  normal form at $\beta(x)$. 
Let  $q$ denote the slope of $f_n(z)$ at $\beta(x)$. The rational number $q$ is completely characterized by the weak equivalence class of the algebra $\G=\calo_{V^{(d)}}[f_nW^n]$ in a neighborhood of $x$.
\end{theorem}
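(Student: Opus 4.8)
The plan is to reduce the statement to the two facts already available: first, that the value of the H-function $\Hord^{(d-1)}(\G)(x)$ is an invariant of the weak equivalence class (this will be established in the paper as Theorem \ref{theo}(1), and for the hypersurface case it traces back to \cite[Theorem A.7]{BeV1}); and second, the explicit description of $\Hord^{(d-1)}$ in terms of a presentation, namely the formula (\ref{inVOR}). The bridge between these and the statement at hand is Proposition \ref{por71} together with Remark \ref{rmk44pe}: when $f_n(z)$ is in normal form at $\beta(x)$, no change $z_1 = z+\alpha$ can increase the slope, so $q = Sl(f_n(z)W^n)(\beta(x)) = q_\beta(x)$, and this in turn controls $\Hord^{(d-1)}$.

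First I would fix the elimination algebra $\R_{\G,\beta}$ over $V^{(d-1)}$ attached to the transversal projection $\beta$, and recall from (\ref{inVOR}) that
$$\Hord^{(d-1)}(\G)(x)=\min\Big\{q_\beta(x),\ \ord(\R_{\G,\beta})(\beta(x))\Big\},$$
where the minimum over the coefficients $\nu_{\beta(x)}(a_j)/j$ realizes $q_\beta(x)$ precisely because $f_n$ is in normal form (here one uses Remark \ref{rmk44pe} to guarantee that the slope computed from the given $z$ is already the maximal one). Next I would distinguish two cases according to whether the slope is small or large. If $q \geq \ord(\R_{\G,\beta})(\beta(x))$, then $\Hord^{(d-1)}(\G)(x) = \ord(\R_{\G,\beta})(\beta(x))$, a quantity intrinsic to the weak equivalence class of $\R_{\G,\beta}$ — but the elimination algebra itself is only canonically attached to $\beta$, not to $\G$ alone; here one invokes the independence of the H-function from the choice of $\beta$ (the invariance of $\Hord^{(d-1)}$), which pins down the number $\min\{q,\ord\}$ but not $q$ separately. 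So the genuinely informative case is $q < \ord(\R_{\G,\beta})(\beta(x))$, where $\Hord^{(d-1)}(\G)(x) = q$ exactly, and then $q$ is read off directly from the invariant.

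To handle the remaining case — when $q$ is too large to be detected as the value of the H-function for the given algebra — I would pass to an auxiliary algebra, or truncate: replace $\G$ by $\G_{(f_n, n)} = \calo_{V^{(d)}}[f_nW^n]$ itself (which is the relevant algebra in the statement) and exploit that one may test the slope against suitable twists of $\G$ by adding generators of high weight supported away from the singular locus, or by intersecting with a Rees algebra whose own order is forced to exceed $q$; more cleanly, one uses that the slope $q$ is characterized by the behaviour of the sequence of blow-ups at the center $\beta^{-1}$ of the point and the way $\Hord^{(d-1)}$ drops, which is dictated purely by the weak equivalence data via Theorem \ref{rmkweq} and the compatibility of H-functions with local sequences. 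Concretely: monoidal transformations of $\G$ along centers inside $\Sing(\G)$ and the resulting values of $\Hord^{(d-1)}$ on the transforms determine $q$, because each such transformation changes the slope by a prescribed combinatorial amount (subtracting $1$ per blow-up of a point), and after finitely many such steps $q$ is forced below the elimination order and thus becomes visible. Since the entire sequence of transforms, together with the singular loci and the H-values, depends only on the weak equivalence class of $\G$ near $x$, so does $q$.

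The main obstacle I anticipate is precisely the reduction in the second paragraph: showing that $q$ (and not merely the clamped minimum $\min\{q, \ord(\R_{\G,\beta})(\beta(x))\}$) is recoverable from weak equivalence data. One must rule out the possibility that two algebras $\G, \G'$ in the same weak equivalence class near $x$ have presentations with different slopes, both exceeding their respective elimination orders. The resolution is to argue that under a sequence of transformations one can always drive the slope strictly below the elimination order — this requires a separate argument (essentially that $\ord$ of the elimination algebra does not drop faster than the slope under the relevant blow-ups, or a normalized-order comparison), after which invariance of $\Hord^{(d-1)}$ finishes the job. I expect the bookkeeping here to be the delicate point, and it is where the normal form hypothesis and the structural properties of elimination algebras from Proposition \ref{elim} and \ref{cuad} are indispensable.
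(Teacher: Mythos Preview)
Your proposal has a structural circularity. You plan to invoke the invariance of $\Hord^{(d-1)}(\G)(x)$ under weak equivalence (Theorem \ref{theo}(1), or its precursor Theorem \ref{thmqG}), but in this paper those results are proved \emph{after} and \emph{by the method of} Theorem \ref{thmfn}: the proof of Theorem \ref{thmqG} explicitly begins ``we will argue as in Theorem \ref{thmfn}'', and the proof of Theorem \ref{theo}(1) in turn says ``similar to that of Theorem \ref{thmqG}''. Even the definition of $\Hord^{(d-1)}$ (Definition \ref{defvord}) rests on Corollary \ref{corbord}, itself a consequence of Theorem \ref{thmqG}. So within this paper you cannot assume that the H-function is well defined and weak-equivalence invariant in order to prove Theorem \ref{thmfn}; that reverses the logical flow. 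Hedging by citing \cite[Theorem A.7]{BeV1} does not escape the problem: as the introduction notes, that result is the $e=1$ case of Theorem \ref{thmqG}, i.e.\ exactly the generalization whose hypersurface prototype you are asked to establish here.

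Even setting circularity aside, your argument leaves unresolved precisely the point you flag as ``the main obstacle'': recovering $q$ itself, rather than $\min\{q,\ord(\R_{\G,\beta})(\beta(x))\}$, from weak-equivalence data. Your suggested fix---blow up until the slope drops below the elimination order---would require showing that the elimination order does not drop at least as fast, and you do not establish this; the paragraph remains a sketch. The paper's proof sidesteps both issues by a direct construction that never mentions H-functions or elimination algebras: pull back to $V^{(d)}\times\mathbb{A}^1$, blow up $N$ times at successive closed points $p_0,\dots,p_{N-1}$ along the strict transform of $\{x\}\times\mathbb{A}^1$ (Stage A), then count the maximal number $\ell_N$ of permissible monoidal transformations along codimension-two centers $\langle z_{N+i},t\rangle$ (Stage B). A direct computation gives $\ell_N=\lfloor N(q-1)-1\rfloor$, and the normal-form hypothesis (via Proposition \ref{por71} and Remark \ref{rmkZ}) is used to check that no further such transformation is permissible. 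Since the integer $\ell_N$ is visibly determined by the weak equivalence class, so is $q=1+\lim_{N\to\infty}\ell_N/N$. This constructive limit argument is the idea your proposal is missing.
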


\begin{corollary}
Fix a hypersurface $X\subset V^{(d)}$ of maximum multiplicity $n$ and two projections $\beta:V^{(d)}\longrightarrow V^{(d-1)}$ and $\beta':V^{(d)}\longrightarrow V'^{(d-1)}$, each is the setting of (\ref{eqtri}). For any $n$-fold point $x\in X$:
$$q_\beta(x)=q_{\beta'}(x).$$
\end{corollary}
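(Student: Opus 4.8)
\emph{Strategy for Theorem \ref{thmfn} and its Corollary.}

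The plan is to reconstruct $q$ from the successive singular loci that arise when one runs certain canonically prescribed local sequences on $\G$. Since weakly equivalent Rees algebras admit, by definition, the same local sequences with identical singular loci at every stage, such a reconstruction exhibits $q$ as an invariant of the weak equivalence class of $\G$ near $x$; here $x$ is fixed and $\G$, $\G'$ live on the same $V^{(d)}$, so the various blow-ups at $x$ or at transforms of $x$ are unambiguous. Throughout one may freely replace $\G$ by $Diff(\G)$ or by its integral closure, as by Theorem \ref{rmkweq} this changes neither the weak equivalence class nor any of the singular loci we shall use. Observe first that, $f_n(z)$ being in normal form at $y=\beta(x)$ and $x$ lying in $\Sing(f_nW^n)$, Proposition \ref{por71} already gives $q\ge 1$; thus the bound $q\ge 1$ is read off from the single fact $x\in\Sing(\G)$.

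To recover $\lfloor q\rfloor$, blow up the point $x$ in $V^{(d)}$ and, compatibly, its image $\beta(x)$ in $V^{(d-1)}$; near the relevant points this produces a projection $\beta_1\colon V_1^{(d)}\to V_1^{(d-1)}$ with a section induced from $\{z=0\}$. At the distinguished point of the exceptional fibre, singled out intrinsically via Zariski's Multiplicity Lemma (Lemma \ref{ZML}(1)), the weighted transform of $f_n$ is again a monic polynomial of degree $n$ in a transversal parameter, and its slope, computed after the translation $z_1\mapsto z_1+\alpha$ that restores normal form (which by Remark \ref{rmk44pe} leaves the slope unchanged), equals $q-1$. By Lemma \ref{ZML}(2) and (3) the center and this distinguished point are intrinsic to the singular locus, so the blow-up is equally a local sequence of any algebra weakly equivalent to $\G$ near $x$, with the same singular loci; and by Proposition \ref{por71} applied to the transform, the distinguished point remains in its singular locus precisely when $q-1\ge 1$. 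Iterating, the number of such blow-ups performed before the distinguished point leaves the singular locus of the transform equals $\lfloor q\rfloor$, which is therefore an invariant.

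For the fractional part one needs a finer, still canonical, tower; since $V^{(d-1)}$ may admit no non-trivial blow-ups near $\beta(x)$, this tower must begin with a pull-back adjoining free variables, after which $\Sing(\G)$ becomes positive-dimensional and one blows up its components, intrinsic by Lemma \ref{ZML}(3). The transformation law for the slope along such a tower — again read off from a normal-form monic polynomial in a transversal parameter, with admissible translations harmless by Remark \ref{rmk44pe} — together with Proposition \ref{por71} controlling when the transformed slope drops below $1$, detects the denominator of $q$: if $q=a/b$ in lowest terms, the step at which a canonical tower of suitable length empties the singular locus near the distinguished point determines the pair $(a,b)$, hence $q$. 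As every center and every singular locus appearing in these towers depends only on the weak equivalence class of $\G$ near $x$, so does $q$; this is the Theorem. The Corollary is the special case $\G=\G'$: for projections $\beta,\beta'$ each presenting $\G$ by a monic polynomial in normal form at the respective image point, one has $q_\beta(x)=Sl(f_n(z)W^n)(\beta(x))$ and $q_{\beta'}(x)=Sl(f'_n(z')W^n)(\beta'(x))$ by Remark \ref{rmk44pe}, and both are attached to the single weak equivalence class of $\G$ near $x$, hence coincide.

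The main obstacle is the combinatorial content of the second and third paragraphs: pinning down the exact rule by which the slope of the normal-form transform evolves under each elementary blow-up of the canonical towers — in particular its effect on the denominator — and verifying that the distinguished point and the canonical center at each stage are truly independent of the auxiliary coordinate $z$ and of the projection $\beta$. That independence is exactly where the normal-form hypothesis is essential: by Remark \ref{rmk44pe} no change $z_1=z+\alpha$ can increase the slope of a normal-form polynomial, so the rational number one computes is intrinsic to $\G$ and not an artifact of the chosen presentation — without this, $q$ would not even be well defined, let alone a weak-equivalence invariant.
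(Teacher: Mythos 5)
Your final step --- both $q_\beta(x)$ and $q_{\beta'}(x)$ are normal-form slopes, the two local monic equations of $X$ generate the same ideal up to a unit, so Theorem \ref{rmkweq}(1) puts the two algebras in one weak equivalence class and Theorem \ref{thmfn} forces the slopes to agree --- is exactly the paper's proof of the Corollary. The bulk of your proposal, however, is a sketch of Theorem \ref{thmfn} itself, and there the argument has a genuine gap. For the integer part you iterate point blow-ups of $x$ inside $V^{(d)}$ and claim that, after the translation $z_1\mapsto z_1+\alpha$ restoring normal form, the slope at the distinguished point is $q-1$, citing Remark \ref{rmk44pe} to say the translation ``leaves the slope unchanged.'' But Remark \ref{rmk44pe} says the opposite of what you need: a translation leaves the slope unchanged precisely when $\win_y(f_n)$ is \emph{not} an $n$-th power, and nothing in your construction shows that the weighted initial form of the transform at the new point is again not an $n$-th power. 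A point blow-up scrambles the homogeneous parts $A_j^{jq}$ (the initial form of $a_j/x_1^{j}$ at the new point is not identified with $A_j^{jq}$, and the order can even exceed $r_j-j$ at special points of the exceptional fibre), so the transform may well be an $n$-th power in weighted initial form; then restoring normal form strictly raises the slope, the count of permissible blow-ups no longer reads off $\lfloor q\rfloor$, and Proposition \ref{por71} cannot be applied to the raw slope $q-k$. This is not a technicality: preserving the non-$n$-th-power hypothesis through the tower is the entire difficulty in positive characteristic.

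The paper's proof is engineered precisely to avoid this. It passes to $V^{(d)}\times\mathbb{A}^1$ and blows up $N$ points along the trajectory of $x\times\mathbb{A}^1$ (Stage A); in the relevant chart the coefficients become $t^{N(r_j-j)}\bigl(A_j^{jq}+t^{\gamma_j}\widetilde{A}_j\bigr)$, so the weighted parts $A_j^{jq}$ survive \emph{verbatim} up to powers of the exceptional parameter $t$, and the normal-form hypothesis on $\win_{\x}(f_n)$ transfers intact to every stage. Stage B then counts the maximal number $\ell_N$ of permissible codimension-two centers $\langle z_{N+i},t\rangle$; impermissibility at step $\ell_N+1$ is detected either by the slope at $\xi_{H}$ falling in $(0,1)$ or, when $Nq\in\mathbb{Z}$, by the fibre equation being identified with $\win_{\x}(f_n)$ --- which is where the not-an-$n$-th-power hypothesis is finally cashed in via Proposition \ref{por71} --- and $q$ is recovered from $\lim_N \ell_N/N=q-1$ as in (\ref{eqlim}). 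Your third paragraph gestures at such an auxiliary pull-back for the ``fractional part,'' but you leave the tower, its transformation law, and the normal-form bookkeeping unconstructed and yourself flag it as the main obstacle; since both the integer and fractional parts of $q$ are extracted from this single limit in the paper, that unconstructed piece is where essentially all of the proof lives.
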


\begin{proof}
The rational number $q_{\beta}(x)$ is completely determined in terms of the weak equivalence class of $\G=\calo_{V^{(d)}}[f_nW^n]$ at $x$, and this is independent of the chosen projection.
\end{proof}
\end{parrafo}

\begin{proofTHM} 
Set $\x=\beta(x)$. 
As $x\in\Sing(f_nW^n)$, Lemma \ref{ZML} (1) says that $\overline{f}_n(z)=(z-\overline{\alpha})^n$ (with $\alpha\in k(\x)$). Here $\overline{f}_n(z)$ denotes the restriction to the fiber over $\x$. Since $f_n(z)$ is a normal form at $\x$, then Remark \ref{rmkZ} (1) ensures that $\overline{\alpha}=0$ and hence $z$ vanishes at $x$. A regular system of parameters $\{x_1,\dots,x_e\}$ at $\calo_{V^{(d-1)},\x}$ can be extended to $\{z,x_1,\dots,x_{e}\}$ so as to be a regular system of parameters at $\calo_{V^{(d)},x}$. Note also that $x\in\Sing(f_nW^n)$, so $\nu_{\x}(a_j)\geq j$ for $j=1,\dots,n$.

Express the monic polynomial as $f_n(z)=z^n+\widehat{a}_1z^{n-1}+\dots+\widehat{a}_n$ at the completion $\widehat{\calo}_{V^{(d)},x}$, and set
$$\widehat{a}_j=A_j^{jq}+\widetilde{A}_j\in k(\x)[[x_1,\dots,x_{e}]]$$
where $A_j^{jq}$ is homogeneous of degree $jq$ and $\widetilde{A}_j$ has order $>jq$. Here $A_j^{jq}=0$ if $jq\not\in\mathbb{Z}$.
Recall that $f_n(z)$ is in normal form, i.e., that
\begin{equation}\label{eqthm}
\win_\x(f_n(z))=Z^n+\sum_{j=1}^{n}A_j^{jq}Z^{n-j}\in gr_\x(\calo_{V^{(d-1)}})[Z]
\end{equation}
is not an $n$-th power. Denote by $r_j=\nu_\x(a_j)$ ($j=1,\dots,n$) the order of each coefficient at $\calo_{V^{(d-1)},\x}$. Note that $A_j^{jq}=0$ if $\frac{r_j}{j}>q$.

\vspace{0.2cm}

\noindent $\bullet$ {\bf Stage A:}
Let $V^{(d)}\times \mathbb{A}^1$ denote the product of $V^{(d)}$ with the affine line. Projection on first coordinate enable us to take the pull-back of $f_n(z)$, in a neighborhood of $(x,0)\in V^{(d)}\times\mathbb{A}^1$. The natural extension 
\begin{equation}\label{betaid}
\xymatrix@R=0pc@C=3pc{
V^{(d)}\times\mathbb{A}^1\ar[r]^{\beta\times id} & V^{(d-1)}\times\mathbb{A}^1
}\end{equation} 
is a smooth morphism that maps $(x,0)$ to $(\x,0)$.The natural identification of $\win_{(\x,0)}(f_n(z))$ with $\win_\x(f_n(z))$ guarantees that $\win_{(\x,0)}(f_n(z))$ is not an $n$-th power. This identification together with Remark \ref{rmkZ} (1) ensure that
$$Sl(f_n(z)W^n)((\x,0))=Sl(f_n(z)W^n)(\x)=q\geq1.$$

Fix coordinates $\{z,x_1,\dots,x_{e},t\}$ locally at $(x,0)$, here $\{z,x_1,\dots,x_{e}\}$ is the regular system of parameters at $\calo_{V^{(d)},x}$ mentioned before. Consider the monoidal transformation with center $p_0=(x,0)$ and let $p_1$ be the intersection of the new exceptional hypersurface, say $H_1$, with the strict transform of  $x\times\mathbb{A}^1$.

The point $p_1$ can be identified with the origin of the $U_t$-chart, ($U_t=\Spec(k[\frac{z}{t},\dots,\frac{x_{e}}{t},t])$). 
Now set 
$$f_n^{(1)}(z_1)=z_1^n+t^{r_1-1}\widetilde{a}_1^{(1)}z_1^{n-1}+\dots+t^{r_n-n}\widetilde{a}_n^{(1)},$$
at the completion of the local ring of at $p_1$, and check that:
$$t^{r_j-j}\widetilde{a}_j^{(1)}z_1^{n-j}=t^{r_j-j}\big(A_j^{jq}+t^{\gamma_j}\widetilde{A}_j\big)z_1^{n-j}$$
with $\gamma_j>0$ for $j=1,\dots,n$. Here $z_1=\frac{z}{t}$ and $\widetilde{a}^{(1)}_j$ is the strict transform of $\widehat{a}_j$.

This process can be iterated $N$-times, defining a sequence of monoidal transformations at $p_1,\dots$, $p_{N-1}$, where each $p_j$ is the point of intersection of the new exceptional component, say $H_j$, with the strict transform of $x\times\A^1$.

The final strict transform of $f_n(z)$ at the $U_t$-chart is given by
\begin{equation}\label{fN}
f_n^{(N)}(z_N)=z_N^n+t^{N(r_1-1)}\widetilde{a}_1^{(N)}z_N^{n-1}+\dots+t^{N(r_n-n)}\widetilde{a}_n^{(N)},
\end{equation}
where
$$t^{N(r_j-j)}\widetilde{a}_j^{(N)}z_N^{n-j}=t^{N(r_j-j)}\big(A_j^{jq}+t^{\gamma'_j}\widetilde{A}_j\big)z_N^{n-j}$$
with $\gamma'_j>0$ for $j=1,\dots,n$.

It may occur that $\{f_n^{(N)}=0\}\cap H_N$ is a $2$-codimensional component of the $n$-fold points of $\{f_n^{(N)}=0\}$. In that case, we will  fix $N$ sufficiently large, and we look for the largest number of successive monoidal transformations that can be defined with center in codimension $2$. We explain bellow how these centers are to be chosen.

We also show that the rational number $q$ is characterized in terms of these monoidal transformations:

\vspace{0.2cm}

\noindent $\bullet$ {\bf Stage B}: Firstly, consider a monoidal transformation along the center $\id{z_N,t}$, if possible. Denote $\frac{z_N}{t}$ by $z_{N+1}$  at the $U_t$-chart. The transform is
$$f_n^{(N+1)}(z)=z^n_{N+1}+t^{N(r_1-1)-1}\widetilde{a}_1^{(N+1)}z^{n-1}_{N+1}+\dots+t^{N(r_n-n)}\widetilde{a}_{n}^{(N+1)},$$
where
$$t^{N(r_j-j)-j}\widetilde{a}_j^{(N+1)}z_{N+1}^{n-j}=t^{N(r_j-j)-j}\big(A_j^{jq}+t^{\gamma''_j}\widetilde{A}_j\big)z^{n-j}_{N+1}$$
with $\gamma''_j>0$ for $j=1,\dots,n$.

After $\ell$ monoidal transformations along centers of codimension $2$ of the form  $\id{z_{N+i},t}$, the exponents of $t$ in each coefficient is $N(r_j-j)-\ell j$. Therefore $\id{z_{N+\ell},t}$ is a permissible center whenever $N(r_j-j)-\ell j\geq j$ for all $j\in\{1,\dots,n\}$. In particular, this condition requires that 
$$\ell\leq\min_{1\leq j\leq n}\Big\{N\big(\frac{r_j}{j}-1\big)-1\Big\}=N(q-1)-1.$$

The geometric interpretation of the previous sequence of $\ell$-monoidal transformation can be described as follows: Set $X_N=\{f_n^{(N)}=0\}$ (see (\ref{fN})) and let $H_N$ denote the exceptional hypersurface $t=0$. The sequence previously constructed can be expressed in terms of the diagram
\begin{equation}\label{seqXN}
\begin{xymatrix}@R=0.1cm@C=1cm{
X_N & X_{N+1} &  & X_{N+\ell}\\
V^{(d+1)}_N & V_{N+1}^{(d+1)}\ar[l]_{\pi_{N+1}} &\cdots\ar[l]_{\pi_{N+2}} & V_{N+\ell}^{(d+1)}.\ar[l]_{\pi_{N+\ell}}
}\end{xymatrix}
\end{equation}
If $H_{N+i+1}$ denotes the exceptional hypersurface of $\pi_{N+i}$, then the centers of this monoidal transformations are defined by $X_{N+i+1}\cap H_{N+i+1}$.

Let $\widetilde{\beta}$ denote the morphism $V^{(d)}\times \mathbb{A}^1\longrightarrow V^{(d-1)}\times \mathbb{A}^1$ in (\ref{betaid}). This morphism has a natural lifting to the sequence  of monoidal transformations of length $N$ in Stage A), and also to the sequence (\ref{seqXN}). This is guaranteed by Proposition \ref{elim} (3). In particular, for each index $i=0,\dots,\ell$, the previous sequence defines morphisms 
\begin{equation}\label{eqbetai}
\widetilde{\beta}_{N+i}:V^{(d+1)}_{N+i}\longrightarrow V^{(d)}_{N+i}.
\end{equation}

Set 
\begin{equation}\label{eqmt}
\ell_N=\lfloor N(q-1)-1\rfloor.
\end{equation}
We finally claim that if $\ell=\ell_N$, then $X_{N+\ell}\cap H_{N+\ell}$ is not a permissible center for $X_{N+\ell}$ (see (\ref{seqXN})).

In fact, whenever $Nq\not\in\mathbb{Z}$, and after applying $\ell_N$ monoidal transformations along these centers of codimension $2$, one gets $V(\id{z_{N+\ell_N},t})\subset X_{N+\ell_N}$ and $0<Sl(f_n^{(N+\ell_N)}W^n)(\xi_H)<1$. So $H_{N+\ell_N}=\{t=0\}$ cannot be a component of $\widetilde{\beta}_{N+\ell}(\Sing(f_n^{(N+\ell_N)}W^n))$ (see Proposition \ref{por71}). 

Note that if $Nq\in\mathbb{Z}$, then $N(q-1)-1$ is a positive integer. In this case, after applying $N(q-1)-1$ monoidal transformations along these 2-codimensional centers, the final strict transform of $f_n^{(N)}(z)$ is given by
$$f_n^{(N+\ell_N)}(z)=z_{N+\ell_N}^n+t^{N(r_1-q)}\widetilde{a}_1^{(N+\ell_N)}z^{n-1}_{N+\ell_N}+\dots+t^{N(r_n-qn)}\widetilde{a}_{n}^{(N+\ell_N)}$$
at the chart of interest. We claim that $X_{N+\ell_N}\cap H_{N+\ell_N}$ is not a permissible center.

Note that $Sl(f_n^{(N+\ell_N)}(z_{N+\ell_N})W^n)(\xi_H)=0$. Remark \ref{win0}, shows that $\win_{\xi_H}(f_n^{(N+\ell_N)}(z_{N+\ell_N}))$ can be identified with the equation defining the fiber over $\xi_H$:
$$f^{(N+\ell_N)}_n(z_{N+\ell_N})|_{t=0}=z_{N+\ell_N}^n+\sum_{j=1}^{n}A_j^{jq}z^{n-j}_{N+\ell_N}.$$
The expression of the right hand side can be naturally identified with $\win_\x(f_n(z))$ (see (\ref{eqthm})). As we assume that $\win_\x(f_n(z))$ is not an $n$-th power, Proposition \ref{por71} together with Remark \ref{rmkZ} (3) ensure that $H_{N+\ell_N}$ is not a component of $\widetilde{\beta}_{N+\ell_N}(\Sing(f_n^{(N+\ell_N)}W^n))$.

This discussion will show that $q$ is totally characterized by Hironaka's weak equivalence class of the $n$-fold points of the hypersurface $\{f_n=0\}$. This point will be further clarify in Remark \ref{513}. Notice that the construction of the sequences in Stage A) and Stage B), together with (\ref{eqmt}) lead to the equality
\begin{equation}\label{eqlim}
\lim_{N\rightarrow \infty}\frac{\ell_N}{N}=q-1.
\end{equation}
\end{proofTHM}

\begin{remark}\label{513}
We claim that the rational
number $q$ can be expressed in terms of Hironaka's weak equivalence class. Recall here that the weak equivalence class of $f_nW^n$ is defined in the context of $k$-algebras of finite type, whereas  the local rings $\calo_{V^{(d)},x}$ are not within this class.

The claim is straightforward when the point $x$ is a closed point. But it requires some clarification if $x\in\Sing(f_nW^n)$ is not a closed point. Let $Y$ denote the variety with generic point $x$. As the weak equivalence class allows restriction to open sets, we may assume that $Y$ is smooth.

Fix a closed point $p$ at $Y$ and fix local coordinates $\{z,x_2,\dots,x_d\}$ at $\calo_{V^{(d)},p}$, so that $\{x_2,\dots,x_d\}$ is a regular system of parameters at $\calo_{V^{(d-1)},\beta(p)}$. We may assume, applying Lemma \ref{ZML} (2), that
\begin{enumerate}
\item $I(Y)=\id{z,x_2,\dots,x_\ell}$,
\item $f=z^{n}+a_1z^{n-1}+\dots+a_n$,
\item $I(\beta(Y))=\id{x_2,\dots,x_\ell}$.
\end{enumerate}
In particular, $gr_{I(\beta(Y))}(\calo_{V^{(d-1)},\beta(p)})=\calo_{\beta(Y),\beta(p)}[X_2,\dots,X_\ell]$ and $k(\x)$ is the quotient field of $\calo_{\beta(Y),\beta(p)}$ (for $\x=\beta(x)$). Note that $\win_Y(f_n)\in gr_{I(\beta(Y))}(\calo_{V^{(d-1)},\beta(p)})$ can be naturally identified with $\win_\x(f_n)$ via the inclusion $gr_{I(\beta(Y))}(\calo_{V^{(d-1)},\beta(p)})\subset gr_{k(\x)}(\calo_{V^{(d)},x})$. In this setting, one can check that  the sequences of transformations used in the previous proof to determinate the rational number $q$ are expressed in terms of the weak equivalence class of $f_nW^n$. This proves the claim for the case in which $x$ is a non-closed point.
\end{remark}

\section{Slope of a Rees algebra and the $d-1$-dimensional H-function ($\tau\geq 1$)}\label{sec6}
\begin{parrafo}

We rephrase the results and invariants discussed in Section \ref{sec5} but now in the context of  Rees algebras. Here Theorem \ref{thmqG} parallels Theorem \ref{thmfn} in the previous section.

Throughout the section we fix a Rees algebra $\G=\bigoplus I_nW^n$ over $V^{(d)}$ and a assume that $\tau_\G\geq 1$ along $\Sing(\G)$. Consider a transversal projection $V^{(d)}\overset{\beta}{\longrightarrow}V^{(d-1)}$, and assume in addition that $\G$ is $\beta$-differential (see \ref{rp23}). 

\begin{proposition}\label{pr_local}{\bf (Local presentation)}. Fix a closed point $x\in\Sing(\G)$ for wich $\G$ is simple, and a locally defined $V^{(d)}\overset{\beta}{\longrightarrow} V^{(d-1)}$, transversal to $\G$ at $x$. Assume that $\G$ is a $\beta$-relative differential Rees  algebra. Fix  $f_nW^n\in\G$ so that $f_n$ has order $n$ at  $\calo_{V^{(d)},x}$. Assume that $f_n=f_n(z)$ is a monic polynomial of degree $n$ in $\calo_{V^{(d-1)},\beta(x)}[z]$, where $z$ is a $\beta$-section.  Then, in a neighborhood of $x$, $\G$ has the same integral closure as
\begin{equation}\label{eqlocalpresen}
\calo_{V^{(d)}}[f_n(z)W^n,\Delta^{j}_z(f_n(z))W^{n-j}]_{1\leq j\leq n-1}\odot\beta^*(\R_{\G,\beta}),
\end{equation}
where $\Delta^{j}_z$ are suitable $\beta$-differential operators of order $j$.
Moreover, $\R_{\G,\beta}$ is non-zero whenever $\Sing(\G)$ is not  of codimension one in $V^{(d)}$.
\end{proposition}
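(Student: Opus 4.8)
The plan is to prove the asserted identity of integral closures by establishing the two inclusions separately; the first is formal and the second is where the elimination theory of \cite{VV4,BV3} does the work. Throughout, recall that the hypotheses ``$\G$ simple at $x$'' (which, together with $\tau_\G\ge 1$, guarantees that some $f_nW^n\in\G$ with $\nu_x(f_n)=n$ exists) and ``$\beta$ transversal to $\G$ at $x$'' (which lets one take $f_n$ monic in the section variable $z$ by Weierstrass preparation) are what make the stated data available; in the argument proper they enter only through the fact that $\ord_\G\equiv 1$ near $x$ and, by Zariski's Multiplicity Lemma \ref{ZML}(1) applied to $\{f_n=0\}$, that $\bar f_n(z)=z^n$ in $k(x)[z]$, so that $z$ vanishes at $x$ and $f_n(z)$ is a Weierstrass polynomial in $z$.

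First I would check that the right-hand algebra, call it $\G_1$, lies in $\G$ locally at $x$. Since $z$ is a $\beta$-section, the Hasse--Schmidt (Taylor) operators $\Delta^{j}_z$ in the variable $z$ are $\calo_{V^{(d-1)}}$-linear, hence $\beta$-relative differential operators of order $j$; as $\G$ is $\beta$-differential and $f_n(z)W^n\in\G$, we get $\Delta^{j}_z(f_n(z))W^{n-j}\in\G$ for $1\le j\le n-1$. Together with $\beta^*(\R_{\G,\beta})\subseteq\G$, which is Proposition \ref{elim}(1), this shows $\G_1\subseteq\G$, hence $\overline{\G_1}\subseteq\overline{\G}$ on a neighbourhood of $x$.

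For the reverse inclusion $\overline{\G}\subseteq\overline{\G_1}$ I would argue on local rings (passing, if convenient, to the completion, where $\widehat{\calo}_{V^{(d)},x}=\widehat{\calo}_{V^{(d-1)},\beta(x)}[[z]]$). Given an arbitrary $gW^m\in\G$, Weierstrass division by $f_n(z)$ yields $g\equiv c_0+c_1z+\dots+c_{n-1}z^{n-1}\pmod{f_n(z)}$ with coefficients $c_i\in\calo_{V^{(d-1)},\beta(x)}$. Applying the operators $\Delta^{j}_z$ to $g$, reducing repeatedly modulo $f_n(z)$ and its $z$-derivatives, and bookkeeping weights via $\beta$-differentiality, one peels off the coefficients $c_{n-1},\dots,c_1$ and finally $c_0$; the point is that the data that survive these reductions are exactly those recorded by the elimination of the variable $z$, so by the construction of $\R_{\G,\beta}$ in \cite{VV4} (elimination of $z$ through a universal expression in the coefficients $a_1,\dots,a_n$ of $f_n$, cf.\ \cite{BV3}) this surviving data lies in $\R_{\G,\beta}$, and $g$ is integral over $\G_1$. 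In practice the cleanest route is to quote the local-presentation theorem of \cite{VV4} (together with \cite{BV3}), which describes $\overline{\G}$ precisely in the stated form; the remarks above indicate how such a statement is proved. The hard part is genuinely this inclusion, and its difficulty is a positive-characteristic phenomenon: when $p\mid n$ one cannot reduce $f_n(z)$, by a Tschirnhausen change $z\mapsto z+\alpha$, to a polynomial whose $z$-content is controlled by a single lower-order coefficient, so the reduction must pass through the whole family $\{\Delta^{j}_z(f_n(z))\}_j$ and the elimination algebra rather than through one modified variable---which is exactly what the characteristic-free elimination machinery of \cite{VV4,BV3} supplies.

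For the last assertion, note that by the equality just established $\Sing(\G)=\Sing(\G_1)\subseteq\Sing(\calo_{V^{(d)}}[f_n(z)W^n])=\{y:\mult_y\{f_n=0\}\ge n\}$. If $\Sing(\G)$ had a codimension-one component in $V^{(d)}$, that component would be a hypersurface consisting of $n$-fold points of $\{f_n=0\}$; since $f_n(z)$ is monic of degree $n$ in $z$ with coefficients not involving $z$, a direct local computation forces $a_1=\dots=a_n=0$, i.e.\ $f_n(z)=z^n$, and then the elimination of $z$ is trivial, so $\R_{\G,\beta}=0$. Conversely, $\R_{\G,\beta}=0$ forces, through the same explicit description of the elimination of $z$, that $f_n(z)=z^n$; then $\Sing(\G)=\Sing(\calo_{V^{(d)}}[z^nW^n])=\{z=0\}$, a $\beta$-section, which has codimension one in $V^{(d)}$. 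Hence $\R_{\G,\beta}$ is non-zero precisely when $\Sing(\G)$ has no codimension-one component, as claimed.
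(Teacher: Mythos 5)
The core identity of integral closures is handled at the same level of detail as the paper itself: the paper's entire proof is the citation \cite[Proposition 2.11]{BeV1}, and you likewise (and correctly) reduce the hard inclusion $\overline{\G}\subseteq\overline{\G_1}$ to the elimination machinery of \cite{VV4}, \cite{BV3}. Your verification of the easy inclusion $\G_1\subseteq\G$ is correct and is exactly the right use of the hypotheses: $\beta$-differentiality gives $\Delta^j_z(f_n)W^{n-j}\in\G$, and Proposition \ref{elim}(1) gives $\beta^*(\R_{\G,\beta})\subseteq\G$. Be aware, though, that your Weierstrass-division sketch of the reverse inclusion is a heuristic, not an argument: the actual proof in \cite{VV4} passes through the universal monic polynomial and the ring of symmetric functions invariant under simultaneous translation of the roots, and the claim that ``the surviving data lies in $\R_{\G,\beta}$'' is precisely the content being cited, not something the division algorithm delivers.

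The genuine gap is in your last paragraph. The step ``$\R_{\G,\beta}=0$ forces, through the same explicit description of the elimination of $z$, that $f_n(z)=z^n$'' is unjustified, and as stated it fails in positive characteristic for a merely $\beta$-relative differential algebra. Take $k=\mathbb{F}_2$, $f_2=z^2+xy$, $\beta:(x,y,z)\mapsto(x,y)$: here $\Delta^1_z(f_2)=0$, the elimination of $z$ from the $\beta$-relative differential closure of $\calo_{V^{(3)}}[f_2W^2]$ is trivial, yet $f_2$ is not an $n$-th power and its $2$-fold locus is the origin (codimension $3$). So whatever argument one gives must genuinely use the absolute differential structure (derivatives in the base directions, which produce elements such as $\partial_{x_i}(a_j)W^{j-1}$ inside $\R_{\G,\beta}$) together with perfectness of $k$, to conclude that vanishing of the elimination algebra forces $f_n=(z-\alpha)^n$ --- note $(z-\alpha)^n$ with $\alpha\in\calo_{V^{(d-1)}}$, not $a_1=\dots=a_n=0$ as you wrote; only a change of the section $z$ normalizes $\alpha$ to $0$. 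A cleaner route to the implication you actually need in the first half of that paragraph (codimension-one component $\Rightarrow$ $\R_{\G,\beta}$ trivial) is Proposition \ref{elim}(2): a codimension-one component of $\Sing(\G)$ maps injectively onto a $(d-1)$-dimensional, hence dense, subset of $\Sing(\R_{\G,\beta})$, forcing $\Sing(\R_{\G,\beta})=V^{(d-1)}$ and hence $\R_{\G,\beta}$ trivial --- no coefficient computation required. The converse implication, which is the one the proposition asserts, is the one your argument does not establish.
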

	
\begin{proof} \cite[Proposition 2.11.]{BeV1}.  
\end{proof}

\begin{remark}\label{rmk43}
The differential operators $\Delta^j_z$ in the previous proposition are the operators obtained by the Taylor morphism:
This is a morphism of $S$-algebras, say $Tay: S[Z]\longrightarrow S[Z, T]$,
 defined by setting $Tay(Z)=Z+T$ (Taylor expansion). Here
$$Tay(f(Z))= f(Z+T)= \sum \Delta^{r}(f(Z))T^r,$$
and these operators $ \Delta^{r}\!: S[Z] \longrightarrow S[Z]$ are defined by this morphism. It is well known that $\{\Delta^{0}, \Delta^{1}, \dots,\Delta^{r}\}$ is a basis of the free module of $S$-differential operators of order $r$. The same applies here for $\calo_{V^{(d-1)}} [z] $: the set $\{\Delta^{0}_z, \Delta^{1}_z, \dots,\Delta^{r}_z\}$ spans the sheaf of differential operators of order $r$ relative to the smooth morphism $\beta:V^{(d)}\longrightarrow V^{(d-1)}$. Moreover, as $V^{(d)}$ is \'etale over $V^{(d-1)}\times \mathbb A^1$, the previous set also generated $Diff^r_{\beta}$ ($\beta$-linear differential operators of order $r$).

\end{remark}

\begin{definition}\label{pr_locp}({\bf Presentations}).
Fix, after suitable restriction in \'etale topology, a projection $V^{(d)}\overset{\beta}{\longrightarrow} V^{(d-1)}$ transversal to a simple $\beta$-relative differential Rees algebra $\G$. Assume that $\Sing(\G)$ has no components of codimension one and that
\begin{enumerate}
\item[i)] There is a $\beta$-section $z$ (or global section so that $\{dz\}$ is a basis of the locally free module of $\beta$-differentials, say $\Omega_\beta^1$).

\item[ii)] There is an element $f_{n}(z)W^{n}\in \G$, where $f_{n}(z)$ is a monic polynomial of order $n$, say
$$f_{n}(z)=z^{n}+a_1z^{n-1}+\dots+a_{n}\in \calo_{V^{(d-1)}}[z],$$
where each $a_i$ is a global function on $V^{(d-1)}$. 
\end{enumerate}

In this case Proposition \ref{pr_local} holds, namely $\G$ has the same integral closure as
\begin{equation} \label{eqdpl} 
\calo_{V^{(d)}}[f_{n}(z)W^{n},\Delta_z^{j}(f_{n}(z))W^{n-j}]_{1\leq j\leq n-1}\odot\beta^*(\R_{\G,\beta}).
\end{equation}

We say that $\beta:V^{(d)}\longrightarrow V^{(d-1)}$, the $\beta$-section $z$,  and $f_{n}(z)=z^{n}+a_1z^{n-1}+\dots+a_{n}$ define a \emph{presentation} of $\G$. These data will be denoted by:
\begin{equation}\label{eqld}
\P(\beta:V^{(d)}\longrightarrow V^{(d-1)},  z,  f_{n}(z)=z^{n}+a_1z^{n-1}+\dots+a_{n}),
\end{equation}
or simply by $\P(\beta,  z,  f_{n}(z))$. 
\end{definition}

\begin{definition}
Fix a Rees algebra $\G$ and a presentation $\P(\beta,z,f_n(z))$. Define the \emph{slope of $\G$ relative to $\P$ at a point $y\in V^{(d-1)}$} as
$$Sl(\P)(y):=\min_{1\leq j\leq n}\Big\{\frac{\nu_y(a_j)}{j},\ord(\R_{\G,\beta})(y)\Big\}=\min\{Sl(f_n(z)W^n)(y),\ord(\R_{\G,\beta})(y)\}.$$
\end{definition}

\begin{remark}
A change of variables of the form $z_1= z+\alpha$, $\alpha\in\calo_{V^{(d-1)}}$ (a global section of $V^{(d-1)}$), gives rise to a new presentation defined in a natural way, say $\P_1=\P_1(\beta,z_1,f_n'(z_1))$.

Assume first that $Sl(\P)(y)=Sl(f_n(z)W^n)(y)<\ord(\R_{\G,\beta})(y)$. There is a
change of the form $z_1=z+\alpha$, so that the new presentation $\P_1$ has bigger slope at $y$ if and only if $\win_y(f_n(z))$ is an $n$-th power (see Remark \ref{rmk44pe}). 

On the other hand, if  $Sl(\P)(y)= \ord(\R_{\G,\beta})(y)$, the slope at $y$ cannot increase by any change of the presentation of this type.
\end{remark}

\begin{definition}\label{nfG}
Fix a Rees algebra $\G$. A presentation $\P=\P(\beta,z,f_n(z))$  is said to be in \emph{normal form at $y\in V^{(d-1)}$} if one of the following two conditions holds:
\begin{itemize}
\item Either $Sl(\P)(y)=\ord(\R_{\G,\beta})(y)$,
\item or $Sl(\P)(y)= Sl(f_n(z)W^n)(y)<\ord(\R_{\G,\beta})(y)$ and $\win_{y}(f_n(z))$ is not an $n$-th power.
\end{itemize}
\end{definition}

\begin{remark}
Fix, as in Definition \ref{pr_local}, a Rees algebra $\G$, a transversal projection $V^{(d)}\overset{\beta}{\longrightarrow}V^{(d-1)}$ and a point $y\in\beta(\Sing(\G))$. Suppose given two different presentations $\P_1=\P_1(\beta,z,f_n(z))$ and $\P_2=\P_2(\beta, z',g_m(z'))$, both in normal form at $y$. Theorem \ref{thmqG} will show that
$$Sl(\P_1)(y)=Sl(\P_2)(y).$$
Moreover, it will show that this rational number is indeed independent of the chosen projection $\beta$.
\end{remark}

The following result parallels  Proposition \ref{por71}.

\begin{proposition}\label{propHG}
Let $\G$ be a Rees algebra and let $\P=\P(\beta,z,f_n(z))$ be a presentation. Fix a point  $y\in V^{(d-1)}$ and assume that $\P$ is in normal form at $y$.
Then, $y$ is a point of $\beta(\Sing(\G))$ if and only if $Sl(\P)(y)\geq1$.
\end{proposition}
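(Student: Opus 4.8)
The plan is to reduce Proposition \ref{propHG} to its hypersurface counterpart, Proposition \ref{por71}, by exploiting the local presentation (\ref{eqdpl}) and the compatibility of $\beta$-differential Rees algebras with elimination. First I would recall that $\G$ has the same integral closure as the algebra $\G' = \calo_{V^{(d)}}[f_n(z)W^n, \Delta_z^j(f_n(z))W^{n-j}]_{1\le j\le n-1}\odot\beta^*(\R_{\G,\beta})$ in a neighborhood of the relevant point, so $\Sing(\G) = \Sing(\G')$ and hence $\beta(\Sing(\G)) = \beta(\Sing(\G'))$; thus it suffices to characterize membership in $\beta(\Sing(\G'))$. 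The key decomposition is that a point $x$ lies in $\Sing(\G')$ if and only if $x\in\Sing(\calo_{V^{(d)}}[f_n(z)W^n])$ and $x\in\Sing(\beta^*(\R_{\G,\beta}))$; since $\beta^*(\R_{\G,\beta})$ is a pull-back, $\Sing(\beta^*(\R_{\G,\beta})) = \beta^{-1}(\Sing(\R_{\G,\beta}))$, so the second condition reads $\beta(x)\in\Sing(\R_{\G,\beta})$, i.e. $\ord(\R_{\G,\beta})(\beta(x))\ge 1$.

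Next I would translate both conditions into statements about the base point $y = \beta(x)$ so as to make the fibered structure explicit. The point is that $y\in\beta(\Sing(\G'))$ if and only if there exists $x$ in the fiber $\beta^{-1}(y)$ with $x\in\Sing(\G')$, and by the decomposition above this happens exactly when $\ord(\R_{\G,\beta})(y)\ge 1$ and there is a point of the fiber that is an $n$-fold point of $\{f_n(z) = 0\}$ lying over $y$, i.e. $y\in\beta(\Sing(f_nW^n))$. Since $\P$ is in normal form at $y$, $f_n(z)$ is in normal form at $y$ (this is exactly the second bullet of Definition \ref{nfG} in the case $Sl(\P)(y) = Sl(f_nW^n)(y) < \ord(\R_{\G,\beta})(y)$; in the other case one works directly with the elimination term). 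Applying Proposition \ref{por71} to the hypersurface $\{f_n = 0\}$, the condition $y\in\beta(\Sing(f_nW^n))$ is equivalent to $Sl(f_n(z)W^n)(y)\ge 1$. Combining, $y\in\beta(\Sing(\G))$ is equivalent to $\min\{Sl(f_n(z)W^n)(y),\ \ord(\R_{\G,\beta})(y)\}\ge 1$, which is precisely $Sl(\P)(y)\ge 1$ by the definition of the slope of a presentation.

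The one subtlety — and the place I expect the main obstacle — is the normal-form hypothesis when $Sl(\P)(y) = \ord(\R_{\G,\beta})(y)$, where no condition on $\win_y(f_n(z))$ being an $n$-th power is imposed. In that regime, a change of variable $z_1 = z+\alpha$ could make $Sl(f_n(z_1)W^n)(y)$ strictly larger than $\ord(\R_{\G,\beta})(y)$, but it cannot exceed it in a way that affects the equivalence, because the slope of the presentation is then governed entirely by the elimination algebra term; one must check that $\Sing(\G')$ near points over $y$ is cut out by the $\beta^*(\R_{\G,\beta})$ factor whenever $Sl(f_nW^n)$ is already $\ge \ord(\R_{\G,\beta})(y)\ge 1$ after a normalizing change, using Remark \ref{rmkZ} (1) to upgrade any positive slope on the fiber to slope $\ge 1$. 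So the argument splits into the two cases of Definition \ref{nfG}: in the first case the $f_n$-part imposes no obstruction to being an $n$-fold point once one is over a point of $\Sing(\R_{\G,\beta})$, so $y\in\beta(\Sing(\G))\iff \ord(\R_{\G,\beta})(y)\ge 1 = Sl(\P)(y)/Sl(\P)(y)\cdot\ldots$, i.e. $\iff Sl(\P)(y)\ge 1$; in the second case the computation of the previous paragraph applies verbatim. Assembling these two cases gives the claimed equivalence.
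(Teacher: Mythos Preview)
Your approach is correct and coincides with the paper's: both arguments split on the two cases of Definition~\ref{nfG}, invoke Proposition~\ref{por71} for the second case, and use the inclusion $\beta(\Sing(\G))\subset\Sing(\R_{\G,\beta})$ for the first. The decomposition $\Sing(\G')=\Sing(f_nW^n)\cap\beta^{-1}(\Sing(\R_{\G,\beta}))$ you set up is implicit in the paper's direct check that $V(\langle z,x_1,\dots,x_\ell\rangle)\subset\Sing(\G)$.

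Your third paragraph, however, makes the first case of Definition~\ref{nfG} look harder than it is. No change of variable $z_1=z+\alpha$ or appeal to Remark~\ref{rmkZ}(1) is needed there: in that case $Sl(\P)(y)=\ord(\R_{\G,\beta})(y)$, so for the backward direction $y\in\beta(\Sing(\G))\subset\Sing(\R_{\G,\beta})$ gives $\ord(\R_{\G,\beta})(y)\geq 1$ and you are done immediately; for the forward direction $Sl(\P)(y)\geq 1$ forces $Sl(f_nW^n)(y)\geq 1$, and the implication $Sl(f_nW^n)(y)\geq 1\Rightarrow y\in\beta(\Sing(f_nW^n))$ (the ``easy'' direction in the proof of Proposition~\ref{por71}) does not require $\win_y(f_n)$ to be a non-$n$-th power. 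The garbled expression ``$\ord(\R_{\G,\beta})(y)\ge 1 = Sl(\P)(y)/Sl(\P)(y)\cdot\ldots$'' should simply read ``$\ord(\R_{\G,\beta})(y)\ge 1$, i.e.\ $Sl(\P)(y)\ge 1$''.
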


\begin{proof}
Firstly assume that  $Sl(\P)(y)\geq1$. Denote by $x\in V^{(d)}$ the unique point of the fiber $\beta^{-1}(y)$ defined by $z=0$. Fix a regular system of parameters $\{x_1,\dots,x_\ell\}$ at $\calo_{V^{(d-1)},y}$. Since $Sl_y(\P)\geq 1$, then $\nu_y(a_j)\geq j$ (for $j=1,\dots,n$) and $\ord(\R_{\G,\beta})(y)\geq 1$. Set $P=\id{z,x_1,\dots,x_\ell}$, note that $f_n(z)\in P^n$. Check now  that $V(\id{z,x_1,\dots,x_\ell})\subset \Sing(\G)$ (see also Proposition \ref{elim} (2)).

As for the converse, assume that $y\in\beta(\Sing(\G))\ (\subset\Sing(\R_{\G,\beta}))$. In this case, $\ord(\R_{\G,\beta})(y)\geq 1$. Since $\P$ is in normal form at $y$, one of the following two cases can occur:

(1) $Sl(\P)(y)=\ord(\R_{\G,\beta})(y)\geq 1$.

(2)  $Sl(\P)(y)=Sl(f_n(z)W^n)(y)<\ord(\R_{\G,\beta})(y)$ and $\win_y(f_n(z))$ is not an $n$-th power. Then Proposition \ref{por71} ensures that $Sl(\P)(y)\geq 1$.
\end{proof}

\begin{theorem}\label{thmqG}
Fix a Rees algebra $\G$, a point $x\in\Sing(\G)$ so that $\tau_{\G,x}\geq 1$. Consider a presentation $\P=\P(\beta,z,f_n(z))$ which is in normal form at $\beta(x)$.  Then, the rational value $Sl(\P)(\beta(x))$ is completely characterized by the weak equivalence class of $\G$ in a neighborhood of $x$.
\end{theorem}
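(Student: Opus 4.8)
The plan is to reduce the statement for a Rees algebra $\G$ to the case of a single hypersurface, where Theorem \ref{thmfn} already applies, using the presentation $\P$ and the invariance properties of the elimination algebra. The key point is that $Sl(\P)(\beta(x))$ is the minimum of two quantities, $Sl(f_n(z)W^n)(\beta(x))$ and $\ord(\R_{\G,\beta})(\beta(x))$, and each of these must be shown to depend only on the weak equivalence class of $\G$ near $x$. First I would invoke Proposition \ref{pr_local}: after suitable restriction, $\G$ has the same integral closure as the algebra $\calo_{V^{(d)}}[f_n(z)W^n,\Delta^j_z(f_n(z))W^{n-j}]\odot\beta^*(\R_{\G,\beta})$, and by Theorem \ref{rmkweq} (1) algebras with the same integral closure are weakly equivalent. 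So it suffices to characterize $Sl(\P)(\beta(x))$ from the weak equivalence class of this explicit algebra.

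Next I would split according to which of the two quantities realizes the minimum, i.e. according to the normal-form dichotomy in Definition \ref{nfG}. In the second case, where $Sl(\P)(\beta(x)) = Sl(f_n(z)W^n)(\beta(x)) < \ord(\R_{\G,\beta})(\beta(x))$ and $\win_{\beta(x)}(f_n(z))$ is not an $n$-th power, one wants to argue that the auxiliary part $\beta^*(\R_{\G,\beta})$ does not interfere: because its order is strictly larger, the sequences of transformations used in the proof of Theorem \ref{thmfn} (the blow-ups in Stage A and the codimension-2 blow-ups of Stage B applied to $f_nW^n$) remain permissible for the full algebra $\G$ for exactly the same range of lengths $\ell \le \ell_N$, since the $\R_{\G,\beta}$-contribution only makes the singular locus smaller and keeps $\ord \ge 1$ longer than the $f_n$-part does. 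Hence the same limit $\lim_{N\to\infty}\ell_N/N = q-1$ detects $q = Sl(f_n(z)W^n)(\beta(x)) = Sl(\P)(\beta(x))$, and this whole construction is phrased purely in terms of local sequences and their singular loci, i.e. the weak equivalence class (with the non-closed point case handled exactly as in Remark \ref{513}). For the first case, $Sl(\P)(\beta(x)) = \ord(\R_{\G,\beta})(\beta(x))$: here I would use that the elimination algebra is itself, up to weak equivalence, intrinsic — by Proposition \ref{elim} and the results of \cite{BV3}, $\R_{\G,\beta}$ is independent of the choice of transversal $\beta$ up to the relevant equivalence, and $\ord(\R_{\G,\beta})(\beta(x))$ can be read off by a parallel (in fact simpler, since it lives already in $V^{(d-1)}$) sequence of transformations, again expressible via local sequences of $\G$.

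The step I expect to be the main obstacle is controlling the interaction between the $f_n$-part and the elimination-algebra part $\beta^*(\R_{\G,\beta})$ during the blow-up sequences — that is, verifying that introducing $\beta^*(\R_{\G,\beta})$ into the algebra does not change the critical length $\ell_N$ at which $H_{N+\ell}$ ceases to be (the image of) a component of the singular locus. One must check that the exceptional hypersurface $t=0$ leaves $\Sing$ because of the $f_n$-part precisely when $Sl < 1$, and that the $\R_{\G,\beta}$-contribution, having strictly larger order, has not already forced it out earlier, nor keeps it in longer; this requires tracking the orders of the transforms of the generators of $\R_{\G,\beta}$ under the same monoidal transformations and comparing with the exponents of $t$ computed in Stage B. Once this bookkeeping is in place, the conclusion follows by transcribing the argument of Theorem \ref{thmfn} verbatim, together with the compatibility of elimination with transformations from Proposition \ref{elim} (4) and \ref{cuad}. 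I would also remark, as in Corollary after Theorem \ref{thmfn}, that independence of the chosen projection $\beta$ is then automatic, since the weak equivalence class of $\G$ makes no reference to $\beta$.
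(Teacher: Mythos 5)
Your proposal follows the paper's own proof essentially verbatim: reduce via the presentation to the algebra generated by $f_n$, its $\beta$-relative derivatives and $\beta^*(\R_{\G,\beta})$, split into the two normal-form cases, and run the Stage A/Stage B blow-up sequences of Theorem \ref{thmfn}, observing that the critical length $\ell_N$ of permissible codimension-two centers is governed by $\min\{Sl(f_nW^n)(\beta(x)),\ord(\R_{\G,\beta})(\beta(x))\}$ so that $\lim_N \ell_N/N=q-1$ recovers $Sl(\P)(\beta(x))$. The bookkeeping you flag as the main obstacle (tracking the transforms of $\R_{\G,\beta}$ alongside the exponents of $t$) is exactly the step the paper also dispatches with a ``check that'', using the transformation law for elimination algebras from Proposition \ref{elim} (4); your plan is correct and matches the paper's argument.
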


\begin{proof}
Recall that $Sl(\P)(\x)=\min\{Sl(f_n(z)W^n)(\x),\ord(\R_{\G,\beta})(\x)\}$.

\vspace{0.2cm}

\noindent $\bullet$ Firstly assume that $q:=Sl(f_n(z)W^n)(\x)<\ord(\R_{\G,\beta})(\x)$. In this case, we will argue as in Theorem \ref{thmfn} to prove our claim.  In our coming discussion, we make us of the fact that the transformation law of  $\R_{\G,\beta}$ is that defines for Rees algebras (see (\ref{trRA})).

Fix the same notation as in the proof of Theorem \ref{thmfn} and consider $N$-monoidal transformations at $p_0,p_1,\dots,p_{N-1}$ followed by $\ell_N$ transformations at codimension $2$, where
$$\ell_N=\left\{
\begin{array}{ll}
\lfloor N(q-1)- 1\rfloor & \hbox{if }\ Nq\not\in\mathbb{Z},\\
 N(q-1)- 1 & \hbox{if }\ Nq\in\mathbb{Z}.
\end{array}
\right.$$
We claim that the highest possible number of transformations defined by blowing-up centers of codimension $2$ (in the sense of Stage B)) is exactly this number $\ell_N$ (which is completely characterized by $N$ and $q$). 

The sequence of $N+\ell$ monoidal transformations
$$
\xymatrix@R=0pc@C=2pc{
\G' & \G_1 & & \G_N & \G_{N+1} & & \G_{N+\ell}\\
V^{(d)}\times \mathbb{A}^1 & V^{(d+1)}_1\ar[l]_{\pi_{p_0}}  &\dots\ar[l]_{\pi_{p_1}}  & V^{(d+1)}_N\ar[l]_{\pi_{p_{N-1}}} & V^{(d+1)}_{N+1}\ar[l] & \dots\ar[l] & V^{(d+1)}_{N+\ell}\ar[l]
}$$
(see (\ref{seqintro1})) gives rise to
$$
\xymatrix@R=0pc@C=2pc{
V^{(d-1)}\times \mathbb{A}^1 & V^{(d)}_1\ar[l]  &\dots\ar[l]  & V^{(d)}_N\ar[l] & V^{(d)}_{N+1}\ar[l] & \dots\ar[l] & V^{(d)}_{N+\ell}\ar[l]\\
\R'_{\G,\beta} & (\R_{\G,\beta})_1 & & (\R_{\G,\beta})_{N} & (\R_{\G,\beta})_{N+1} & & (\R_{\G,\beta})_{N+\ell}
}$$ 
together with morphisms $\widetilde{\beta}_i: V^{(d+1)}_i\longrightarrow V^{(d)}_i$ (see (\ref{eqbetai})). In addition, each $(\R_{\G,\beta})_i$ is also defined as the elimination algebra of $\G_i$ via $\beta_i$ (see (3) in (\ref{conmut})). 

Denote by $H$ the exceptional hypersurface introduced  by the last of these transformations. Set $(\R_{\G,\beta})'=(\R_{\G,\beta})_{N+\ell}$. Check that the slope at the generic point of $H$, say $\xi_H$, is 
$$Sl(\P')(\xi_H)=Sl(f'_n(z)W^n)(\xi_H)<\ord((\R_{\G,\beta})')(\xi_H),$$
where $\P'=\P'(\beta',z',f_n'(z'))$ is a presentation given by the transform at $V^{(d+1)}_{N+\ell}$.

In the case $Nq\not \in \mathbb{Z}$, then $0<Sl(\P')(\xi_H)=Sl(f'_n(z')W^n)(\xi_H)<1$, so $H$ is not a component of $\beta'(\Sing(\G'))$ (see Proposition \ref{propHG} and Remark \ref{rmkZ} (3)).

If we assume now that $Nq\in \mathbb{Z}$, then $Sl(\P')(\xi_H)=Sl(f'_n(z')W^n)=0$. Remark \ref{win0} ensures that $\win_{\xi_H}(f_n'(z'))$ can be identified with the equation defining the fiber  over $\xi_H$, i.e. $f'_n(z')|_H$, which can be naturally identified with $\win_\x(f_n(z))$. By hypothesis it is assumed that $\win_\x(f_n(z))$ is not an $n$-th power, so Proposition \ref{propHG} and Remark \ref{rmkZ} (3) applies here to show that $H$ is not a component of $\beta'(\Sing(\G'))$. 

\vspace{0.2cm}

\noindent $\bullet$ To finish the proof, assume now that $Sl(\P)(\x)=\ord(\R_{\G,\beta})(\x)$. Check that $Sl(\P')(\xi_H)=\ord((\R_{\G,\beta})')(\xi_H)<1$ after the transformations indicated before, and hence $H$ is not a component of $\beta'(\Sing(\G'))$.

The previous discussion shows that the value $Sl(\P)(\beta(x))$ is totally characterized by the weak equivalence class of $\G$ as it was done in (\ref{eqlim}).
\end{proof}

\begin{corollary}\label{corbord}
Let $\G$ be a Rees algebra. Fix a point $x\in\Sing(\G)$, so that $\tau_{\G,x}\geq 1$. Consider two different presentations $\P_1=\P(\beta_1,z_1,f_1(z_1))$ and $\P_2=\P(\beta_2,z_2,f_2(z_2))$ which are in normal form at  $\beta_1(x)$ and $\beta_2(x)$, respectively .Then, 
$$Sl(\P_1)(\beta_1(x))=Sl(\P_2)(\beta_2(x)).$$
\end{corollary}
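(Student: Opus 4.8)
The plan is to obtain the statement as a formal consequence of Theorem \ref{thmqG}, exactly as the corollary to Theorem \ref{thmfn} was deduced in the hypersurface case. The crucial observation is that the object appearing in the conclusion of Theorem \ref{thmqG} — the weak equivalence class of $\G$ in a neighborhood of $x$ — involves only the Rees algebra $\G$ and the point $x$; it makes no reference whatsoever to a transversal projection, to a $\beta$-section, or to the choice of an element $f_nW^n\in\G$ used to build a presentation.

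Concretely, I would argue as follows. Fix $x\in\Sing(\G)$ with $\tau_{\G,x}\geq 1$, and let $\P_1=\P(\beta_1,z_1,f_1(z_1))$, $\P_2=\P(\beta_2,z_2,f_2(z_2))$ be the two given presentations, in normal form at $\beta_1(x)$ and $\beta_2(x)$ respectively. Applying Theorem \ref{thmqG} to $\P_1$, the rational number $Sl(\P_1)(\beta_1(x))$ is completely characterized by the weak equivalence class of $\G$ near $x$. Applying the same theorem, verbatim, to $\P_2$, the number $Sl(\P_2)(\beta_2(x))$ is likewise completely characterized by the weak equivalence class of $\G$ near $x$. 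Since $\G$ is one and the same Rees algebra, restricted to one and the same neighborhood of $x$, the two weak equivalence classes coincide; therefore the two slopes are determined by the same datum and must be equal.

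The only point that deserves a word of care is that $\P_1$ and $\P_2$ are a priori incomparable data: $\beta_1$ and $\beta_2$ need not have the same target $V^{(d-1)}$, and the coefficients of $f_1$ and $f_2$ live in different $(d-1)$-dimensional local rings, so one cannot directly compare $Sl(f_1(z_1)W^n)(\beta_1(x))$ with $Sl(f_2(z_2)W^m)(\beta_2(x))$ by hand. This, however, is precisely the difficulty that Theorem \ref{thmqG} already resolves: its characterization of the slope is phrased purely in terms of sequences of permissible monoidal transformations and pull-backs of $\G$ over $V^{(d)}$ (together with the associated $n$-fold-point strata), and such sequences are intrinsic to the weak equivalence class. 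Hence there is no genuine obstacle beyond Theorem \ref{thmqG}; if $x$ is not a closed point one simply invokes the reduction of Remark \ref{513} (restricting to a smooth open subset of the subvariety with generic point $x$), which is already built into the proof of that theorem.
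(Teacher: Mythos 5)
Your argument is exactly the paper's: the corollary is deduced directly from Theorem \ref{thmqG}, since both slopes are characterized by the same weak equivalence class of $\G$ near $x$, which is independent of the projection and the presentation. The paper states this in one line ("Follows straightforward from Theorem \ref{thmqG}"); your elaboration of why the two a priori incomparable data are forced to agree is a faithful unpacking of that same reasoning.
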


\begin{proof}
Follows straightforward from Theorem \ref{thmqG}.
\end{proof}

The previous discussion leads to the following definition:

\begin{definition}\label{defvord}
Fix a Rees algebra $\G$ and assume that $\tau\geq 1$ along $\Sing(\G)$. A function with rational values, called the  \emph{$d-1$-dimensional H-function}, say
$$\Hord^{(d-1)}(\G)(-):\Sing(\G)\longrightarrow \mathbb{Q}_{\geq 0},$$ 
is defined by setting
\begin{equation}\label{eqdefvord}
\Hord^{(d-1)}(\G)(x):=Sl(\P)(\beta(x))=\min_{1\leq j\leq n}\Big\{\frac{\nu_{\beta(x)}(a_j)}{j},\ord(\R_{\G,\beta})(\beta(x))\Big\},
\end{equation}
at any point $x\in \Sing(\G)$, where $\P=\P(\beta,z,f_n(z)=z^n+a_1z^{n-1}+\dots+a_n)$ is a presentation in normal form at $\beta(x)$.
\end{definition}

Corollary \ref{corbord} (or Theorem \ref{thmqG}) ensures that the previous function is well-defined. Namely, it is intrinsic to $\G$, with independence of $\beta$ and the presentations.

\begin{remark}\

\begin{enumerate}
\item Recall that two Rees algebras $\G$ and $\G'$ with the same integral closure are also weakly equivalent. In particular, $\Sing(\G)=\Sing(\G')$ the H-functions coincide, i.e.,
$$\Hord^{(d-1)}(\G)(x)=\Hord^{(d-1)}(\G')(x)$$
at any point $x\in\Sing(\G)=\Sing(\G')$.

\item Similar statement holds for the Rees algebras $\G$ and $Diff(\G)$ (see \ref{rp23}). In fact, both are weakly equivalent; so at any point $x\in\Sing(\G)=\Sing(Diff(\G))$:
$$\Hord^{(d-1)}(\G)(x)=\Hord^{(d-1)}(Diff(\G))(x).$$
\end{enumerate}
\end{remark}
\end{parrafo}

\section{Simplified presentations and the $d-r$-dimensional H-functions ($\tau\geq r$)}\label{sec7}

\begin{parrafo}

Let $\G$ be a differential Rees  algebra in $V^{(d)}$, as defined in Section \ref{intro}. Fix a closed point $x\in\Sing(\G)$ and assume that $\tau_{\G,x}\geq r$. 
In case $r=1$ a notion of \emph{presentations} was introduced in Definition \ref{pr_locp}, in terms of suitable morphisms $V^{(d)}\longrightarrow V^{(d-1)}$.  Presentations were, in turn, the tool that enabled us to define the H-functions in the $d-1$-dimensional case, namely $\Hord^{(d-1)}(\G)(x)$. In this section, we address the general case $\tau_{\G,x}\geq r$. In \ref{tau2} we initiate the discussion of presentations which will be ultimately defined in terms of smooth morphisms $V^{(d)}\overset{\beta}{\longrightarrow}V^{(d-r)}$. In Theorem \ref{sepvar} it is proved that such presentations can be chosen in a simplified form, called \emph{simplified presentation}.

These lower dimensional H-functions are introduced in Definition \ref{corolvord}. They appear as the most  natural extension of Definition \ref{defvord} to the case $r>1$. The value $\Hord^{(d-r)}(\G)(x)$ is defined, firstly, in terms of simplified presentations, and finally Theorem \ref{theo} (1) proves that this value is an invariant, and hence it is independent of any choice.

These functions will lead to applications in singularity theory, discussed here in \ref{par714}.
\end{parrafo}

\begin{parrafo}\label{tau2}{\bf The case $\tau\geq 2$}.

Let $\G$ be a differential Rees algebra. Fix a closed point $x\in\Sing(\G)$.  
Suppose that $\tau_{\G,x}\geq 2$ and fix a transversal projection $V^{(d)}\overset{\beta}{\longrightarrow} V^{(d-2)}$. We will proceed essentially in two steps. We shall first indicate how to construct a factorization of the form
\begin{equation}\label{triang}
\xymatrix@C=1pc@R=1.5pc{
&\ \  V^{(d)}\ar[ld]_{\beta_1}\ar[dd]^{\beta}\\
V^{(d-1)}\ar[rd]_{\beta_2}\\
&\ \  V^{(d-2)}
}\end{equation}

This diagram will allow us to define a coarse presentation in the setting of (\ref{pol}). A second step will consist on a suitable change of the previous factorization of $\beta$, that will finally lead us to the construction of  a simplified presentation in the sense of (\ref{pol2}).

In this first step, we make use of \cite[Theorem 6.4]{Ben1}, which ensures that $\tau_{\R_{\G,\beta_1},\beta_1(x)}\geq 1$ as $\tau_{\G,x}\geq 2$. This will enable us to apply twice Proposition \ref{pr_local}. The existence of $\beta_1$ and $\beta_2$ together with \ref{pr_local} will provide us with a coarse presentation, so that $\G$ will have the same integral closure as
$$\calo_{V^{(d)}}[h_\ell(z_1)W^\ell,\Delta^{j_1}_{z_1}(h_\ell(z_1))W^{\ell-j_1},g_m(z_2)W^m,\Delta^{j_2}_{z_2}(g_m(z_2))W^{m-j_2}]_{
1\leq j_1\leq \ell-1,\
1\leq j_2\leq m-1}\odot \beta^*(\R_{\G,\beta})$$
where 
\begin{itemize}
\item $h_\ell(z_1)\in\calo_{V^{(d-1)}}[z_1]$,
\item and $g_m(z_2)\in\calo_{V^{(d-2)}}[z_2]$,
\end{itemize}
 are monic polynomials on the transversal sections $z_1$ and $z_2$, respectively. Let us draw attention to the fact that the coefficients of $h_{\ell}(z_1)$ are in $\calo_{V^{(d-1)}}$, whereas we want to define a notion of slope involving polynomials with coefficients in $\calo_{V^{(d-2)}}$.

This is the second step, addressed in next proposition. It is proved that, locally in \'etale topology,  there is a \emph{simplified presentation}; so that $\G$ and
$$\calo_{V^{(d)}}[f'_n(z'_1)W^n,\Delta_{z'_1}^{j_1}(f'_n)W^{n-j_1},g'_m(z'_2)W^m,\Delta_{z'_2}^{j_2}(g'_m)W^{m-j_2}]_{1\leq j_1\leq n-1,\
1\leq j_2\leq m-1}\odot \beta^*(\R_{\G,\beta})$$
have the same integral closure, where
\begin{itemize}
\item $f'_n(z'_1)\in\calo_{V^{(d-2)}}[z'_1]$,
\item and $g'_m(z'_2)\in\calo_{V^{(d-2)}}[z'_2]$,
\end{itemize}
are monic polynomials on sections $z'_1$ and $z'_2$, respectively. Note that  now both are polynomials with coefficients in $\calo_{V^{(d-2)}}$. 
Moreover, this will be done by changing the factorization of  $\beta$ in (\ref{triang}).

\begin{proposition}\label{separa} 
Let $\G$ be a differential Rees algebra
and let  $x\in\Sing(\G)$ be a closed point at which $\tau_{\G,x}\geq 2$. Then at a suitable neighborhood of $x$, a transversal morphism, say $V^{(d)}\overset{\beta}{\longrightarrow} V^{(d-2)}$, can be constructed in such a way that:
\begin{enumerate}
\item[-] there are global sections  $z_1,z_2$, and $\{dz_1,dz_2\}$ is a basis of the module of $\beta$-differentials, say $\Omega_\beta^1$, and
\item[-] there are two elements $f_nW^n,g_mW^m\in\G$ where:
\begin{itemize}
\item[**] $f_n(z_1)=z_1^n+a_1z_1^{n-1}+\dots+a_{n}\in\calo_{V^{(d-2)},\beta(x)}[z_1]$,
\item[*] $g_m(z_2)=z_2^m+b_1z_2^{m-1}+\dots+b_m\in\calo_{V^{(d-2)},\beta(x)}[z_2] $,
\end{itemize}
\end{enumerate}
and $\G$ has the same integral closure as
\begin{equation}\label{eqlocalpresen2}
\calo_{V^{(d)}}[f_n(z_1)W^n,\Delta_{z_1}^{\alpha}(f_n)W^{n-\alpha},g_m(z_2)W^m, \Delta_{z_2}^{\gamma}(g_m)W^{m-\gamma}]_{1\leq \alpha\leq n-1,\ 1\leq \gamma\leq m-1}\odot\beta^*(\R_{\G,\beta}),
\end{equation}
where $\R_{\G,\beta}$ denotes the elimination algebra, and $\Delta^j_{z_i}$ are as in Remark \ref{rmk43}.
In addition, $\R_{\G,\beta}$ is non-zero whenever $\Sing(\G)$ is not  of co-dimension two locally at $x$.
\end{proposition}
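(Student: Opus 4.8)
The plan is to follow the two–step strategy sketched in \ref{tau2}: first build a \emph{coarse} presentation of the shape (\ref{pol}) by composing two one–step eliminations, and then \emph{descend the coefficients} of the first polynomial from $\calo_{V^{(d-1)}}$ to $\calo_{V^{(d-2)}}$, which is exactly the passage from (\ref{pol}) to (\ref{pol2}).

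\textbf{Step 1 (the coarse presentation).} Since $\tau_{\G,x}\ge 2$, after étale restriction I would choose a transversal projection $\beta_1\colon V^{(d)}\to V^{(d-1)}$, i.e.\ with $\ker(d\beta_1)_x\cap\mathcal{L}_{\G,x}=0$. As $\G$ is differential and $\tau_{\G,x}\ge 1$, Proposition~\ref{pr_local} applies and produces a $\beta_1$–section $z_1$ and an element $h_\ell(z_1)W^\ell\in\G$, $h_\ell(z_1)=z_1^\ell+c_1z_1^{\ell-1}+\dots+c_\ell\in\calo_{V^{(d-1)}}[z_1]$, so that $\G$ has the same integral closure as $\calo_{V^{(d)}}[h_\ell(z_1)W^\ell,\Delta_{z_1}^{j}(h_\ell)W^{\ell-j}]_{1\le j\le\ell-1}\odot\beta_1^*(\R_{\G,\beta_1})$. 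By \cite[Theorem~6.4]{Ben1} one has $\tau_{\R_{\G,\beta_1},\beta_1(x)}\ge\tau_{\G,x}-1\ge 1$, and $\R_{\G,\beta_1}$ is again a differential Rees algebra on $V^{(d-1)}$ (\cite{VV4}); hence Proposition~\ref{pr_local} applies once more to $\R_{\G,\beta_1}$ and a transversal $\beta_2\colon V^{(d-1)}\to V^{(d-2)}$, giving a $\beta_2$–section $z_2$ and $g_m(z_2)W^m\in\R_{\G,\beta_1}$ with $g_m(z_2)=z_2^m+b_1z_2^{m-1}+\dots+b_m\in\calo_{V^{(d-2)}}[z_2]$. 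Setting $\beta=\beta_2\circ\beta_1$, using $\beta_1^*(\R_{\G,\beta_1})\subset\G$ (Proposition~\ref{elim}~(1)) and the compatibility of elimination algebras with composition of transversal projections, $\R_{\G,\beta}=\R_{\R_{\G,\beta_1},\beta_2}$ (\cite{BV3}), I would substitute the second presentation into the first to conclude that $\G$ has the same integral closure as
\[
\calo_{V^{(d)}}[h_\ell(z_1)W^\ell,\Delta_{z_1}^{j_1}(h_\ell)W^{\ell-j_1},g_m(z_2)W^m,\Delta_{z_2}^{j_2}(g_m)W^{m-j_2}]\odot\beta^*(\R_{\G,\beta}),
\]
which is the coarse presentation (\ref{pol}); note that $\{dz_1,dz_2\}$ is a basis of $\Omega_\beta^1$ by construction.

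\textbf{Step 2 (descent of the coefficients — the crux).} The only defect is that the $c_i$ lie in $\calo_{V^{(d-1)}}=\calo_{V^{(d-2)}}[z_2]$ (étale–locally) rather than in $\calo_{V^{(d-2)}}$. The natural way to clear the $z_2$–dependence is an elimination/norm relative to $g_m$: set
\[
f_n(z_1):=\mathrm{Res}_{z_2}\bigl(h_\ell(z_1,z_2),g_m(z_2)\bigr)=\prod_{j=1}^{m}h_\ell(z_1,\theta_j)\in\calo_{V^{(d-2)},\beta(x)}[z_1],
\]
where $\theta_1,\dots,\theta_m$ are the roots of $g_m$ in a splitting field; this is monic in $z_1$ of degree $n=\ell m$ with coefficients in $\calo_{V^{(d-2)}}$. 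Since $x\in\Sing(\G)$ forces $\nu_x(c_i)\ge i$ and $\nu_{\beta_1(x)}(b_i)\ge i$, a Newton–polygon estimate yields $\nu(\theta_j)\ge 1$, so each factor $h_\ell(z_1,\theta_j)$ has order exactly $\ell$ and $\ord_x f_n=n$; thus $f_n$ is eligible for a presentation. It then remains to check that replacing $h_\ell$ by $f_n$ preserves integral closure, i.e.\ that
\[
\calo_{V^{(d)}}[f_n(z_1)W^n,\Delta_{z_1}^{j_1}(f_n)W^{n-j_1},g_m(z_2)W^m,\Delta_{z_2}^{j_2}(g_m)W^{m-j_2}]\odot\beta^*(\R_{\G,\beta})
\]
has the same integral closure as the coarse presentation algebra, hence as $\G$. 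This is where the differential structure is used essentially: closure of $\G$ under the operators $\Delta_{z_1}^{a}\Delta_{z_2}^{b}$, together with $g_mW^m$ and its $\Delta_{z_2}$–derivatives, should show that the $z_2$–content of each $c_i$ is already captured inside the integral closure of $\G$, so that $f_nW^n$ is integral over the coarse algebra and, conversely, $h_\ell W^\ell$ is recovered from $f_n,g_m$ and their derivatives up to integral closure. I expect this integral–dependence verification — the genuine \emph{separation of variables} turning (\ref{pol}) into (\ref{pol2}) — to be the main obstacle; it is the $\tau\ge 2$ instance of Theorem~\ref{sepvar}, and carrying it out may force a readjustment of the section $z_1\rightsquigarrow z_1'$ and of the factorization $\beta=\beta_2'\circ\beta_1'$ in (\ref{triang}).

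\textbf{Step 3 (wrap–up).} Transversality of $\beta$ at $x$ holds since $\ker(d\beta)_x\cap\mathcal{L}_{\G,x}=0$ (possible because $\tau_{\G,x}\ge 2$ means $\mathcal{L}_{\G,x}$ has codimension $\ge 2$), and transversality is an open condition, so $\beta$ is transversal to $\G$ along $\Sing(\G)$; being an absolute differential Rees algebra, $\G$ is $\beta$–differential. Finally, $\R_{\G,\beta}\neq 0$ unless $\Sing(\G)$ has codimension two at $x$, because if $\R_{\G,\beta}=0$ the simplified presentation (\ref{pol2}) forces $\Sing(\G)=\{z_1=z_2=0\}$ locally, of codimension two — the exact analogue of the last assertion in Proposition~\ref{pr_local}.
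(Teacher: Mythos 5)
Your Step~1 reproduces the paper's first stage faithfully: two successive applications of Proposition~\ref{pr_local} through a factorization $\beta=\beta_2\circ\beta_1$ as in (\ref{triang}), using \cite[Theorem 6.4]{Ben1} to get $\tau_{\R_{\G,\beta_1},\beta_1(x)}\geq 1$. The gap is in Step~2, which is the actual content of the proposition. You define $f_n(z_1)$ as the norm/resultant $\mathrm{Res}_{z_2}\bigl(h_\ell(z_1,z_2),g_m(z_2)\bigr)$ and then assert that the resulting algebra ``should'' have the same integral closure as the coarse one, explicitly deferring the verification (``I expect this integral--dependence verification \dots to be the main obstacle''). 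That verification is precisely what has to be proved; without it nothing is established. In particular it is not clear that $h_\ell W^\ell$ and its $\Delta_{z_1}$-derivatives are integral over the algebra generated by $f_n$, $g_m$, their relative derivatives and $\beta^*(\R_{\G,\beta})$: the product $\prod_j h_\ell(z_1,\theta_j)$ retains only symmetric-function data of the coefficients $c_i(z_2)$ evaluated at the roots of $g_m$, and you give no argument that the discarded $z_2$-content of the $c_i$ is recovered from the remaining generators.

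The paper avoids this computation altogether. Instead of modifying $h_\ell$, it discards it and constructs a \emph{second} factorization of $\beta$ through a different intermediate scheme $V_2^{(d-1)}$ (an \'etale neighborhood of $V^{(d-2)}\times\mathbb{A}^1$ in which the retained coordinate is $z_1$ and the section of the new projection $\delta_1$ is $z_2$; see diagram (\ref{rombo})). The elimination algebra $\mathcal{H}=\R_{\G,\delta_1}$ then lives on $V_2^{(d-1)}$, so any monic polynomial in $z_1$ inside $\mathcal{H}$ automatically has coefficients in $\calo_{V^{(d-2)}}$; since $\tau_{\mathcal{H},\delta_1(x)}\geq 1$ (again by \cite[Theorem 6.4]{Ben1}), Proposition~\ref{pr_local} applied to $\mathcal{H}$ and $\delta_2$ produces the required $f_n(z_1)$, and the ``same integral closure'' statement is inherited from the one-variable case already proved, rather than requiring a new integral-dependence argument. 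To salvage your resultant route you would essentially have to show that $\mathrm{Res}_{z_2}(h_\ell,g_m)W^{\ell m}$ is an order-$\ell m$ element of $\R_{\G,\delta_1}$ at $\delta_1(x)$ and then invoke Proposition~\ref{pr_local} for $\mathcal{H}$ --- at which point you have reconstructed the paper's argument.
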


\begin{definition}\label{pr_locp2}({Simplified Presentations for $\tau\geq 2$}).
Let the setting be as above.
We say that 
\begin{equation}\label{eqld2}
s\P(\beta,  z_1,z_2,  f_{n}(z_1), g_{m}(z_2))
\end{equation}
defines a \emph{simplified presentation} of $\G$.
\end{definition}

\noindent{\it Idea of the proof of Proposition \ref{separa}}:
Fix $x\in\Sing(\G)$, so that $\tau_{\G,x}\geq2$. We will first indicate how to produce a diagram as (\ref{triang}). Once this task is achieved, we will construct a scheme $V^{(d-1)}_2$, and two smooth morphisms $V^{(d)}\overset{\delta_1}{\longrightarrow}V_2^{(d-1)}$ and $V_2^{(d-1)}\overset{\delta_2}{\longrightarrow}V^{(d-2)}$, so that the following diagram commutes:
\begin{equation}\label{rombo}
\xymatrix@R=1.7pc@C=1pc{
&\ \ V^{(d)}\ar[dl]_{\beta_1}\ar[dr]^{\delta_1}\ar[dd]^{\beta} &\\
V_1^{(d-1)}\ar[dr]_{\beta_2} &  & V_2^{(d-1)}\ar[dl]^{\delta_2}\\
&\ \ V^{(d-2)}
}
\end{equation}

\noindent{\bf STEP 1}. We shall first construct a morphism $\beta: V^{(d)}\longrightarrow V^{(d-2)}$ together with a diagram (\ref{triang}), and with the following conditions:

\begin{enumerate}
\item[-] There is an element $h_\ell W^{\ell}\in\G$ so that $h_\ell$ is a monic polynomial on $z$ of degree $\ell$, say
$$h_\ell(z)=z^{\ell}+c_1z^{\ell-1}+\dots+c_\ell\in\calo_{V_1^{(d-1)},\beta_1(x)}[z],$$
where $z$ is a global section, so that $\{dz\}$ is a basis of the module of $\beta_1$-differentials.

Here $\R_{\G,\beta_1}(\subset \calo_{V_1^{(d-1)}}[W])$ denotes the elimination algebra corresponding to $\beta_1$. This is a simple algebra at $\beta_1(x)\in\Sing(\R_{\G,\beta_1})$. In fact, $\tau_{\R_{\G,\beta_1},\beta_1(x)}\geq 1$, since $\tau_{\G,x}\geq 2$.

\item[*] There is an element $g_m W^{m}\in\R_{\G,\beta_1}$ so that $g_m$ is a monic polynomial of degree $m$, say
$$g_m(z_2)=z_2^{m}+b_1z_2^{m-1}+\dots+b_m\in\calo_{V^{(d-2)},\beta(x)}[z_2]$$
where $z_2$ is a global section and $\{dz_2\}$ is a basis of the module of $\beta_2$-differentials. Hence $\{dz,dz_2\}$ is a basis of the module of $\beta$-differentials. 

\end{enumerate}

\noindent{\bf STEP 2}. Here we will address the construction of a smooth scheme $V_2^{(d-1)}$, together with morphisms $V^{(d)}\overset{\delta_1}{\longrightarrow}V_2^{(d-1)}$ and $V_2^{(d-1)}\overset{\delta_2}{\longrightarrow}V^{(d-2)}$, so as to complete diagram (\ref{rombo}). The functions $f_n(z_1)$ and $g_m(z_2)$, with the conditions specified in Proposition \ref{separa}, will arise from the construction of the right-hand side in (\ref{rombo}).

Step 1 provides us with an element $g_m(z_2)W^m\in\R_{\G,\beta_1}$. Via the natural inclusion $\R_{\G,\beta_1}\subset \G$ (see Proposition \ref{elim} (1)), the element $g_m(z_2)W^m\in\G$. The smooth scheme $V_2^{(d-1)}$ and the smooth morphism $\delta_1: V^{(d)}\longrightarrow V^{(d-1)}_2$ will be constructed with the conditions that
$g_m(z_2)\in\calo_{V_2^{(d-1)}}[z_2]$, and that $z_2$ defines a section for $\delta_1$. Moreover $\{dz_2\}$ is a basis for the module of $\delta_1$-differentials. 

As $\tau_{\G,x}\geq 2$, we know that $\tau_{\R_{\G,\delta_1},\delta_1(x)}\geq 1$ (\cite[Theorem 6.4]{Ben1}). This provides:

\begin{enumerate}
\item[**] an element $f_nW^n\in\R_{\G,\delta_1}$ which is a monic polynomial, i.e.,
$$f_n(z_1)=z_1^{n}+a_1z_1^{n_1-1}+\dots+a_n\in\calo_{V^{(d-2)}}[z_1].$$
where again $z_1$ is a global section so that $\{dz_1\}$ is a basis of $\Omega_{\delta_2}^1$; and hence $\{dz_1,dz_2\}$  defines a basis of  $\Omega_{\beta}^1$.

\end{enumerate}

Finally check that $f_n(z_1)$ and $g_m(z_2)$ fulfill the condition of the Proposition.

\begin{proof} Here we construct (\ref{rombo}) with the previously required conditions.
\'Etale topology will be used throughout this proof. Fix a smooth scheme $V$ and suppose given a scheme $W$ and a smooth morphism
$$
\xymatrix@R=2.5pc{
V\ar[d]^{\gamma}\\
W
}$$
This setting is preserved in \'etale topology when an \'etale map $\xymatrix{W'\ar[r]^{e} & W}$ is considered. In fact, a commutative diagram arises by taking fiber products:
$$
\xymatrix@R=2.5pc@C=3pc{
V'\ar[r]^{e'}\ar[d]_{\gamma'} & V\ar[d]^{\gamma}\\
W'\ar[r]^{e} & W
}
$$
where $\xymatrix{V'\ar[r]^{e'} & V}$ is an \'etale map, and $\xymatrix{V'\ar[r]^{\gamma'} & W'}$ is smooth. This says that the construction of a scheme $W$ and a smooth morphism $\gamma$ is preserved in \'etale topology, but \emph{only} when lifting \'etale maps in the previous sense (from down-up).
This will be the key point for the construction of the schemes and morphisms previously mentioned.
Recall our general strategy:
\begin{enumerate}
\item[{\bf STEP 1.}] First construct the left hand side of (\ref{rombo}), namely the smooth schemes $V_1^{(d-1)}$, $V^{(d-2)}$ and the morphisms $\beta_1$ and $\beta_2$ with the required conditions. 

\item[{\bf STEP 2.}] Once  the previous data is fixed, complete the diagram (\ref{rombo}) (the right hand side) in such a way that the polynomials $f_n(z_1)$ and $g_m(z_2)$ can be chosen as in Proposition \ref{separa}.
\end{enumerate}

\vspace{0.3cm}

\noindent {\bf 1)} By assumption $\tau_{\G,x}\geq 2(\geq 1)$, so one can find a regular system of parameters $\{x_1,\dots,x_{d}\}$ at $\calo_{V^{(d)},x}$, and an element $h_\ell W^{\ell}\in\G$ so that $h_\ell |_{x_1=\dots=x_{d-1}=0}=u\cdot x_{d}^{\ell}$ for some unit $u\in\calo_{V^{(d)}}/\id{x_1,\dots,x_{d-1}}$.

Consider the smooth morphism 
$$
\xymatrix@C=0pc@R=2.5pc{
V^{(d)}\ar[d]\\
\mathbb{A}^{(d-1)}_k &\!\!\! =\Spec(k[X_1,\dots,X_{d-1}])
}
$$
defined by $X_i\mapsto x_i$ for $i=1,\dots,d-1$. Let $(B,N)$ be the henselization of the local ring $k[X_1,\dots,X_{d-1}]_{\id{X_1,\dots,X_{d-1}}}$. This defines $\Spec(B)\longrightarrow \mathbb{A}^{(d-1)}_k$. Up to multiplication by a unit, the element $h_\ell$ is a monic polynomial of degree $\ell$, i.e.,
$$h_\ell(z)=z^{\ell}+c_1z^{\ell-1}+\dots+c_\ell\in B[z].$$
Now replace $\mathbb{A}_k^{(d-1)}$ by a suitable \'etale neighborhood $V^{(d-1)}_1$ where all the coefficients $c_i$ are global sections. Define $\beta_1$ by taking the fiber product. We abuse the notation and set $\beta_1: V^{(d)}\longrightarrow V_1^{(d-1)}$.

Let $\R_{\G,\beta_1}$ denote the elimination algebra with respect to $\beta_1$. Since $\tau_{\G,x}\geq 2$, then again $\tau_{\R_{\G,\beta_1},\beta_1(x)}\geq 1$ and we repeat the previous argument to define a scheme, say $V^{(d-2)}$, together with a smooth morphism $\beta_2$, so that, a given element $g_mW^m\in \R_{\G,\beta_1}$ can be expressed as
$$g_m(z_2)=z_2^m+b_1z_2^{m-1}+\dots+b_m\in\calo_{V^{(d-2)}}[z_2].$$
This construction might force us to replace the previous scheme $V_1^{(d-1)}$ by an \'etale neighborhood, in particular, $V^{(d)}$ is replaced by an \'etale neighborhood. Set $\beta=\beta_2\circ\beta_1$. 

\vspace{0.2cm}

\noindent {\bf 2)} Fix a regular system of parameters $\{x_1,\dots,x_{d-2}\}$ at $\calo_{V^{(d-2)},\beta(x)}$. It extends to $\{x_1,\dots,x_{d-2},z,z_2\}$ which is a regular system of parameters at $\calo_{V^{(d)},x}$. Set $\mathbb{A}^1=\Spec(k[T])$. We now define a smooth morphism:
$$\xymatrix@R=2.5pc{
V^{(d)}\ar[d]^{\delta'_1}\\
V^{(d-2)}\times \mathbb{A}^1
}$$
with the condition that $pr_1\circ\delta'_1$ yields $\beta=\beta_2\circ\beta_1$; here $pr_1$ is the projection in the first coordinate. If  in addition $T\mapsto z$, then $\delta'_1$ is uniquely defined. Note that $\calo_{V^{(d-2)}}\subset \calo_{V^{(d-2)}\times\mathbb{A}^1}$ and hence $g_m(z_2)\in\calo_{V^{(d-2)}\times \mathbb{A}^1}[z_2]$. Moreover, check that $z_2$ is a global section so that $\{dz_2\}$ defines a basis of $\Omega_{\delta'_1}^{1}$.

Let $\mathcal{H}:=\R_{\G,\delta'_1}\subset\calo_{V^{(d-2)}\times \mathbb{A}^1}[W]$ denote the elimination algebra with respect to $\delta'_1$. Again, since $\tau_{\G,x}\geq 2$, then $\tau_{\mathcal{H},\delta'_1(x)}\geq 1$. The same arguments used before ensures that there are:
\begin{itemize}
\item[-]  an \'etale neighborhood of $V^{(d-2)}$, say $V'^{(d-2)}$, 
\item[-] a smooth morphism $V_2^{(d-1)}\overset{\delta_2}{\longrightarrow}V'^{(d-2)}$ (here $V_2^{(d-1)}$ is an \'etale neighborhood of $V^{(d-2)}\times \mathbb{A}^1$), and 
\item[-] an element $f_nW^n\in\mathcal{H}\ (\subset \G)$ which can be expressed as
$$**\ \ \ \ f_n(z_1)=z_1^{n}+a_1z_1^{n_1-1}+\dots+a_n\in\calo_{V'^{(d-2)}}[z_1],$$
with the required properties.
\end{itemize}

This settles the construction of diagram (\ref{rombo}), and the two polynomials $f_n(z_1)$ and $g_m(z_2)$ fulfill the conditions of Proposition \ref{separa}.
\end{proof}

\begin{definition}\label{sl2}
Fix a Rees algebra $\G$ so that $\tau_{\G,x}\geq 2$ at any closed point $x\in\Sing(\G)$, and a simplified presentation, say $s\P=s\P(\beta,  z_1,z_2,  f_{n_1}(z_1),f_{n_2}(z_2))$, as in (\ref{eqld2}). The \emph{slope of $\G$ relative to $s\P$ at a point $y\in V^{(d-2)}$} is defined as
$$Sl(s\P)(y):=\!\!\!\!\min_{
\xymatrix@R=0pc@C=0pc{
\scriptstyle 1\leq j_1\leq n_1\\
\scriptstyle 1\leq j_2\leq n_2}
}\!\!\!\! \Big\{\frac{\nu_y(a_{j_1})}{j_1},\frac{\nu_y(b_{j_2})}{j_2},\ord(\R_{\G,\beta})(y)\Big\}.$$
Here $\nu_y$ denotes the order at the local regular ring $\calo_{V^{(d-2)},y}$ and $\ord\R_{\G,\beta}$ is the order function the Rees algebra $\R_{\G,\beta}$ as defined in (\ref{ordG}).
\end{definition}

\begin{definition}\label{nf2} Let $\G$ be a Rees algebra, so that $\tau_{\G,x}\geq 2$ for all $x\in\Sing(\G)$. A simplified presentation $s\P=s\P(\beta,  z_1,z_2,  f_{n_1}(z_1),f_{n_2}(z_2))$ is said to be in \emph{normal form at $y\in V^{(d-2)}$} if one of the following conditions hold:

\begin{itemize}
\item Either $Sl(s\P)(y)=\ord(\R_{\G,\beta})(y)$,
\item or $Sl(s\P)(y)= Sl(f_{n_1}(z_1)W^{n_1})(y)<\ord(\R_{\G,\beta})(y)$ and $\win_{y}(f_{n_1}(z_1))$ is not an $n_1$-th power,
\item or  $Sl(s\P)(y)= Sl(f_{n_2}(z_2)W^{n_2})(y)<\ord(\R_{\G,\beta})(y)$ and $\win_{y}(f_{n_2}(z_2))$ is not an $n_2$-th power.
\end{itemize}
\end{definition}

It will be shown in Theorem \ref{theo} that the rational number $Sl(s\P)(y)$ is entirely determined by the weak equivalence class of $\G$, whenever $s\P$ is in normal form at $y\in\beta(\Sing(\G))$. This will lead to the definition of an H-function along points of $\Sing(\G)$.
\end{parrafo}

\begin{parrafo}{\bf The case $\tau\geq e$}.\label{taur}

We address here the case $\tau\geq e$, now for arbitrary $e$. It parallels the previous results for $e=2$.

\begin{theorem}\label{sepvar}
Let $\G$ be a Rees algebra so that $\tau_{\G,x}\geq e$ at a closed point $x\in\Sing(\G)$. Then, at  a suitable \'etale neighborhood of $x$, a transversal morphism, say $V^{(d)}\overset{\beta}{\longrightarrow}V^{(d-e)}$, can be defined together with:
\begin{itemize}
\item[-] global functions $z_1,\dots,z_e$, and $\{dz_1,\dots,dz_e\}$ is a basis of $\Omega^1_\beta$ (module of $\beta$-relative differentials),  
\item[-] integers $n_1,\dots,n_e\in\mathbb{Z}_{>0}$, and
\item[-] elements $f_{n_1}W^{n_1},\dots,f_{n_e}W^{n_e}\in\G$, where
\end{itemize}
\begin{equation}\label{polsep}
\begin{array}{l}
f_{n_1}(z_1)= z_1^{n_1}+a_1^{(1)}z_1^{n-1}+\dots+a_{n_1}^{(1)}\in \calo_{V^{(d-e)}}[z_1],\\
 \ \ \ \vdots\\
f_{n_e}(z_e)=z_r^{n_e}+a_1^{(e)}z_r^{n_e-1}+\dots+a_{n_e}^{(e)}\in \calo_{V^{(d-e)}}[z_e],
\end{array}
\end{equation}
(for some $a_i^{(j)}$ global functions on $V^{(d-e)}$). Moreover, the previous data can be defined with the condition that $\G$ has the same integral closure as 
\begin{equation} \label{eqdpl2} 
\calo_{V^{(d)}}[f_{n_i}(z_i)W^{n_i},\Delta_{z_i}^{j_i}(f_{n_i}(z_i))W^{n_i-j_i}]_{1\leq j_i\leq n_i-1, \ i=1,\dots,e}\odot\beta^*(\R_{\G,\beta}).
\end{equation}
Here, $\R_{\G,\beta}\subset\calo_{V^{(d-e)}}[W]$ is the elimination algebra defined in terms of $\beta$ as in Proposition \ref{elim}, and $\Delta^j_{z_i}$ are as in Remark \ref{rmk43}.
\end{theorem}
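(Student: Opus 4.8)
The plan is to argue by induction on $e$, following the structure of the proof of Proposition \ref{separa} (the case $e=2$), which together with Proposition \ref{pr_local} (the case $e=1$, where the coefficients already lie in dimension $d-1=d-e$) serves as the base. Replacing $\G$ by $Diff(\G)$ if needed — this changes neither $\Sing(\G)$ nor the integral-closure data relevant here — we may assume $\G$ is a differential Rees algebra, so that each elimination algebra produced below is again a relative differential Rees algebra and Proposition \ref{pr_local} applies to it.

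\emph{Step 1 (coarse presentation).} Choose a transversal codimension-one projection $\beta_1\colon V^{(d)}\to V^{(d-1)}$ and, by Proposition \ref{pr_local}, a $\beta_1$-section $z_1$ and a monic $f^{\circ}_{n_1}(z_1)W^{n_1}\in\G$ of degree equal to its order, presenting $\G$ as in (\ref{eqdpl}) relative to $\beta_1$ and $\R_{\G,\beta_1}$; here $f^{\circ}_{n_1}$ has coefficients in $\calo_{V^{(d-1)}}$ only. By \cite[Theorem 6.4]{Ben1}, $\tau_{\R_{\G,\beta_1},\beta_1(x)}\geq e-1$, so the inductive hypothesis applied to $\R_{\G,\beta_1}$ over $V^{(d-1)}$ yields a transversal $\beta'\colon V^{(d-1)}\to V^{(d-e)}$, sections $z_2,\dots,z_e$ with $\{dz_2,\dots,dz_e\}$ a basis of $\Omega^1_{\beta'}$, and monic polynomials $f_{n_2}(z_2),\dots,f_{n_e}(z_e)\in\R_{\G,\beta_1}$ of the shape (\ref{polsep}) with coefficients in $\calo_{V^{(d-e)}}$, presenting $\R_{\G,\beta_1}$. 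Set $\beta:=\beta'\circ\beta_1$; transitivity of the elimination construction gives $\R_{\R_{\G,\beta_1},\beta'}=\R_{\G,\beta}$, and by Proposition \ref{elim}(1) the $f_{n_j}W^{n_j}$ lie in $\R_{\G,\beta_1}\subset\G$. Pulling the presentation of $\R_{\G,\beta_1}$ back along $\beta_1$ and combining it with the presentation of $\G$ relative to $\beta_1$, one obtains a presentation of $\G$ in which $f_{n_2},\dots,f_{n_e}$ are already in the desired simplified form but $f^{\circ}_{n_1}$ still has coefficients only in $\calo_{V^{(d-1)}}$.

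\emph{Step 2 (simplifying the last polynomial).} This is the heart of the argument, and is the same ``rhombus'' construction as Step 2 of Proposition \ref{separa}, now with the $e-1$ sections $z_2,\dots,z_e$ in place of one. In an \'etale neighborhood of $x$ one builds a smooth scheme $V_2^{(d-e+1)}$, \'etale over $V^{(d-e)}\times\mathbb{A}^{e-1}$ and lying over $V^{(d-e)}$, and a smooth $\delta_1\colon V^{(d)}\to V_2^{(d-e+1)}$ of relative dimension $e-1$ for which $z_2,\dots,z_e$ are sections (so $\{dz_2,\dots,dz_e\}$ is a basis of $\Omega^1_{\delta_1}$), arranged so that $\delta_2\circ\delta_1=\beta$ with $\delta_2\colon V_2^{(d-e+1)}\to V^{(d-e)}$ the structure morphism, and so that $\delta_1$ is transversal to $\G$ at $x$. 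Since $f_{n_2},\dots,f_{n_e}$ involve only these sections and have coefficients in $\calo_{V^{(d-e)}}\subset\calo_{V_2^{(d-e+1)}}$, factoring $\delta_1$ into codimension-one steps and applying Proposition \ref{pr_local} presents $\G$ through $f_{n_2},\dots,f_{n_e}$ and $\delta_1^*(\mathcal{H})$, where $\mathcal{H}:=\R_{\G,\delta_1}$. Now $\tau_{\mathcal{H},\delta_1(x)}\geq 1$ by \cite[Theorem 6.4]{Ben1}, so Proposition \ref{pr_local} applied to $\mathcal{H}$ along the codimension-one morphism $\delta_2$ produces a $\delta_2$-section $z_1$ and a monic $f_{n_1}(z_1)W^{n_1}\in\mathcal{H}\subset\G$ with coefficients in $\calo_{V^{(d-e)}}$, together with the identity $\R_{\mathcal{H},\delta_2}=\R_{\R_{\G,\delta_1},\delta_2}=\R_{\G,\beta}$. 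Substituting this back gives exactly the presentation (\ref{eqdpl2}); the basis $\{dz_1\}$ of $\Omega^1_{\delta_2}$ and the basis $\{dz_2,\dots,dz_e\}$ of $\Omega^1_{\delta_1}$ assemble into a basis $\{dz_1,\dots,dz_e\}$ of $\Omega^1_\beta$, and the non-vanishing of $\R_{\G,\beta}$ when $\Sing(\G)$ is not of codimension $e$ at $x$ is inherited from Proposition \ref{pr_local}.

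\emph{Main obstacle.} The delicate part is entirely in Step 2: one must verify that the auxiliary projection $\delta_1$ — dictated only by the requirement of carrying the $e-1$ prescribed sections $z_2,\dots,z_e$ — is genuinely transversal to $\G$ at $x$ (using that $z_2,\dots,z_e$ were transversal sections for $\R_{\G,\beta_1}$, so their differentials meet $\mathcal{L}_{\G,x}$ trivially), that its elimination algebra fits into the transitivity relation $\R_{\R_{\G,\delta_1},\delta_2}=\R_{\G,\beta}$, and that the polynomials $f_{n_2},\dots,f_{n_e}$ inherited from Step 1 (built with $\beta_1$) and the new polynomial $f_{n_1}$ (built with the \emph{different} morphism $\delta_1$) combine into the single presentation (\ref{eqdpl2}) with no loss of information. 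The rest — the repeated passage to \'etale neighborhoods, the compatibility of pull-backs of relative differential operators along $\beta_1$ and $\delta_1$, and checking the order and monicity hypotheses of Proposition \ref{pr_local} at each codimension-one step — is routine bookkeeping.
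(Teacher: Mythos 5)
Your proof is correct and follows essentially the same route as the paper, which reduces Theorem \ref{sepvar} to an iteration of the two-step construction (coarse presentation, then the rhombus re-projection) given for Proposition \ref{separa}; your inductive packaging---applying the $e-1$ case to $\R_{\G,\beta_1}$ and then performing a single rhombus to bring the coefficients of the one remaining polynomial down to $\calo_{V^{(d-e)}}$---is a clean way of organizing that iteration. One slip to correct: the auxiliary scheme $V_2^{(d-e+1)}$ must be \'etale over $V^{(d-e)}\times\mathbb{A}^{1}$ (not $\mathbb{A}^{e-1}$), since $\delta_1$ has relative dimension $e-1$ and $\delta_2$ relative dimension $1$.
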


\begin{proof}
The proof follows straightforward from that of Proposition \ref{separa}.
\end{proof}

\begin{definition}\label{pr_locpr}{\rm (Simplified presentations)}.
Let the setting be as in Theorem \ref{sepvar}. The data 
\begin{equation}\label{eqldr}
s\P \big(\beta,  z_1,\dots z_e,  f_{n_1}(z_1),\dots,f_{n_e}(z_e)\big)
\end{equation}
which fulfills the conditions of Theorem \ref{sepvar} is said to be a \emph{simplified presentation}.
\end{definition}

The following Definitions extend those in \ref{sl2} and \ref{nf2}.
\begin{definition}\label{slr}
Let $\G$ be a differential Rees algebra $\G$ so that $\tau_{\G,x}\geq e$ at a closed point $x\in\Sing(\G)$. Fix, at a neighborhood of $x$,  a simplified presentation, say $s\P=s\P(\beta,  z_1,\dots, z_e,  f_{n_1}(z_1),\dots, f_{n_e}(z_e))$, as in (\ref{eqldr}). The \emph{slope of $\G$ relative to $s\P$ at a point $y\in V^{(d-e)}$} is the rational number defined as
\begin{equation}\label{eqslgen}
Sl(s\P)(y):=\!\!\!\!\min_{
\xymatrix@R=0pc@C=0pc{
\scriptstyle 1\leq j_i\leq n_i\\
\scriptstyle 1\leq i\leq e}
}\!\!\!\! \Big\{\frac{\nu_y(a^{(i)}_{j_i})}{j_i},\ord(\R_{\G,\beta})(y)\Big\}.
\end{equation}
\end{definition}

\begin{definition}\label{rnormal} Let $\G$ be a differential Rees algebra in the conditions of the previous definition. A simplified presentation $s\P=s\P(\beta,  z_i,  f_{n_i}(z_i))_{1\leq i\leq e}$ is said to be in \emph{normal form at $y\in V^{(d-e)}$} if one of the following conditions holds:

\begin{itemize}
\item Either $Sl(s\P)(y)=\ord(\R_{\G,\beta})(y)$,
\item or for some index $1\leq i\leq e$, $Sl(s\P)(y)= Sl(f_{n_i}(z_i)W^{n_i})(y)<\ord(\R_{\G,\beta})(y)$ and $\win_{y}(f_{n_i}(z_i))$ is not an $n_i$-th power. 
\end{itemize}
\end{definition}

The next theorem will show that given a simplified presentation $s\P$ in normal form at $y\in\beta(\Sing(\G))$, the rational value $Sl(s\P)(y)$ is an invariant.

\begin{theorem}\label{theo}
Let $\G$ be a $\beta$-differential Rees algebra (e.g. a differential Rees algebra) with the property that $\tau_{\G,x}\geq e$ for all $x\in\Sing(\G)$. Fix a point $x\in\Sing(\G)$ and assume that there is a simplified presentation $s\P$ which is in normal form at $\beta(x)$ (see Definition \ref{rnormal}). 
\begin{enumerate}
\item The rational number $q=Sl(s\P)(\beta(x))$ in (\ref{eqslgen}) is  entirely determined by the weak equivalence class of $\G$.
\item $\G$ and $\calo_{V^{(d)}}[f_{n_1}W^{n_1},\dots,f_{n_r}W^{n_e}]\odot \beta^{*}(\R_{\G,\beta})$ are weakly equivalent. 
\end{enumerate}
\end{theorem}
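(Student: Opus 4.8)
The plan is to reduce Theorem~\ref{theo} to the case $e=1$, which is Theorem~\ref{thmqG}, by exploiting the inductive structure built into the simplified presentation. The key observation is that a simplified presentation $s\P(\beta,z_1,\dots,z_e,f_{n_1}(z_1),\dots,f_{n_e}(z_e))$ presents $\G$, up to integral closure, as the $\odot$-product of $e$ one-dimensional ``pieces'' $\calo_{V^{(d)}}[f_{n_i}(z_i)W^{n_i},\Delta_{z_i}^{j_i}(f_{n_i})W^{n_i-j_i}]$ together with $\beta^*(\R_{\G,\beta})$, and that each $f_{n_i}$ has coefficients already in $\calo_{V^{(d-e)}}$. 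I would first prove part~(2): the algebra $\calo_{V^{(d)}}[f_{n_1}W^{n_1},\dots,f_{n_e}W^{n_e}]\odot\beta^*(\R_{\G,\beta})$ has the same integral closure as the differential algebra in \eqref{eqdpl2}, because applying the $\beta$-relative differential operators to a monic polynomial in a single $z_i$ with coefficients in $\calo_{V^{(d-e)}}$ only produces the $\Delta_{z_i}^{j_i}(f_{n_i})$; hence by Theorem~\ref{rmkweq}(1) the two are weakly equivalent, and by Theorem~\ref{rmkweq}(2) both are weakly equivalent to $\G$ (which has the same integral closure as its differential saturation). This gives (2), and also tells us we may compute everything with the simpler generators $f_{n_i}W^{n_i}$.

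For part~(1), I would mimic the two-stage blow-up argument of Theorem~\ref{thmfn}/Theorem~\ref{thmqG}. After the normalization afforded by the normal-form hypothesis, one has $\nu_{\beta(x)}(a^{(i)}_{j_i})\geq j_i$ and $\ord(\R_{\G,\beta})(\beta(x))\geq 1$, so $q=Sl(s\P)(\beta(x))\geq 1$. As in Stage~A, I pass to $V^{(d)}\times\mathbb{A}^1$, blow up the point $N$ times along the strict transform of $x\times\mathbb{A}^1$, and then (Stage~B) perform the maximal possible string of codimension-two blow-ups with center $\langle z_i, t\rangle$-type ideals — but now one must track all $e$ families of coefficients simultaneously, together with the behaviour of $\R_{\G,\beta}$ under these transformations (its transformation law is the Rees-algebra law, and $(\R_{\G,\beta})_j = \R_{\G_j,\beta_j}$ by Proposition~\ref{elim}(4)). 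The number of admissible codimension-two blow-ups is governed by
$$\ell\leq\min_{i,j_i}\Bigl\{N\bigl(\tfrac{\nu_{\beta(x)}(a^{(i)}_{j_i})}{j_i}-1\bigr)\Bigr\}\ \text{ and }\ \ell\leq N\bigl(\ord(\R_{\G,\beta})(\beta(x))-1\bigr),$$
i.e. by $\ell_N=\lfloor N(q-1)-1\rfloor$ (with the integral adjustment when $Nq\in\mathbb{Z}$). The crucial point is that at step $\ell_N$ the exceptional hypersurface $H$ fails to lie in $\beta'(\Sing(\G'))$: either $0<Sl(s\P')(\xi_H)<1$, or the slope drops to $0$ and $\win_{\xi_H}$ is identified with $\win_{\beta(x)}(f_{n_i})$, which by the normal-form hypothesis is not an $n_i$-th power — so Proposition~\ref{propHG} (the $\tau\geq 1$ version, applied to the relevant coordinate) shows $H$ is not a component. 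Hence $q$ is recovered from $\lim_{N\to\infty}\ell_N/N = q-1$, a quantity manifestly intrinsic to the weak equivalence class.

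**Where the difficulty lies.** The routine part is the bookkeeping of exponents of $t$ under the iterated blow-ups, which is essentially identical to Theorem~\ref{thmfn}. The genuine obstacle is the \emph{simultaneity}: one must verify that the minimum defining $\ell_N$ is actually attained, i.e. that there is no ``hidden'' change of variables mixing the different $z_i$ (or mixing $z_i$ with the variables of $V^{(d-e)}$) that would let the codimension-two sequence continue longer than $\ell_N$. This is exactly what the simplified form buys us: because each $f_{n_i}$ is a monic polynomial in one variable $z_i$ alone with coefficients in $\calo_{V^{(d-e)}}$, the normal-form condition is stable under the relevant transformations and the analysis of each coordinate decouples — so the obstruction from any single non-normal-form direction already blocks the process, and a single change $z_i\mapsto z_i+\alpha$ in one coordinate cannot be hidden by the others. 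Making this decoupling precise — in effect, checking that the elimination algebra $\R_{\G,\beta}$ together with the $e$ one-variable pieces jointly control $\Sing(\G)$ via Proposition~\ref{elim}(2) and that each piece behaves under the sequence exactly as in the $e=1$ case — is the heart of the argument, and it is where the weak-equivalence formalism of Section~\ref{sec3} and the functoriality in Proposition~\ref{elim}(4) must be invoked carefully.
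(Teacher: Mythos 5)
Your strategy coincides with the paper's: part (2) is proved exactly as in the paper (Theorem \ref{sepvar} gives the integral-closure identification with the algebra (\ref{eqdpl2}), and Theorem \ref{rmkweq} (1) and (2) convert integral closure and differential saturation into weak equivalence), and part (1) is meant to be the two-stage blow-up count of Theorems \ref{thmfn} and \ref{thmqG}, recovering $q$ from $\lim_{N\to\infty}\ell_N/N=q-1$.

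There is, however, a concrete error in your Stage B, and it sits precisely at the one point where the case $e>1$ differs from $e=1$. You propose ``codimension-two blow-ups with center $\id{z_i,t}$-type ideals.'' For $e\geq 2$ such centers are not permissible: since $\tau_{\G,x}\geq e$, locally at $x$ one has $\Sing(\G)\subset V(z_1,\dots,z_e)$ (in normal form, the unique point of $\Sing(\G)$ in each fiber of $\beta$ is the point where all the $z_j$ vanish, by Zariski's multiplicity lemma applied to each monic $f_{n_j}(z_j)$), and this persists for the transforms on $V^{(d)}\times\mathbb{A}^1$. The subscheme $V(\id{z_i,t})$ is not contained in $V(z_1,\dots,z_e)$ because the other sections $z_j$, $j\neq i$, do not vanish on it; hence it does not lie in the singular locus and cannot be blown up. The correct Stage B center is $V(\id{z_1,\dots,z_e,t})$, of codimension $e+1$ --- this is exactly the modification the paper's proof prescribes. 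With this center, permissibility simultaneously imposes $N(r^{(i)}_{j_i}-j_i)-\ell j_i\geq j_i$ for \emph{all} $i$ and $j_i$ together with the corresponding condition on the transform of $\R_{\G,\beta}$, and it is only this simultaneous constraint that yields the bound $\ell_N=\lfloor N(q-1)-1\rfloor$ with $q=Sl(s\P)(\beta(x))$ that you assert; with separate codimension-two centers the count you wrote down is not the one your construction would produce (and the construction itself is undefined). Your concluding discussion of ``decoupling'' is then not needed as an extra verification: the single codimension-$(e+1)$ center couples the $e$ directions by fiat, and the terminal obstruction (slope in $(0,1)$, or slope $0$ with $\win_{\xi_H}(f_{n_i})$ not an $n_i$-th power for the index attaining the minimum) works exactly as in Theorem \ref{thmqG}. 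Once the center is corrected, the rest of your argument --- the exponent bookkeeping, the use of the normal-form hypothesis, and the limit formula --- goes through as in the paper.
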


\begin{proof}
\ \ \ (1) The proof is similar to that of Theorem \ref{thmqG}, considering now, in Stage B,  blow-ups at centers of codimension $e+1$ instead of codimension $2$.

(2) Theorem \ref{sepvar} ensures that $s\P$ can be chosen so that $\G$ has the same integral closure as
$$
\G_1=\calo_{V^{(d)}}[f_{n_i}(z_i)W^{n_i},\Delta^{j_i}_{z_i}(f_{n_i}(z_i))W^{n_i-j_i}]_{1\leq j_i\leq n_i-1, \ i=1,\dots,e}\odot\beta^*(\R_{\G,\beta}).
$$
Theorem \ref{rmkweq} (1) asserts that $\G$ and $\G_1$ are weakly equivalent. Theorem \ref{rmkweq} (2) ensures that $\G_1$ and $\calo_{V^{(d)}}[f_{n_1}W^{n_1},\dots,f_{n_e}W^{n_e}]\odot \beta^{*}(\R_{\G,\beta})$ are weakly equivalent.
\end{proof}

\begin{definition}\label{corolvord}
The $d-e$-dimensional H-function is defined, in the setting of the previous theorem, say
$$\Hord^{(d-e)}(\G):\Sing(\G)\longrightarrow \mathbb{Q}_{\geq 0}$$ 
by setting:
$$\Hord^{(d-e)}(\G)(x)=Sl(s\P)(\beta(x)),$$
where, $s\P$ is a simplified presentation in normal form at $\beta(x)$.
\end{definition}

Theorem \ref{theo} ensures that this value is well-defined. Namely, that it is independent of the choice of the smooth morphism $\beta$ and of the simplified presentation.
\end{parrafo}

\section{Main properties of H-functions and proof of Theorem \ref{IntroTH}}

\begin{parrafo}{\bf A particular kind of simplified presentations: $p$-presentations}\label{par714}. 

Elimination theory is largely sustained on the presence of monic polynomials, indeed elimination algebras are defined in terms of this monic polynomials, the first example appears in Theorem \ref{sepvar}. Astonishing properties arises when these monic polynomials have as degrees powers of the characteristic. Namely, when $n_i=p^{\ell_i}$ in (\ref{polsep}). In fact, in this case, there is a surprising interplay between the coefficients and the elimination algebra.

Fix a $d$-dimensional scheme $V^{(d)}$  smooth over a perfect field $k$ together with a differential Rees algebra, say $\G$,  over $V^{(d)}$. Fix a transversal projection $V^{(d)}\overset{\beta}{\longrightarrow} V^{(d-1)}$. As $\G$ is a differential algebra, it is also a $\beta$-differential algebra.   In this case, locally at any point $x\in\Sing(\G)$, the $\beta$-differential structure of $\G$ enables us to consider a simplified presentation $s\P$  with  integers of the form:
$n_1=p^{\ell_1}< n_2=p^{\ell_2}< \dots< n_e=p^{\ell_e}$, where $p$ denotes the characteristic of $k$. This particular simplified presentations will be called \emph{$p$-presentations} and denoted by
\begin{equation}\label{psp}
p\P=p\P(\beta,z_i,f_{p^{\ell_i}}(z_i)=z_i^{p^{\ell_i}}+a_1^{(i)}z^{p^{\ell_i}-1}+\dots+a_{p^{\ell_i}})_{1\leq i\leq e}.
\end{equation}
The exponents $\ell_1\leq \ell_2\leq  \dots\leq \ell_e$ are closely related with invariants studied by Hironaka in \cite{Hironaka70}, and also related with other invariants introduced by Kawanoue and Matsuki in \cite{Kaw} and \cite{KM}.

The notion of $p$-presentations were introduced in \cite{BeV1} for the case $e=1$. They were denoted by
$$p\P=p\P(\beta:V^{(d)}\longrightarrow V^{(d-1)},z,f_{p^\ell}(z)=z^{p^\ell}+a_1z^{p^\ell-1}+\dots+a_{p^\ell}).$$
There it is shown that:
\begin{enumerate}
\item given $x\in\Sing(\G)$, the $p$-presentation can be modified into a new $p$-presentation which is in normal form at $\beta(x)$ (see Definition \ref{nfG}),
\item and 
\begin{equation}\label{Hordp1}
\Hord^{(d-1)}(\G)(x)=\min\Big\{\frac{\nu_{\beta(x)}(a_{p^\ell})}{p^\ell},\ord(\R_{\G,\beta})(\beta(x))\Big\},
\end{equation}
where the right hand side is defined in terms of a $p$-presentation $p\P=p\P(\beta,z,f_{p^\ell}(z))$ in normal form at $\beta(x)$ (see \cite[Theorem 4.6. and Corollary A.9]{BeV1}). 
\end{enumerate}

This simplifies the expression in (\ref{eqdefvord}), as only the coefficient $a_{p^\ell}$ appears in this formulation. Surprisingly, the contribution of the intermediate coefficients of $f_{p^\ell}(z)=z^{p^\ell}+a_1z^{p^\ell-1}+\dots+a_{p^\ell}$ is somehow encoded in the elimination algebra $\R_{\G,\beta}$. 

The previous two properties extend easily to the general case  $\tau\geq e$, via Theorem \ref{sepvar} together with presentations as in (\ref{psp}). Namely:
\begin{enumerate}
\item Given $x\in\Sing(\G)$,  the $p$-presentation $p\P$ in  (\ref{psp}) can be modified into another $p$-presentation, which is in normal form at $\beta(x)$. Let us emphasize here that $p$-presentations are particular forms of simplified presentations.

\item Assume now that $p\P$ is in normal form at $\beta(x)$, then
\begin{equation}\label{pHord}
\Hord^{(d-e)}(\G)(x)=\min_{1\leq j\leq e}\Big\{\frac{\nu_{\beta(x)}\big(a_{p^{\ell_j}}^{(j)}\big)}{p^{\ell_j}},\ord(\R_{\G,\beta})(\beta(x))\Big\}.
\end{equation}
\end{enumerate}

\end{parrafo}

\begin{parrafo}{\bf $p$-presentations and monoidal transformations}.

Insofar $\G$ was assumed to be a differential Rees algebra. It is in this context in which Theorem \ref{sepvar} applies, namely, the theorem ensures that simplified presentations exists for these Rees algebras.
Our task now is to discuss the notion of presentations for a fixed sequence of transformations, say (\ref{conmut}). Namely, to attach a simplified presentation to $\G_r$, locally at $x\in\Sing(\G_r)$, in terms of  $\beta_r$. This will be done by successive ``transformations'' of simplified presentations.

Fix a $p$-presentation of $\G$ as in (\ref{psp}). 
In \cite{BeV1} a notion of transformations of a $p$-presentation was studied for the  case $e=1$: Given a $p$-presentation of $\G$ over $V^{(d)}$, say $p\P=p\P(\beta,z,f_{p^\ell})$, a new $p$-presentation, say $p\P_1=p\P_1(\beta_1,z_1,f^{(1)}_{p^\ell})$, is defined in terms of $p\P$ and of the monoidal transformation. This is denoted by
$$\xymatrix@R=-0.2pc@C=3pc{
p\P & p\P_1\\
V^{(d)} & V^{(d)}_1\ar[l]_{\pi_{C}}.
}$$
This result  readily extends  to the case $\tau\geq e$, again thanks to Theorem \ref{sepvar} applied here to simplified presentations in the setting of (\ref{psp}).

Theorem \ref{theo} has now a natural formulation for the $\beta_r$-differential Rees algebra $\G_r$ in (\ref{conmut}). Also Definition \ref{corolvord} can be stated for $\G_r$. Consequently the $d-e$-dimensional H-function of $\G_r$, say
$$\Hord^{(d-e)}(\G_r):\Sing(\G_r)\longrightarrow \mathbb{Q}_{\geq 0},$$
is defined, and again Theorem \ref{theo} ensures that the function is intrinsic.

\end{parrafo}

\begin{parrafo}{\bf On tamed H-functions and the proof of Theorem \ref{IntroTH}}.

Finally we address the proof of Theorem \ref{IntroTH}. Firstly we will show that the numerical conditions in (\ref{eqTHs}) can be easily simplified. Once this point is settled, Theorem \ref{IntroTH} will be reformulated as Theorem \ref{thmsmc}.

Firstly recall the construction of the monomial algebra introduced in (\ref{leqmn}), where a monomial algebra, say
\begin{equation}\label{eqmon}
\m_r W^s=\calo_{V_r^{(d-e)}}[I(H_1)^{h_1}\dots I(H_r)^{h_r}W^s],
\end{equation}
is assigned to any sequence (\ref{conmut}), by setting, for $i=0,\dots,r-1$:
\begin{equation}\label{eqhmon}
\frac{h_{i+1}}{s}=\Hord^{(d-e)}(\G_i)(\xi_{Y_i})-1,
\end{equation}
and where $\xi_{Y_i}$ denotes the generic point of $Y_i$,  the center of the monoidal transformation.

\begin{definition}\label{defsmc}
Let $\G$ be  a Rees algebra so that $\tau_\G\geq e$. Let the setting be as in \ref{cuad}, and assume that (\ref{conmut}) is defined so that the elimination algebra of $\G_r$ is monomial in the sense of (\ref{mon}). The Rees algebra $\G_r$ is said to be in the \emph{strong monomial case} if at any closed point $x\in\Sing(\G_r)$:
\begin{equation}\label{eqsmcD}
\Hord^{(d-e)}(\G_r)(x)=\ord(\m_rW^s)(x).
\end{equation}
\end{definition}

\begin{remark}\label{rmksmc}
Let us remark that both in Theorem \ref{IntroTH}, and in Definition \ref{IntroDef}, the strong monomial case was defined in terms of two possible numerical conditions, namely 
$$
 \Hord^{(d-e)}(\G_r)(x)=\ord(\m_rW^s)(x)\ \ \hbox{ or }\ \ \Hord^{(d-e)}(\G_r)(x)= \Ord^{(d-e)}(\G_r)(x)=\ord((\R_{\G,\beta})_r)(\beta_r(x)),
$$
whereas (\ref{eqsmcD}) neglects the second condition. Some clarification is in order to justify this fact. We have assumed, as we may, that the elimination algebra of $\G_r$ is monomial. Namely, that
\begin{equation}\label{eqma}
(\R_{\G,\beta})_r=\calo_{V_r^{(d-e)}}[I(H_1)^{\alpha_1}\dots I(H_r)^{\alpha_r}W^s].
\end{equation}
We observe first that the monomial algebra $\m_rW^s$ in (\ref{eqmon}) divides $(\R_{\G,\beta})_r$ (i.e., $h_i\leq \alpha_i$ for $i=1,\dots,r$). This fact is guaranteed by (\ref{eqhmon}) and (\ref{pHord}), together with the law of transformations for Rees algebras.

Finally, as $\m_rW^s$ divides $(\R_{\G,\beta})_r$, the condition $\Hord^{(d-e)}(\G_r)(x)= \ord(\R_{\G,\beta})_r)(\beta_r(x))$ ensures that $\m_r W^s=(\R_{\G,\beta})_r$ locally at $x$, and in particular that $ \Hord^{(d-e)}(\G_r)(x)=\ord(\m_rW^s)(x)$.

So both numerical conditions in (\ref{eqTHs}) reduces to the unique condition in (\ref{eqsmcD}).
\end{remark}

\begin{theorem}\label{thmsmc}
Let the setting be as in Definition \ref{defsmc}. Assume that $\G_r$ is in the strong monomial case at any closed point $x\in\Sing(\G_r)$. A combinatorial resolution of the monomial algebra $\m_rW^s$ can be naturally lifted to a resolution of $\G_r$.
\end{theorem}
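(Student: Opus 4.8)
The plan is to reduce the assertion to the case $e=1$ already established in \cite[Theorem 8.13]{BeV1}, by exploiting the elimination map $\beta_r$ and the fact that the strong monomial condition is intrinsic. The key observation is that a $p$-presentation (\ref{psp}) in normal form expresses $\G_r$, up to weak equivalence, in terms of $e$ monic polynomials $f_{p^{\ell_i}}(z_i)$ in \emph{distinct} variables, each with coefficients on $V^{(d-e)}_r$ (Theorem \ref{theo} (2)). Thus locally around a closed point $x\in\Sing(\G_r)$ one may factor $\beta_r$ through intermediate smooth schemes and handle the variables $z_1,\dots,z_e$ one at a time, peeling off one variable per step as in the inductive structure of \ref{taur}. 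The combinatorial resolution of $\m_rW^s$ depends only on the exceptional divisors $H_1,\dots,H_r$ and the exponents $h_i$; these exponents are governed by the H-function via (\ref{eqhmon}), which in turn is computed through (\ref{pHord}) from a single coefficient $a^{(j)}_{p^{\ell_j}}$ of each $f_{p^{\ell_i}}$ together with $\ord((\R_{\G,\beta})_r)$.

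\textbf{Steps.} First, I would fix a closed point $x\in\Sing(\G_r)$ and, using Theorem \ref{sepvar} (with $p$-presentations, \ref{par714}), produce a $p$-presentation $p\P$ of $\G_r$ in normal form at $\beta_r(x)$; by the extended versions of the results of \cite{BeV1} recalled in \ref{par714}, the H-function is then given by (\ref{pHord}). Second, I would invoke the hypothesis $\Hord^{(d-e)}(\G_r)(x)=\ord(\m_rW^s)(x)$ together with the divisibility $\m_rW^s \mid (\R_{\G,\beta})_r$ noted in Remark \ref{rmksmc}; combining this with (\ref{pHord}), the strong monomial condition forces, for each point and each index $j$, that $\nu_{\beta_r(x)}(a^{(j)}_{p^{\ell_j}})/p^{\ell_j}$ is controlled by the monomial exponents — i.e. each coefficient $a^{(j)}_{p^{\ell_j}}$ is, up to the monomial part and units, itself a monomial in the $I(H_i)$. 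Third, since the $e$ polynomials live in separate variables and their coefficients are in dimension $d-e$, I would run the $e=1$ argument of \cite[Theorem 8.13]{BeV1} for each $f_{p^{\ell_i}}(z_i)W^{p^{\ell_i}}$ separately: the combinatorial blow-up sequence resolving $\m_rW^s$, when lifted to $V^{(d)}_r$, drops the order of each $f_{p^{\ell_i}}$ and, because the H-function equals $\ord(\m_rW^s)$, keeps the presentation in the monomial regime at every step (one must check that transformations of $p$-presentations, as in the last \parrafo\ before Theorem \ref{thmsmc}, preserve normal form and the equality of (\ref{eqsmcD}) — this follows from the transformation law for Rees algebras applied to both $\m_rW^s$ and $(\R_{\G,\beta})_r$). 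Fourth, I would assemble these: after the combinatorial sequence, $\Sing$ of the lifted algebra is empty because each of the finitely many generators $f_{p^{\ell_i}}$ has been cleared, using Theorem \ref{theo} (2) to identify $\G_r$ up to weak equivalence with $\calo_{V^{(d)}_r}[f_{p^{\ell_1}}W^{p^{\ell_1}},\dots,f_{p^{\ell_e}}W^{p^{\ell_e}}]\odot\beta_r^*(\R_{\G_r,\beta_r})$ and the intrinsic character of $\Sing$ under local sequences.

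\textbf{The main obstacle} I expect is the compatibility of transformations: one must verify that blowing up a center coming from the combinatorial resolution of $\m_rW^s$ is permissible for $\G_r$ (i.e. the center lies in $\Sing(\G_r)$), that the transformed algebra again admits a $p$-presentation in normal form, and that the strong monomial equality (\ref{eqsmcD}) persists. The delicate point is that the elimination algebra $(\R_{\G,\beta})_r$ may strictly dominate $\m_rW^s$ away from the monomial locus in positive characteristic, so one has to argue — as in Remark \ref{rmksmc} — that precisely the strong monomial hypothesis rules this out along the relevant centers, and that the separation of variables afforded by simplified presentations (Theorem \ref{sepvar}) lets the codimension-$(e{+}1)$ blow-up analysis of Theorem \ref{theo} (1), Stage B, be carried out uniformly. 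Once this stability under transformations is in hand, the conclusion $\Sing(\G_r')=\emptyset$ after the lifted combinatorial sequence is immediate, giving the desired resolution of $\G_r$.
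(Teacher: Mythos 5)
Your overall strategy (pass to a $p$-presentation in normal form, use the divisibility of the coefficients and of $(\R_{\G,\beta})_r$ by $\m_rW^s$, and reduce to the $e=1$ argument of \cite{BeV1}) is the same as the paper's, but two of your intermediate claims do not hold as stated, and they are exactly where the work lies. First, the strong monomial hypothesis together with (\ref{pHord}) does \emph{not} force every coefficient $a^{(i)}_{p^{\ell_i}}$ to be a monomial up to units: divisibility only gives $\nu_{\beta_r(x)}(a^{(i)}_{p^{\ell_i}})/p^{\ell_i}\geq \ord(\m_rW^s)(x)$ for all $i$, and since $\Hord^{(d-e)}$ is a \emph{minimum}, the equality (\ref{eqsmcD}) is attained by (at least) one distinguished index $j$; for that $j$ one gets that $a^{(j)}_{p^{\ell_j}}W^{p^{\ell_j}}$ and $\m_rW^s$ have the same integral closure, while the remaining coefficients may well be $\m_r$ times something of positive order, i.e.\ genuinely non-monomial. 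Consequently your plan to ``run the $e=1$ argument for each $f_{p^{\ell_i}}$ separately'' and to conclude that \emph{all} generators are cleared is both unjustified and unnecessary: the paper's proof instead fixes the single minimizing index $j$, checks (using that $p$-presentations can be put in normal form simultaneously at $x$ and at the generic point $y$ of the center) that the \emph{same} $j$ computes $\Hord^{(d-e)}$ at $x$ and at $y$ and that its role is stable under the transformations, and thereby reduces to the one-polynomial case. Emptiness of the singular locus then follows because one polynomial dropping below its weight suffices; you do not need, and cannot in general get, all of them to drop.

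Second, you do not address the case $\Hord^{(d-e)}(\G_r)(x)=\ord((\R_{\G,\beta})_r)(\beta_r(x))$, where the minimum in (\ref{pHord}) may be attained only by the elimination algebra and by no coefficient. There the mechanism is different: Remark \ref{rmksmc} gives $\m_rW^s=(\R_{\G,\beta})_r$ locally, hence every $a^{(i)}_{j_i}W^{j_i}$ lies in (the integral closure of) $(\R_{\G,\beta})_r\subset\G_r$, and combined with $f_{p^{\ell_i}}(z_i)W^{p^{\ell_i}}\in\G_r$ this yields $z_iW\in\G_r$ up to integral closure, i.e.\ $\{z_1=\dots=z_e=0\}$ is a maximal contact subscheme and one concludes as in characteristic zero. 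Without this case split, and without the single-stable-index argument in the complementary case, the ``stability under transformations'' that you correctly flag as the main obstacle is not actually established by your outline.
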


\begin{proof}
Our proof will strongly rely on the existence of $p$-presentations (see \ref{par714}), say $s\P=s\P(\beta,z_i,f_{p^{\ell_i}}(z_i)=z_i^{p^{\ell_i}}+a_1^{(i)}z^{p^{\ell_i}-1}+\dots+a_{p^{\ell_i}})_{1\leq i\leq e}$, which we may assume, in addition, that are in normal form at $\beta(x)$ ($x\in\Sing(\G_r)$).

The monomial algebra $\m_rW^s$ in (\ref{eqmon}) relates to the $p$-presentation in the following manner:
\begin{itemize}
\item $a_{j_i}^{(i)}W^{j_i}\in \m_rW^s\quad \hbox{for }j_i=1,\dots p^{\ell_i},\ i=1,\dots,e$.
Meaning that the coefficients $a_{j_i}^{(i)}W^{j_i}$ are in the integral closure of $\m_rW^s$. We express this fact by saying that $\m_rW^s$ \emph{divides} the coefficients.

\item $\m_rW^s$ divides the elimination algebra $(\R_{\G,\beta})_r$ in (\ref{eqma}) (see Remark \ref{rmksmc}).
\end{itemize}

Let us address now the numerical conditions defined in terms of the H-functions $\Hord^{(d-e)}$ at points of $\Sing(\G_r)$. Firstly recall the simplification given in (\ref{pHord}) in which the $\Hord$ relies on the elimination algebra or on the constant terms of the $e$ polynomials, say $a_{p^{\ell_i}}^{(i)}$.

(A) Assume that $\Hord^{(d-e)}(\G_r)(x)=\ord((\R_{\G,\beta})_r)(\beta_r(x))$ at  $x\in\Sing(\G_r)$. In this case, Remark \ref{rmksmc}) says that $\m_r W^s=(\R_{\G,\beta})_r$, this, in turn, ensures that all coefficients $a_{j_i}^{(i)}W^{j_i}\in(\R_{\G,\beta})_r$. These conditions, together with the fact that $f_{p^{\ell_i}}(z_i)W^{p^{\ell_i}}\in\G_r$,  say that, up to integral closure, $z_iW\in\G_r$. In particular, this guarantees that $z_1=\dots=z_e=0$ has maximal contact with $\G_r$ at $x$, and hence in an open neighborhood of $x$. Hence, at a neighborhood of $x$ the Theorem can be dealt with as in the case of characteristic zero.

(B) Assume that $\Hord^{(d-e)}(\G_r)(x)<\ord((\R_{\G,\beta})_r)(\beta_r(x))$ at  $x\in\Sing(\G_r)$. Then, there is an index $j$ for which $\Hord^{(d-e)}(\G_r)(x)=
\displaystyle\frac{\nu_{\beta_r(x)}(a_{p^{\ell_j}}^{(j)})}{p^{\ell_j}}\ \big(=\ord(\m_rW^s)(x)\big)$. This equality shows that 
$$a_{p^{\ell_j}}^{(j)}W^{p^{\ell_j}}=\m_rW^s,$$
meaning that both algebras have the same integral closure. This ensures that, up to multiplication by a unit, $a_{p^{\ell_j}}^{(j)}$ can be taken as a monomial.

Fix now a smooth permissible center $C$ with generic point $y$, so that $x\in C$. A property of $p$-presentations is that they can be chosen so as to be in normal form simultaneously at $x$ and $y$ (see \cite[Proposition 5.8]{BeV1}). We now compute $\Hord^{(d-e)}(\G_r)(y)$. We claim that 
$$\Hord^{(d-e)}(\G_r)(y)=\frac{\nu_{\beta_r(y)}(a_{p^{\ell_j}}^{(j)})}{p^{\ell_j}}$$
for the same index $j$ we have taken before. In fact, $\ord(\m_rW^s)(y)=\frac{\nu_{\beta_r(y)}(a_{p^{\ell_j}}^{(j)})}{p^{\ell_j}}$ and since all coefficients and the elimination algebra are divisible by $\m_rW^s$, then 
$$\Hord^{(d-e)}(\G)(y)=\min_{1\leq j\leq e}\Big\{\frac{\nu_{\beta_r(y)}\big(a_{p^{\ell_j}}^{(j)}\big)}{p^{\ell_j}},\ord((\R_{\G,\beta})_r)(\beta_r(x))\Big\}
=\frac{\nu_{\beta_r(y)}(a_{p^{\ell_j}}^{(j)})}{p^{\ell_j}}$$
Summarizing, there is a particular index $j$ with the following properties:
\begin{itemize}
\item $j$ provides the value of the H-function at $x$ and at $y$. Namely,
$$\Hord^{(d-e)}(\G)(x)=\frac{\nu_{\beta_r(x)}(a_{p^{\ell_j}}^{(j)})}{p^{\ell_j}}\quad \hbox{ and }\quad\Hord^{(d-e)}(\G)(y)=\frac{\nu_{\beta_r(y)}(a_{p^{\ell_j}}^{(j)})}{p^{\ell_j}}$$
\item the role of this $j$ is stable under transformations.
\end{itemize}

In this way, Theorem \ref{sepvar} reduces the proof of the theorem to the case of only a unique index $j$. A simple proof of Theorem \ref{thmsmc} under this reduced conditions was given in \cite{BeV1}.
\end{proof}

The embedded resolution of surfaces treated in \cite[Part III]{BeV1}  for the hypersurfaces case,  can now be extended to prove embedded desingularization  of arbitrary $2$-dimensional schemes (over perfect fields). This provides a desingularization of $2$-dimensional schemes \emph{a la Hironaka}, namely by applying successive monoidal transformations along the highest Hilbert-Samuel stratum.
\end{parrafo}


\end{document}